\DeclareMathAlphabet{\mathpzc}{OT1}{pzc}{m}{it}
\DeclareMathOperator{\SO}{SO}
   \newcommand{\N}{\mathbb{N}}
   \newcommand{\Z}{\mathbb{Z}}
   \newcommand{\R}{\mathbb{R}}
\newcommand{\GmodGamma}{G / \Gamma}
\newcommand{\asymneighborhood}{\mathcal{S}_\mu}
\newcommand{\tOmega}{\widetilde{\Omega}}
\newcommand{\Hd}{\mathbb{H}^d}
\newcommand{\cupdot}{\mathbin{\mathaccent\cdot\cup}}
\newcommand{\FHd}{\mathcal{F}\Hd}
\newcommand{\acts}{\;\rotatebox[origin=c]{-90}{$\circlearrowright$}\;}
\newcommand{\PbackslashG}{{P \backslash G}}
\newcommand{\tmu}{\tilde{\mu}}
\newcommand{\injrad}{\operatorname{rad_{\mathrm{inj}}}}
\theoremstyle{plain}
\newtheorem{thm}{Theorem}[section]
\newtheorem*{thm*}{Theorem}
\newtheorem{lemma}[thm]{Lemma}
\newtheorem*{lemma*}{Lemma}
\newtheorem{prop}[thm]{Proposition}
\newtheorem*{claim*}{Claim}
\newtheorem{cor}[thm]{Corollary}
\theoremstyle{definition}
\newtheorem{dfn}{Definition}[section]
\theoremstyle{remark}
\newtheorem*{rem*}{Remark}
\newtheorem{rem}[thm]{Remark}
\newtheorem*{exmp*}{Example}
\numberwithin{equation}{section} 
\newtheoremstyle{TheoremNum}
{\topsep}{\topsep}
{\itshape}        
{}                
{\bfseries}       
{.}               
{ }               
{\thmname{#1}\thmnote{ \bfseries #3}}
\theoremstyle{TheoremNum}
\newtheorem{cornm}{Corollary}
\begin{document}
	
\graphicspath{ {} }

\pagestyle{plain}
\title{On Radon measures invariant under horospherical flows on geometrically infinite quotients}
\author{Or Landesberg and Elon Lindenstrauss}
\thanks{The authors acknowledge support from ERC 2020 grant HomDyn (grant no.~833423) and ISF
	grant 891/15. E.L.'s stay at the IAS during the fall of 2018 was supported in part by NSF grant DMS-1638352}

\begin{abstract}
	We consider a locally finite (Radon) measure on $ \SO^+(d,1)/ \Gamma $ invariant under a horospherical subgroup of $ \SO^+(d,1) $ where $ \Gamma $ is a discrete, but not necessarily geometrically finite, subgroup. 
	We show that whenever the measure does not observe any additional invariance properties then it must be supported on a set of points with geometrically degenerate trajectories under the corresponding contracting $ 1 $-parameter diagonalizable flow (geodesic flow). 
\end{abstract}

\maketitle

\section{Introduction and Statement of Results}

Consider a geometrically infinite discrete subgroup $ \Gamma $ of $ G=\SO^+(d,1) $, the group of orientation preserving isometries of the hyperbolic $ d $-space $\Hd$. We study the locally finite horospherical invariant measures on $ \GmodGamma $.

Classification of invariant measures under horospherical flows on hyperbolic manifolds has a long history beginning with Furstenberg's proof of unique ergodicity of the horocycle flow on compact hyperbolic surfaces \cite{Furstenberg1973}. More generally, finite measures on $G/\Gamma$ which are invariant and ergodic under the horospherical flow are either supported on periodic horospheres bounding a cusp or are equal to the volume measure (which therefore must be finite). This was shown by Dani \cite{Dani1978} and Veech \cite{Veech1977} for $G/\Gamma$ of finite volume; for general quotients this result is a special case of Ratner's measure classification theorem \cite{Ratner1991}. However, when considering $G/\Gamma$ with infinite volume the natural measure classification question is not classifying finite measures but rather that of classifying locally finite, a.k.a.\ Radon, measures. 

When considering Radon measures there is a clear distinction between geometrically finite and geometrically infinite discrete subgroups. The cusps of geometrically finite manifolds are somewhat more complicated than that of finite volume manifolds, as they may have non-maximal rank. We say an orbit of the horospherical flow on $G/\Gamma$ \emph{bounds a cusp} if its projection to the corresponding hyperbolic orbifold $\Hd/\Gamma$ is the boundary of a horoball emanating from one of the cusps of $\Hd/\Gamma$. Equivalently, a horospherical orbit bounds a cusp if its orbit under the one-parameter $\R$-split group contracting the horosphere tends to a parabolic limit point of $\Gamma$ --- see \S\ref{sec:regular cover}. This is the natural generalization of periodic orbits of a horosphere on finite volume quotients of $G$, and somewhat loosely we shall say a measure on $G/\Gamma$ is \emph{non-periodic} if it gives zero measure to the horospheres bounding a cusp.

Geometrically finite manifolds turn out to exhibit a unique recurrent and non-periodic horospherical invariant Radon measure. This result is due to Roblin \cite{Roblin2003} over the unit tangent bundle, extending earlier work by Burger \cite{Burger1990}, and due to Winter \cite{Winter2015} over the full frame bundle.

It was discovered by Babillot and Ledrappier \cite{Babillot-Ledrappier} that there is no such uniqueness phenomenon in the geometrically infinite setting. In particular, Babillot and Ledrappier showed that abelian covers of compact hyperbolic surfaces support an uncountable family of horocycle  invariant ergodic and recurrent Radon measures. Sarig \cite{Sarig2004} showed that the class of measures constructed by Babillot and Ledrappier are the only horocycle invariant Radon measures on abelian covers of compact surfaces.
Later Ledrappier and Sarig \cite{Ledrappier-Sarig} extended the measure classification result to all regular covers of finite volume surfaces (though in this case the classification one gets is somewhat less explicit, similar to the measure classification results we give here; see \S\ref{Subsection_measuredecomp}). Ledrappier \cite{Ledrappier2008} extended these results to the unit tangent bundle of normal covers of compact manifolds of variable negative curvature and arbitrary dimension. 
Oh and Pan \cite{Oh-Pan} recently strengthened Ledrappier's result to address the horospherical flow over the full frame bundle of abelian covers of compact hyperbolic manifolds of arbitrary dimension.

A key step in all the above is obtaining additional invariance properties of the given horospherically invariant measure, specifically proving that in the cases considered by these authors such measures (unless periodic) are quasi-invariant with respect to the geodesic flow. 

Let $ \{a_t\}_{t \in \R} $ denote the 1-parameter $\R$-diagonal group in $G$ and let $ U < G $ be the associated unstable horospherical subgroup. The flow given by $a_t$ on $ \GmodGamma $ projects to the geodesic flow on the unit tangent bundle of $\Hd/\Gamma$. Denote by $ N_G(U) $ the normalizer of $ U $ in $ G $. One consequence of our main result is the following:
\begin{cor}\label{cor_GF_intro}
	Let $ \Gamma_0  $ be a geometrically finite Zariski dense discrete subgroup of $ G $ and let $ \{e\} \neq \Gamma \lhd \Gamma_0 $, i.e. $ \GmodGamma $ is a regular cover of $ \GmodGamma_0 $. Let $ \mu $ be a $ U $-ergodic and invariant Radon measure on $ \GmodGamma $. Then one of the following three possibilities holds:
	\begin{enumerate}[leftmargin=*]
		\item $ \mu $ is supported on a wandering horosphere.
		\item $ \mu $ is supported on a lift to $ \GmodGamma $ of a horosphere in $ \GmodGamma_0 $ bounding a cusp.
		\item $ \mu $ is $ N_G(U) $-quasi-invariant.
	\end{enumerate}
\end{cor}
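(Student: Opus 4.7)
The plan is to derive this corollary directly from the main measure-classification theorem of the paper. The latter asserts that any $U$-ergodic Radon measure on $\GmodGamma$ which is not quasi-invariant under $N_G(U)$ must be supported on the set of points whose backward trajectory under the contracting geodesic flow $\{a_{-t}\}_{t \geq 0}$ is geometrically degenerate. Granting this, the corollary reduces to analyzing geometrically degenerate backward trajectories in a regular cover of a geometrically finite quotient, and showing that they produce either a wandering horosphere or the lift of a cusp-bounding horosphere.

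Assuming alternative (3) fails, we apply the main theorem and then exploit the structure of $\Gamma_0$. The limit set $\Lambda(\Gamma_0)$ of the geometrically finite Zariski dense group $\Gamma_0$ decomposes into the conical (radial) limit set and a finite union of $\Gamma_0$-orbits of parabolic fixed points. Correspondingly, on the base $\GmodGamma_0$ a backward geodesic trajectory either remains recurrent to a compact set (conical backward endpoint) or escapes into a cusp (parabolic backward endpoint). Letting $\pi \colon \GmodGamma \to \GmodGamma_0$ denote the natural projection, for $\mu$-a.e. $x$ the projected trajectory $\pi(a_{-t}x)$ falls into one of the following two cases:
\begin{enumerate}[leftmargin=*]
    \item[(a)] If the projected trajectory escapes into a cusp of $\Hd/\Gamma_0$, then by the characterization of cusp-bounding horospheres recalled in the introduction, the $U$-orbit of $x$ projects to a horosphere in $\GmodGamma_0$ bounding that cusp, yielding alternative (2).
    \item[(b)] If the projected trajectory remains recurrent in a compact set of $\GmodGamma_0$, then the geometric degeneracy observed in $\GmodGamma$ must be caused by the coset-coordinate escaping in the infinite fiber $\Gamma_0/\Gamma$. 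This in turn forces the $U$-orbit of $x$ in $\GmodGamma$ itself to be wandering, yielding alternative (1).
\end{enumerate}
The partition of the support into these two $U$-invariant loci, combined with $U$-ergodicity, then forces $\mu$ to be concentrated on exactly one of the three alternatives.

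The principal obstacle is the geometric content of case (b): showing that a backward geodesic trajectory which is recurrent in the geometrically finite base but degenerate in the cover must arise from a wandering $U$-orbit upstairs. This is the essential bridge between the abstract notion of geometric degeneracy produced by the main theorem and the intrinsic dynamical notion of a wandering horospherical orbit; it presumably rests on transferring conical recurrence in the base to horospherical recurrence in the cover via the regularity of $\GmodGamma \to \GmodGamma_0$, so that failure of horospherical recurrence in $\GmodGamma$ translates into outright wandering. Once this lemma is in hand, the trichotomy follows from a clean bookkeeping argument on the limit points of $\Gamma_0$.
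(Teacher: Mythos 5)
Your proposal goes wrong at the crucial step, case (b), and the case division itself is not exhaustive. In the paper the trichotomy comes from partitioning by the backward endpoint $\pi_-(x)\in\partial\Hd$ according to $\partial\Hd=(\partial\Hd\smallsetminus\Lambda_{\Gamma_0})\cupdot\Lambda^{par}_{\Gamma_0}\cupdot\Lambda^{con}_{\Gamma_0}$ (using $\Lambda_\Gamma=\Lambda_{\Gamma_0}$ for a nontrivial normal subgroup). The wandering-horosphere alternative (1) corresponds to the backward endpoint lying \emph{outside} the limit set, i.e.\ the trajectory escaping through a funnel --- a case your dichotomy ``escapes into a cusp / recurrent in a compact set of $\GmodGamma_0$'' omits entirely. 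More seriously, in your case (b), where the projected trajectory returns to a compact set in $\GmodGamma_0$, the correct conclusion is alternative (3), not alternative (1): recurrence gives $\gamma_n\in\Gamma_0$, $t_n\to\infty$ with $a_{-t_n}g\gamma_n\to g_0$, and \emph{normality} of $\Gamma$ in $\Gamma_0$ yields $a_{-t_n}g\Gamma g^{-1}a_{t_n}=a_{-t_n}g(\gamma_n\Gamma\gamma_n^{-1})g^{-1}a_{t_n}\to g_0\Gamma g_0^{-1}$, so $g_0\Gamma g_0^{-1}\subseteq\asymneighborhood$; since $\Gamma$ is normal in the Zariski dense $\Gamma_0$ and $G$ is simple, $\Gamma$ (hence $g_0\Gamma g_0^{-1}$) is Zariski dense, and \cref{cor_MA-qi} then forces $MA$-quasi-invariance. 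Your claim that fiber escape in $\Gamma_0/\Gamma$ ``forces the $U$-orbit to be wandering'' is false and also irrelevant: $\mathcal{S}_x$ depends only on recurrence of the projection to the base precisely because of normality, regardless of whether the orbit recurs in the cover.

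Relatedly, you use the main theorem as a blanket statement ``not quasi-invariant $\Rightarrow$ supported on degenerate trajectories,'' but the theorem actually says $\alpha(\asymneighborhood)\subseteq H_\mu$, and the way it enters the corollary is exactly through the recurrent (conical) case above, where one exhibits a Zariski dense subgroup inside $\asymneighborhood$. If you run your contrapositive correctly, the conical case must simply be shown to be impossible under the assumption that (3) fails, and the remaining cases (backward endpoint outside $\Lambda_{\Gamma_0}$, respectively a parabolic limit point) give (1), respectively (2), after invoking ergodicity (plus countability of $\Lambda^{par}_{\Gamma_0}$) to concentrate $\mu$ on a single horosphere. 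The essential missing ideas in your write-up are thus the use of normality plus Zariski density of $\Gamma$ to convert base recurrence into $MA$-quasi-invariance, and the funnel case as the actual source of wandering horospheres.
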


Note that this result is new even in the case of $d=2$. 

\medskip

More generally, we aim to understand what conditions on $ \Gamma $ or $ \mu $, a horospherically invariant Radon measure, imply $ N_G(U) $-quasi-invariance.
Inspired by Sarig's results on ``weakly tame'' surfaces \cite{Sarig2010}, we give a general condition for quasi-invariance depending on the geometric ``scenery'' encountered by geodesic rays $ \{ a_{-t}x \}_{t\geq 0} $ for $ \mu $-a.e.~$ x \in \GmodGamma $. 
This relation between the geometric scenery encountered and the invariance properties of $\mu$ is exemplified in the following statement, proven in \S\ref{Subsection_injrad}, which is a special case of \cref{main_theorem} below:
\begin{cor}\label{cor_injrad_intro}
	Let $ \Gamma < G $ be a discrete group with injectivity radius uniformly bounded away from 0. Let $ \mu $ be any $ U $-ergodic and invariant Radon measure on $ \GmodGamma $. Then at least one of the following holds:
	\begin{enumerate}[leftmargin=*]
		\item $ \mu $ is quasi-invariant under some hyperbolic element of $ N_G(U) $.
		\item $ \lim_{t \to \infty} \injrad(a_{-t} x) = \infty $ for $ \mu $-a.e.~$ x $.
	\end{enumerate}
	Where $\injrad (y) $ is the supremal injectivity radius at the point $ y \in \GmodGamma $.
\end{cor}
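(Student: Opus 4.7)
The plan is to deduce \cref{cor_injrad_intro} from the paper's main result \cref{main_theorem}. Applied to the given $U$-ergodic invariant Radon measure $\mu$, the main theorem should furnish a dichotomy: either $\mu$ acquires additional quasi-invariance under some hyperbolic element of $N_G(U)$ -- which is exactly case (1) of the corollary -- or $\mu$-a.e.~trajectory $\{a_{-t}x\}_{t\ge 0}$ is geometrically degenerate in the sense formulated in the main theorem. The only remaining task is to translate ``geometric degeneracy'', under the hypothesis $\inf_y \injrad(y) \ge \delta > 0$, into the statement $\injrad(a_{-t}x) \to \infty$.

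The geometric input I would use is that, under this uniform lower bound, $G/\Gamma$ has no thin part: the sublevel sets $K_R = \{y \in G/\Gamma : \injrad(y) \le R\}$ form an exhaustion of $G/\Gamma$ by compact subsets. Indeed, $\injrad(y) \le R$ is equivalent to the existence of some $\gamma \in \Gamma \setminus \{e\}$ with $d(\gamma \tilde y, \tilde y) \le 2R$ for a lift $\tilde y$, and combined with discreteness of $\Gamma$ and the absence of a thin part this precludes escape to infinity at bounded injectivity radius. Thus, if $\mu$-a.e.~trajectory is geometrically degenerate, then for $\mu$-a.e.~$x$ and every $R > 0$ the orbit $a_{-t}x$ must avoid $K_R$ for all $t$ past some threshold $T(x,R)$, which is precisely $\injrad(a_{-t}x) \to \infty$, i.e.~case (2).

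The principal obstacle I anticipate is the matching step: verifying that the abstract notion of geometric degeneracy in \cref{main_theorem}, which is formulated in the general setting without any injectivity hypothesis and is presumably stated in terms of the local $\Gamma$-decorated scenery near a point, specializes exactly to escape from the $K_R$ under the thick hypothesis. Since $\injrad(y)$ is controlled by the minimal displacement of a lift by nontrivial elements of $\Gamma$ -- the same data governing the scenery -- this translation should be essentially tautological in the presence of a uniform lower bound on $\injrad$, but one must be careful to check that no hidden thin-part assumptions slip into the definition of degeneracy. Once this identification is in hand, the corollary follows immediately.
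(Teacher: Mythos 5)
Your reduction to \cref{main_theorem} is the right starting point, but the geometric step you rely on is false, and it bypasses the place where the hypothesis $\inf_x \injrad(x)>0$ actually does its work. The sets $K_R=\{y\in\GmodGamma : \injrad(y)\le R\}$ are in general \emph{not} compact under a uniform lower bound on the injectivity radius: for a $\Z$-cover of a compact hyperbolic manifold (a central example the corollary is designed to cover), the injectivity radius is bounded above as well as below, so $K_R$ is the whole noncompact space for $R$ large. Consequently ``bounded injectivity radius along the trajectory'' does not force escape from compact sets, and your translation of the failure of quasi-invariance into $\injrad(a_{-t}x)\to\infty$ via an exhaustion argument collapses; indeed, in the $\Z$-cover case conclusion (2) genuinely fails and only (1) holds, which your scheme could not detect.

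The paper's argument uses the lower bound quite differently. First, $x\mapsto\liminf_{t\to\infty}\injrad(a_{-t}x)$ is $U$-invariant and measurable, hence $\mu$-a.s.\ constant; if the constant is not $\infty$, then for $\mu$-a.e.\ $g\Gamma$ there are $R>0$, $t_n\to\infty$ and nontrivial $\gamma_n\in a_{-t_n}g\Gamma g^{-1}a_{t_n}\cap B_{2R}^G$. The hypothesis enters here: it is equivalent to $\Gamma$ being purely hyperbolic with length spectrum bounded below by some $c>0$, and since translation length is conjugation-invariant, every $\gamma_n$ is hyperbolic with $\ell(\gamma_n)\ge c$; hence any accumulation point $\gamma_0\in B_{2R}^G$ lies in $\asymneighborhood$ and is a \emph{nontrivial hyperbolic} element (the lower bound rules out $\gamma_0$ being the identity or parabolic --- exactly the degenerate outcomes your proposal never excludes). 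Then \cref{main_theorem} together with the fact that at least one of $\alpha(\gamma_0),\alpha(\gamma_0^{-1})$ is a nontrivial element of $MA\smallsetminus M$ conjugate to $\gamma_0^{\pm1}$ yields quasi-invariance under a hyperbolic element of $N_G(U)$, i.e.\ case (1). To repair your write-up you would need to replace the compact-exhaustion claim by this extraction of a hyperbolic element of $\asymneighborhood$ from bounded injectivity radius along a subsequence of times, which is the actual content of the proof.
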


\medskip
\noindent
We note that this result is somewhat reminiscent to a result in the topological category by Maucourant and Schapira (\cite[Cor.~3.2]{Maucourant-Schapira}) classifying points in $G/\Gamma$ whose $U$-orbits are dense in the non-wandering set for~$U$.

In this paper we give a sufficient condition for $ N_G(U) $-quasi-invariance  applicable to horospherical flows over the frame bundle of a wide variety of  hyperbolic manifolds of arbitrary dimension.  

While heavily inspired by Ledrappier and Sarig, our proof uses a significantly different approach, one which does not rely on any symbolic representation of the dynamics at hand. In particular, we give a new proof of Sarig's quasi-invariance result for weakly tame surfaces \cite{Sarig2010}, as well as \cite{Ledrappier-Sarig}. Our starting point is an insight that we learned from Ratner's proof of the horocycle measure classification theorem in her expository paper \cite{Ratner1992_SL2(R)} which roughly states that whenever $ a_{-t} x \approx a_{-t} y $ then the respective ergodic averages at $ x $ and $ y $ of scale $ e^t $ satisfy
\[ e^{-t(d-1)}\int_{a_t B_1^U a_{-t}} f(ux) du \approx e^{-t(d-1)}\int_{a_t B_1^U a_{-t}} f(uy) du, \]
where $B_1^U$ denotes a unit ball around the identity $e$ in $U$.
We adapt this argument to suit the infinite measure setting and make crucial use of Hochman's multiparameter ratio ergodic theorem \cite{Hochman2010}. 

\subsection{Statement of the Main Results}

Let $ G=\SO^+(d,1) $ be the group of orientation preserving isometries of hyperbolic $ d $-space, equipped with a right-invariant metric. We shall make use of the following subgroups of $ G $:
\begin{itemize}[leftmargin=*]
	\item $ A=\{a_t\}_{t \in \R} $ --- the Cartan subgroup.
	\item $ U = \{ u : a_{-t}ua_{t} \to e \text{ as } t \to \infty \} $ --- the unstable horospherical subgroup.
	\item $ N $ --- the stable horospherical subgroup.
	\item $ K \cong SO(d) $ --- the maximal compact subgroup.
	\item $ M=Z_K(A) $ --- the centralizer of $ A $ in $ K $.
\end{itemize}

The normalizer of $ U $ in $ G $ is $ N_G(U) = MAU $. Given a discrete subgroup $ \Gamma < G $, the space $ \GmodGamma $ is identified with the frame bundle $ \FHd / \Gamma $. The left action of $ A $ on $ \GmodGamma $ corresponds to the geodesic frame flow on $ \FHd / \Gamma $ and the action of $ M $ corresponds to a rotation of the frames around the direction of the geodesic.

Given a $ U $-ergodic and invariant Radon measure (e.i.r.m.) $ \mu $ on $ \GmodGamma $ denote by $ H_\mu $ the stabilizer in $ MA $ of the measure class of $ \mu $, i.e.
\[ H_\mu = \text{Stab}_{MA}([\mu]) = \{ a' \in MA \ :\ a'.\mu \sim \mu \}. \]
Note that $ H_\mu $ is a closed subgroup of $ MA $.

\begin{dfn}\label{dfn_alpha}
	Given any hyperbolic element $ \gamma \in G $ define 
	\[ \alpha(\gamma) = a' \in MA \] 
	whenever the following two conditions hold:
	\begin{enumerate}
		\item $ u_0 \gamma = n_0 a' u_0 $ for some $ u_0 \in U $ and $ n_0 \in N $
		\item $ \|(a')^{-1} u a'\|_U < \|u\|_U \ \text{ for all } u \in U $
	\end{enumerate}
	where $ \|\cdot\|_U $ is some fixed bi-invariant metric on the group $ U $. Set $ \alpha(\gamma)=e $ whenever such $ a', n_0, u_0 $ do not exist or whenever $ \gamma $ is non-hyperbolic (i.e. elliptic or parabolic). 
\end{dfn}

The map $ \alpha $ is well defined and may be thought of as a map associating to a hyperbolic element $ \gamma $ a specific representative to the action of $ \gamma $ on its axis in the hyperbolic frame bundle $ \FHd $. See \S\ref{Section_reparam_map} for further discussion.

\medskip
We can now state our main theorem:
\begin{thm}\label{main_theorem}
	Let $ \Gamma < G $ be any discrete subgroup. Let $ \mu $ be a $ U $-e.i.r.m.~on $ \GmodGamma $, then 
	\[ \alpha\left(\bigcap_{n=1}^\infty \overline{\bigcup_{t \geq n} a_{-t}g\Gamma g^{-1}a_t}\right) \subseteq H_\mu \;\; \text{ for $ \mu $-a.e.~$ g\Gamma $}. \]
\end{thm}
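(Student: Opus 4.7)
The plan is, for each $\gamma^*$ in the stated intersection, to show $a' := \alpha(\gamma^*) \in H_\mu$ via a Ratner-type comparison of large $U$-ball averages at $x = g\Gamma$ and at $a' x$, combined with Hochman's multiparameter ratio ergodic theorem~\cite{Hochman2010} for the abelian action of $U$ on the Radon measure $\mu$.

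\emph{Setup and geometric rescaling.} Choose $t_k \to \infty$ and $\gamma_k \in \Gamma$ with $\epsilon_k := a_{-t_k} g\gamma_k g^{-1} a_{t_k} \to \gamma^*$, and write $\gamma^* = u_0^{-1} n_0 a' u_0$ as in \cref{dfn_alpha}. Conjugation by $a_{t_k}$ gives
\[ g\gamma_k g^{-1} = u_k^{-1} n_k a' u_k + o(1), \]
with $u_k := a_{t_k} u_0 a_{-t_k} \in U$ of norm $\|u_k\|_U = e^{t_k}\|u_0\|_U \to \infty$ and $n_k := a_{t_k} n_0 a_{-t_k} \in N$ tending to $e$. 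Since $g\gamma_k g^{-1}$ fixes $x$ in $\GmodGamma$, this yields the approximate relation $u_k x \approx n_k\, a'\, (u_k x)$ at the scale $R_k := \|u_k\|_U$.

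\emph{Comparison and ratio ergodic theorem.} For $f, h \in C_c(\GmodGamma)$, set $A_R(y, f) := \frac{1}{|B_R^U|}\int_{B_R^U} f(uy)\, du$. The key comparison is: on the bulk of the ball $B_{R_k}^U$ translated to center $u_k$, one replaces $f(u u_k x)$ by $f(u n_k a' u_k x)$ using the near-identity above, and then changes variables $v = (a')^{-1} u a' \in U$ (with Jacobian $J = |\det \Ad(a')|_U| > 1$, since $(a')^{-1}$ strictly contracts $U$) to re-express this as a $U$-average near $a' u_k x$ of comparable scale. After absorbing Følner-type boundary corrections and the $n_k \to e$ error via uniform continuity of $f$, one obtains
\[ A_{R_k}(x, f) = J \cdot A_{R_k'}(a' x, f) + o(1), \quad R_k' \asymp R_k. \]
Dividing by the analogous identity with $h$ in place of $f$ cancels $J$. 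Hochman's ratio ergodic theorem, applied to the $U$-action on $\mu$ and separately on $a'\mu$, gives pointwise convergence $A_R(\cdot, f)/A_R(\cdot, h) \to \mu(f)/\mu(h)$ at $\mu$-a.e.~$x$ and $\to (a'\mu)(f)/(a'\mu)(h)$ at $(a'\mu)$-a.e.~$y$. Along a $\mu$-conull set of $x$ at which both limits apply (at $x$ and $a' x$), the ratio version of the above identity forces
\[ \frac{\mu(f)}{\mu(h)} = \frac{(a'\mu)(f)}{(a'\mu)(h)} \]
for all admissible pairs, whence $a'\mu \propto \mu$ and $a' \in H_\mu$. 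The upgrade from a single $\gamma^*$ to the stated set-theoretic containment is a countable-density argument: the map $x \mapsto \bigcap_n \overline{\bigcup_{t\geq n} a_{-t} g\Gamma g^{-1} a_t}$ is closed-set-valued and upper semicontinuous, $\alpha$ is Borel on its domain, and $H_\mu$ is closed.

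\emph{Main obstacle.} The delicate part will be making the bulk-plus-error comparison uniform and robust in the Radon (infinite) measure setting. One must control (i) the $o(1)$ from $\epsilon_k \to \gamma^*$ and $n_k \to e$ uniformly over $u \in B_{R_k}^U$, against the modulus of continuity of $f$; (ii) the boundary discrepancy between $B_{R_k}^U$ and its $a'$-conjugate, which is $o(1)$ in Lebesgue measure on $U$ by Følner-ness but must be negligible against $\mu$-mass --- precisely why ratios rather than absolute averages are essential; and (iii) the $M$-component of $a' \in MA$, which rotates $U$ and twists balls while preserving Lebesgue. Absorbing all three into a clean limit identity is where Hochman's multiparameter ratio ergodic theorem is indispensable.
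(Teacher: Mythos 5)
Your outline reproduces, in essence, the paper's \emph{special case} (\cref{thm_special_case}, where $\gamma^*\in MA\smallsetminus M$, so $u_0=e$ and the matching takes place over balls centered at the identity), and the step that fails is exactly the one the paper flags in the remark following that proposition. In the general case the window on which you can match $f(u\,u_kx)$ with a value of $f$ near $a'u_kx$ is a ball centered at $u_k$, whose displacement from the identity is $\|u_k\|_U=e^{t_k}\|u_0\|_U$, i.e.\ of the same order as (and typically much larger than) the radius on which the reparameterization is valid; its symmetric difference with any ball $B^U_{R_k}$ centered at the identity is of full proportion, so it cannot be absorbed as a ``F\o lner-type boundary correction.'' Hochman's theorem controls ratios of averages only over balls centered at the identity at a generic point, and in the Radon setting nothing guarantees that the orbit over a far-away shifted window reflects $\mu$ at all --- there need not be even one point of that window in a prescribed compact set of generic points. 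Moreover the additive identity $A_{R_k}(x,f)=J\cdot A_{R_k'}(a'x,f)+o(1)$ is vacuous when $\mu$ is infinite, since both normalized averages tend to $0$; one needs multiplicative control, which is precisely what the displaced window destroys. A further inaccuracy: writing $g\gamma_kg^{-1}=u_k^{-1}n_ka'u_k+o(1)$ ignores that conjugation by $a_{t_k}$, while contracting the $N$-error and preserving the $MA$-error, expands the $U$-component of $\epsilon_k(\gamma^*)^{-1}$ by $e^{t_k}$; there is no pointwise closeness in the $U$-direction, and only Jacobian bounds together with an inclusion of the image window into itself survive (items (3), (4) and (8) of \cref{lma-Phi}, which also require passing to a power $\gamma^k$).

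The paper's actual proof is built entirely around circumventing this problem, and none of that machinery appears in your proposal. Singularity is witnessed by a function $f_\bot$ (\cref{lma_witness}); one fixes a single generic point $x_0$ and a very large ball $B^U_L$, and defines the set $\tOmega_2$ of orbit points having uniformly good ratio averages over $I^{a_t}$ for all $t\ge T_1$ together with an element of the conjugated stabilizer in the neighborhood $V_0$ at some intermediate time $t\in[T_1,T_2]$. \cref{lma_case1} shows that the reparameterized image $\Psi_{u_0}(D)$ of each such orbit point consists of points with bad ratios, hence avoids $O_{\tOmega_2}$, and a Besicovitch covering argument (\cref{lma_cover}), combined with the Chacon--Ornstein-type boundary estimate from Hochman's proof, shows that these images occupy a definite fraction of $m_U(O_\Omega\cap O_{\tOmega_2})$ inside $O_{\tOmega}\smallsetminus O_{\tOmega_2}$, contradicting the ratio ergodic theorem when the parameters are chosen appropriately. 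Without this (or some substitute mechanism that converts the matching over displaced windows into information about $\mu$), the passage from your comparison to the equality of $\mu$- and $a'.\mu$-ratios, and hence to $a'\in H_\mu$, does not go through.
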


\vspace{4mm}
Given $ x=g\Gamma \in \GmodGamma $ denote
\[ \mathcal{S}_{x} =\bigcap_{n=1}^\infty \overline{\bigcup_{t \geq n} a_{-t}g\Gamma g^{-1}a_t}. \]
Recall that $ a_{-t}g\Gamma g^{-1}a_t $ is the stabilizer of the point $ a_{-t}x $ in $ \GmodGamma $. Hence $ \mathcal{S}_{x} $ may be viewed as the set of accumulation points of elements of $ \Gamma $ as ``seen'' from the viewpoint of the geodesic ray $ \left\{a_{-t}x\right\}_{t\geq 0} $.

Note that the map $ x \mapsto \mathcal{S}_{x} $ is $ U $-invariant, i.e. $ \mathcal{S}_{u.x} =\mathcal{S}_{x} $ for any $ u \in U $. It is also measurable, as a map into the space of closed subsets of $ G $. Hence given an ergodic measure $ \mu $, there exists $ \mathcal{S}_{\mu} \subset G $ satisfying 
\[ \mathcal{S}_{x}=\mathcal{S}_{\mu} \;\; \text{for $ \mu $-a.e.~$ x \in \GmodGamma $.} \] 
Thus an equivalent formulation of \cref{main_theorem}:

\begin{thm*}[Alternative formulation of \cref{main_theorem}]
	Let $ \Gamma < G $ be any discrete subgroup. Let $ \mu $ be a $ U $-e.i.r.m.~on $ \GmodGamma $, then $ \alpha(\asymneighborhood) \subseteq H_\mu $.
	
\end{thm*}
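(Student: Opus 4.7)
The plan is to adapt Ratner's cross-comparison of horospherical ergodic averages to the infinite-measure setting, using Hochman's multiparameter ratio ergodic theorem as the key analytic input. Fix $\gamma \in \asymneighborhood$; if $\alpha(\gamma) = e$ there is nothing to show, so assume $\gamma$ is hyperbolic with Bruhat factorisation $\gamma = u_0^{-1} n_0 a' u_0$, $a' := \alpha(\gamma) \in MA$ contracting on $U$. Unpacking the definition of $\asymneighborhood$, for $\mu$-a.e.\ $x = g\Gamma$ we may extract times $t_n \to \infty$ and elements $\delta_n \in g\Gamma g^{-1}$ (each stabilising $x$) with $\gamma_n := a_{-t_n}\delta_n a_{t_n} \to \gamma$ in $G$. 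Setting $y_n := a_{-t_n}x$, the relation $\gamma_n y_n = y_n$ yields a ``ghost translation'' $\gamma y_n = \varepsilon_n y_n$ with $\varepsilon_n := \gamma\gamma_n^{-1} \to e$, so that $y_n$ and $\gamma y_n$ coalesce in $\GmodGamma$ as $n \to \infty$.

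By Hochman's ratio ergodic theorem for the $\R^{d-1}$-action of $U$ on $(\GmodGamma,\mu)$, along a Besicovitch-regular family of boxes $B_T^U := a_T B_1^U a_{-T}$ and any $f, g \in C_c(\GmodGamma)^+$ with $\int g\,d\mu > 0$,
\[
\lim_{T\to\infty} \frac{\int_{B_T^U} f(uz)\,du}{\int_{B_T^U} g(uz)\,du} \;=\; \frac{\int f\,d\mu}{\int g\,d\mu} \qquad \text{for $\mu$-a.e.\ }z.
\]
The strategy is to apply this to $z = x$ both directly and after replacing $(f, g)$ by $(f \circ L_{a'}, g \circ L_{a'})$, where $(f \circ L_{a'})(z) := f(a' z)$, and to show that along $T = t_n$ the two limits must coincide.

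The core of the argument is a change-of-variables comparison. Substituting $u = a_{t_n}v a_{-t_n}$ (with $du = e^{t_n(d-1)}\,dv$) rewrites $\int_{B_{t_n}^U} f(ux)\,du$ as $e^{t_n(d-1)}\!\int_{B_1^U} f(a_{t_n} v\, y_n)\,dv$. The ghost translation allows replacing $y_n$ by $\gamma y_n$ at a cost of $a_{t_n}(v\varepsilon_n v^{-1})a_{-t_n}$ which, after passing to a subsequence if needed, tends to $e$ uniformly in $v \in B_1^U$: the $N$-component of $\varepsilon_n$ is contracted by $a_{t_n}$, and the $U$-component of $\varepsilon_n$ can be forced to decay faster than $e^{-t_n}$ using the flexibility in the choice of $\delta_n$ provided by the definition of $\asymneighborhood$. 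Applying the Bruhat factorisation $v\gamma = n(v) a(v) v'(v)$ (with $n(u_0) = n_0$, $a(u_0) = a'$, $v'(u_0) = u_0$), commuting $a'$ past $a_{t_n}$, and performing a further substitution $v \mapsto v'(v)$ with Jacobian $\lvert \det\Ad(a')|_{\mathfrak u}\rvert^{-1}$, one obtains
\[
\int_{B_{t_n}^U} f(ux)\,du \;=\; \bigl(1 + o(1)\bigr)\,c\,\int_{B_{t_n}^U} (f \circ L_{a'})(ux)\,du \qquad (n \to \infty),
\]
with a constant $c > 0$ independent of $f$. Dividing by the analogous identity for $g$ and invoking Hochman's theorem yields
\[
\frac{\int f\,d\mu}{\int g\,d\mu} \;=\; \frac{\int f\,d(a'_*\mu)}{\int g\,d(a'_*\mu)}
\]
for every admissible $(f, g)$. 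Since $a'$ normalises $U$, the pushforward $a'_*\mu$ is also $U$-invariant and $U$-ergodic; two equivalent $U$-ergodic Radon measures are necessarily proportional (the Radon--Nikodym derivative is $U$-invariant, hence constant by ergodicity), and the identity above is precisely this proportionality. Therefore $a'_*\mu \sim \mu$, i.e.\ $a' \in H_\mu$, as required.

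The main technical obstacle I expect is the error analysis in the Bruhat step: the factorisation $v\gamma = n(v)a(v)v'(v)$ is transparent only for $v$ in an open neighborhood of $u_0$, so one must rule out a non-negligible contribution from the small Bruhat cell and its vicinity --- this likely requires a covering/tiling argument together with the fact that the small cell is a proper analytic subvariety. A secondary subtlety is the simultaneous selection of $t_n$ and $\delta_n$ good both for the ghost approximation with the correct directional decay rate \emph{and} for Hochman's pointwise convergence along the rescaled family $\{B_T^U\}$; one extracts such a subsequence from the intersection of the two full-measure good sets by a measurable-selection argument applied to the assignment $x \mapsto \asymneighborhood_x$.
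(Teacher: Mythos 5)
Your outline reproduces, essentially, the argument the paper only uses for the special case $\gamma\in MA\smallsetminus M$ (\cref{thm_special_case}), and it breaks in the general case at exactly the point flagged in the remark at the end of \S\ref{Section_Special_Case}. For a general hyperbolic $\gamma\in\asymneighborhood$ the Bruhat data $v\mapsto (n(v),a(v),v'(v))$ is well behaved only near $u_\gamma=(\pi|_U)^{-1}(Pk_\gamma^-)$, the point of $U$ lying over the attracting fixed point: away from it $a(v)=\Phi_{MA}(\gamma,v)$ is not uniformly close to $\alpha(\gamma)$, the Jacobian of $v\mapsto v'(v)=\Phi_U(\gamma,v)$ is not approximately constant, and --- crucially --- $\Phi_U(\gamma,B_1^U)$ is not contained in, nor comparable to, $B_1^U$: the map contracts toward $u_\gamma\neq e$ and has a repelling fixed point in the chart. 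So after your substitution you do not get an integral over $B_{t_n}^U$ but over $a_{t_n}\Phi_U(\gamma,B_1^U)a_{-t_n}$, a region of diameter of order $e^{t_n}$ displaced from the identity by a comparable amount. Hochman's theorem controls averages only over balls centred at the identity; for a Radon (infinite) measure it gives no information about such drifting regions --- one cannot even guarantee a single point of the shifted region lies in a fixed compact set of generic points. Hence the asserted identity $\int_{B_{t_n}^U}f(ux)\,du=(1+o(1))\,c\int_{B_{t_n}^U}(f\circ L_{a'})(ux)\,du$ does not follow, and the obstacle you anticipate (avoiding the small Bruhat cell) is not the real one. A secondary unjustified step: nothing in the definition of $\asymneighborhood$ lets you force the $U$-component of $\varepsilon_n$ to decay faster than $e^{-t_n}$; there is no rate. (That particular issue is avoidable by factorizing $u\gamma_n$ itself, uniformly over a neighbourhood $V_0$ of $\gamma$, rather than passing to the limit --- this is what the paper does.)

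The paper's proof of the general case is organized quite differently precisely to get around this shifted-ball problem: it argues by contradiction, using \cref{lma_witness} to produce a function witnessing singularity of $\ell.\mu$ and $\mu$ for all $\ell$ near $\alpha(\gamma_0)$; it replaces $\gamma$ by a high power $\gamma^k$ (\cref{lma-Phi}) so that a fixed ball $D$ is carried into the basin of attraction and the shifted balls $I\cdot\Phi_U(\gamma^k,v)$ are mapped into themselves by $\Phi_U(\gamma,\cdot)$; and instead of applying the ratio theorem at the displaced location, it works inside one large ball $B_L^U$ of a single generic orbit: the ratio theorem together with a Chacon--Ornstein-type boundary estimate show that almost all of the time spent in $\tOmega$ is spent in the good set $\tOmega_2$, while \cref{lma_case1}--\cref{lma_cover} (the reparameterization maps $\Psi_{u_0}$ plus a Besicovitch covering argument) show that each visit to $\Omega\cap\tOmega_2$ forces a definite proportion of visits to $\tOmega\smallsetminus\tOmega_2$, yielding the contradiction. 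To push your direct comparison through you would need a ratio ergodic theorem along balls whose centres drift at the same scale as their radii, and no such statement is available in the infinite-measure setting.
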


We derive the following condition for $ MA $-quasi-invariance:
\begin{cor}\label{cor_MA-qi}
	Let $ \Gamma $ be any discrete subgroup of $ G $ and let $ \mu $ be any $ U $-e.i.r.m.~on $ \GmodGamma $. If $ \asymneighborhood $ contains a Zariski dense subgroup of $ G $ then $ \mu $ is $ MA $-quasi-invariant.	
\end{cor}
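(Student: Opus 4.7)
The plan is to derive this corollary from \cref{main_theorem} via a density statement for Jordan projections of Zariski dense subgroups. Let $ \Lambda < G $ denote a Zariski dense subgroup contained in $ \asymneighborhood $, as furnished by the hypothesis. By \cref{main_theorem} we have $ \alpha(\Lambda) \subseteq \alpha(\asymneighborhood) \subseteq H_\mu $, and since $ H_\mu $ is a closed subgroup of $ MA $ it contains the closed subgroup generated by $ \alpha(\Lambda) $. Thus it suffices to prove
\[ \overline{\langle \alpha(\Lambda) \rangle} = MA, \]
with closure taken inside $ MA $; granted this, $ H_\mu = MA $ and $ \mu $ is $ MA $-quasi-invariant by definition of $ H_\mu $.

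My next step is to unpack \cref{dfn_alpha} into geometric terms. For a hyperbolic $ \gamma \in G $ the two conditions force $ \alpha(\gamma) = m_\gamma a_{-\ell_\gamma} $, where $ \ell_\gamma > 0 $ is the translation length of $ \gamma $ along its axis and $ m_\gamma \in M $ is the induced holonomy around the associated closed geodesic; the contraction requirement on $ U $ is what selects the ``negative'' Weyl chamber. In other words, $ \alpha $ is a version of the Jordan projection from $ G $ to a fixed Weyl chamber in $ MA $, recording translation length in the $ A $-factor and holonomy in the $ M $-factor.

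The desired density is then an instance of classical results on Jordan projections of Zariski dense subgroups of connected semisimple Lie groups, specialized to the rank-one group $ G = \SO^+(d,1) $. Zariski density of $ \Lambda $ in $ G $ implies that the translation lengths $ \{\ell_\gamma\} $ of hyperbolic $ \gamma \in \Lambda $ generate a dense additive subgroup of $ A \cong \R $; when $ d \geq 3 $, it also implies that the holonomies $ \{m_\gamma\} $ generate a Zariski dense subgroup of $ M \cong SO(d-1) $, which is topologically dense since $ M $ is compact. A Benoist-type argument, combining transverse hyperbolic elements produced by ping-pong with conjugation arguments within the Zariski dense $ \Lambda $, then upgrades these componentwise densities to joint density of $ \alpha(\Lambda) $ inside $ MA $.

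The main obstacle I anticipate is precisely this last joint density step. Density of the $ A $- and $ M $-projections of $ \alpha(\Lambda) $ separately does not yield density in $ M \times A $, since $ \overline{\langle \alpha(\Lambda) \rangle} $ could a priori be a proper closed subgroup of $ MA $ with surjective projection to each factor (for instance the closure of the graph of a homomorphism from a dense cyclic subgroup of $ A $ into $ M $). Ruling out such ``twisted'' closed subgroups is where the Zariski density hypothesis on $ \Lambda $ in $ G $ is used in an essential way, producing hyperbolic elements of $ \Lambda $ whose Jordan projections are jointly independent in the appropriate algebraic sense; the rest of the argument is then a direct application of \cref{main_theorem}.
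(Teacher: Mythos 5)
Your reduction is the right one and matches the paper's: by \cref{main_theorem} one has $\alpha(\asymneighborhood)\subseteq H_\mu$, and since $H_\mu$ is closed in $MA$ it suffices to show $H=\overline{\langle\alpha(\asymneighborhood)\rangle}=MA$; your geometric reading of $\alpha(\gamma)$ as recording translation length together with holonomy is also essentially correct, since $\alpha(\gamma)$ is conjugate to $\gamma$ by an element of $NU$, cf.~\eqref{tech_alpha_conjugacyrelation}. The problem is that the entire content of the corollary lies in the density statement $H=MA$, and your proposal does not prove it. The ``classical results on Jordan projections'' you invoke give exactly the two componentwise facts you list: non-arithmeticity of the length spectrum, i.e.\ density of the $A$-components (Dal'bo, Kim, Benoist), and, with more care, largeness of the holonomy part in $M$; as you yourself observe, these do not exclude a proper closed subgroup of $MA$ that surjects onto both factors. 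Your final assertion --- that Zariski density of $\Lambda$ ``produces hyperbolic elements whose Jordan projections are jointly independent in the appropriate algebraic sense'' --- is a restatement of the needed conclusion rather than an argument, and no precise reference is offered that delivers joint density in $M\times A$. That deferred step is precisely where the work is, so as written this is a genuine gap rather than a complete alternative proof.

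For comparison, the paper closes this gap as follows. A theorem of Guivarc'h and Raugi \cite{Guivarc'h-Raugi} gives that $[M,M]H$ has finite index in $MA$, hence equals $MA$ by connectedness; this already finishes $d=2,3$, where $[M,M]=\{e\}$. For $d\geq 4$ one shows that $\alpha(\asymneighborhood)$ is Zariski dense in $MA$: otherwise it would lie in a proper subvariety $V\subset MA$, and since the morphism $(n,a',u)\mapsto (nu)^{-1}a'(nu)$ is dominant, $\asymneighborhood$ would be contained in a lower-dimensional constructible subset of $G$, contradicting Zariski density of $\Lambda$. Consequently $[H,H]$ is Zariski dense in $[MA,MA]=[M,M]=M$, and since every proper closed subgroup of $M\cong \SO(d-1)$ lies in a proper algebraic subgroup, $[H,H]=M$; combined with $MH=MA$ this yields $H=MA$. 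If you wish to keep your route, you must either reproduce an argument of this kind or cite a precise joint-density result (Guivarc'h--Raugi is the closest, and even it only gives density modulo $[M,M]$), rather than appeal to Jordan-projection results that control the $A$-part alone.
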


As in \cite{Babillot2004,Burger1990,Ledrappier-Sarig,Sarig2010}, such quasi-invariance implies a representation of the above invariant measures using $\Gamma$-conformal measures on the boundary $S^{d-1}$ of $\Hd$: any $ U $-e.i.r.m.~on $ \GmodGamma $ which is $ MA $-quasi-invariant lifts to a measure $ \tmu $ on $ G $ presented in $  S^{d-1}\times M \times A \times U  $ coordinates as: 
\[ d\tmu = e^{\beta t} d\nu dm dt du  \] 
where $ \nu $ is a $ \Gamma $-conformal measure with parameter $ \beta $ on $ S^{d-1} $; see \S\ref{Subsection_measuredecomp}.

\subsection{Structure of the Paper}
The paper is organized as follows --- \cref{Section_Special_Case} presents some of the main ideas of the proof in an illustrative special case. Sections \ref{Section_reparam_map} and \ref{Section_proof_mainthm} contain the proof of \cref{main_theorem}. In \cref{Section_corollaries} we deduce corollaries~\ref{cor_GF_intro}, \ref{cor_injrad_intro} and \ref{cor_MA-qi} from our main result.

\subsection{Acknowledgments}

The authors would like to thank Omri Sarig for helpful suggestions and insights and for his encouragement throughout the process of this study.

This paper is part of the first author's PhD thesis conducted at the Hebrew University of Jerusalem under the guidance of the second author. The first author is deeply grateful to Elon for introducing him to ergodic theory and homogeneous dynamics and for his continuous support and encouragement. The first author would also like to thank Ilya Khayutin and Manuel Luethi for many helpful and enjoyable conversations. The second author would like to thank Michael Larsen for helpful discussions. The second author would also like to thank the IAS for providing ideal working conditions in the fall of 2018. Finally, we would like to thank Hee Oh and an anonymous referee for helpful comments on an earlier version of this paper.

\section{Special case of \cref{main_theorem}}\label{Section_Special_Case}

An important ingredient in the proof of \cref{main_theorem} is the ratio ergodic theorem for non-singular actions of $ \R^n $, proven by Hopf for $ n=1 $ and by Hochman \cite{Hochman2010} for all $ n $:

\begin{thm}[Ratio Ergodic Theorem (Hochman)]\label{thm_ratio}
	Let $ \|\cdot \| $ be any norm on $ \R^n $ and let $ (u_s) $ be a free, non-singular and ergodic $ \R^n $-action on a standard $ \sigma $-finite measure space $ (X,\mu) $. Given any $ f,h \in L^1(X) $ with $ \int h d\mu \neq 0 $:
	\[ \frac{\int_{\|s\|<S} f(u_s x)ds}{\int_{\|s\|<S} h(u_s x)ds}  
	\underset{S\to \infty}{\longrightarrow} \frac{\int f d\mu}{\int h d\mu} \qquad \mu \text{-almost surely.} \]
\end{thm}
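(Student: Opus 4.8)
For $n=1$ the statement is Hopf's classical ratio ergodic theorem, which one gets from the combinatorial (``sunrise'') maximal ergodic lemma made available by the total order on $\R$; the content here is the passage to $n\ge 2$, where no such order exists, and the plan is to follow Hochman's argument. First make the standard reductions: writing $f=f^+-f^-$, $h=h^+-h^-$ and invoking linearity of the limit once it is known to exist and be finite, it suffices to treat $f,h\ge 0$, and after rescaling we may assume $\int h\,d\mu=1$. Freeness of the action guarantees that for $\mu$-a.e.\ $x$ the orbit map $s\mapsto u_s x$ is an injective parametrization of the orbit by $\R^n$, which is where the relevant covering geometry will live; by conservativity and ergodicity the orbital integrals $\int_{\|s\|<S}h(u_s x)\,ds$ increase to $\infty$ for $\mu$-a.e.\ $x$, so the ratio in the statement is eventually well defined. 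The discrete ($\Z^n$) and continuous ($\R^n$) versions are proved by parallel arguments (and are related by a routine suspension), so I will not distinguish them below.

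The core is a weak-type maximal inequality for the ratio maximal function
\[ M_{f,h}(x) \;=\; \sup_{S>0}\; \frac{\int_{\|s\|<S} f(u_s x)\,ds}{\int_{\|s\|<S} h(u_s x)\,ds} \qquad (f,h\ge 0), \]
namely
\[ \int_{\{M_{f,h}>\lambda\}} h\,d\mu \;\le\; \frac{C_n}{\lambda}\int_X f\,d\mu , \]
where $C_n$ is the multiplicity constant of the Besicovitch covering theorem for balls of the fixed norm $\|\cdot\|$ on $\R^n$. I would prove this by transference to the acting group. Fix $x$ in the level set and the orbit parametrization $s\mapsto u_s x$; to each point $t$ of this parametrization lying over the level set attach the norm-ball $B(t)\subset\R^n$ centred at $t$ whose radius is a scale $S$ witnessing $M_{f,h}>\lambda$ at $u_t x$, so that $\int_{B(t)} f(u_s x)\,ds>\lambda\int_{B(t)} h(u_s x)\,ds$. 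Apply the Besicovitch covering theorem to extract from $\{B(t)\}$ a subfamily that still covers the relevant part of the orbit with overlap at most $C_n$, sum the witnessing inequalities over this subfamily to obtain a per-orbit estimate, and then integrate over $X$ and let the truncation radius tend to $\infty$. When $\mu$ is invariant under the action this closes immediately, using Fubini, $u_s$-invariance of $\mu$, and $|B_{S+R}|/|B_S|\to 1$.

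The genuine difficulty — and the heart of Hochman's paper — is the non-singular transference. When $\mu$ is only quasi-invariant, integrating the per-orbit estimate over $X$ produces weights coming from the Radon--Nikodym cocycle $\rho_s=d(u_s)_*\mu/d\mu$, that is orbital integrals $\int_{\|s\|<S}\rho_s\,ds$, which do not cancel as they do under invariance; and the Vitali-type $5r$-covering one might use in their place is unavailable, since it needs a doubling property of $\mu$ that simply fails here. Carrying out a Besicovitch-type covering argument whose output survives this cocycle distortion — choosing the covering and organizing the summation so that the cocycle weights are matched on the two sides — is exactly Hochman's main contribution; a natural first attempt, passing to the measure-preserving Maharam extension, runs into the problem that the downstairs ratio is not directly the ratio of any upstairs ergodic average and needs further work. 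This step also fixes the hypotheses: norm-balls are essential, since the Besicovitch covering theorem holds for balls of a norm with a purely dimensional constant, whereas the ratio theorem is \emph{false} for general F\o lner sequences or general amenable-group actions. Reconciling the Besicovitch covering with the cocycle is the single hardest point of the proof.

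Once the maximal inequality is available, a.e.\ convergence and the value of the limit follow by the classical scheme. The maximal inequality controls the oscillation of the ratio, so the set of pairs $(f,h)$ for which the conclusion holds is closed under $L^1$-approximation (the Banach principle), and it therefore suffices to verify the conclusion on an $L^1$-dense subfamily. Since the coboundary trick of the measure-preserving theory is unavailable here (for a non-singular action $\int (g - g\circ u_{s_0})\,d\mu$ need not vanish), this is carried out by a multiparameter analogue of the Chac\'{o}n--Ornstein filling scheme, which in the ergodic case both yields a.e.\ convergence and identifies the limit as the constant $\int f\,d\mu/\int h\,d\mu$. Finally the sign restrictions on $f$ and $h$ are removed by linearity, giving the statement for all $f,h\in L^1$ with $\int h\,d\mu\ne 0$.
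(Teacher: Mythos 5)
There is no internal proof to compare against here: the paper imports this statement verbatim from Hochman \cite{Hochman2010} (with the $n=1$ case attributed to Hopf) and never proves it, so the only standard your write-up can be held to is whether it constitutes a proof on its own. It does not. Your first two paragraphs correctly set up the reductions and prove the weak-type ratio maximal inequality only in the measure-preserving case, where Fubini and invariance let the Besicovitch covering argument close; but the theorem is about non-singular actions, and in your third paragraph you explicitly concede that the cocycle-compatible covering/transference argument ``is exactly Hochman's main contribution'' without carrying it out. Likewise, the identification of the limit is delegated to ``a multiparameter analogue of the Chac\'on--Ornstein filling scheme'' that is named but not executed. So the two steps that constitute the actual content of the theorem --- the maximal inequality surviving the Radon--Nikodym distortion, and the dense-subfamily/filling-scheme argument that yields convergence and the value $\int f\,d\mu/\int h\,d\mu$ --- are described, not proved. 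As a summary of the architecture of \cite{Hochman2010} your account is accurate (including the correct remark that norm-balls and the Besicovitch property are essential and that the statement fails for general F\o lner sequences), but as a blind proof attempt it has a genuine gap at precisely the point where the difficulty lies.

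A smaller caveat: you invoke conservativity (``by conservativity and ergodicity the orbital integrals increase to $\infty$''), which is not among the stated hypotheses; if you want to use it you should either derive it from the hypotheses in the regime where the theorem has content or note how the dissipative ergodic case is handled. For the purposes of the present paper none of this matters --- the authors use the theorem as a black box, exactly as you ultimately do --- but then the honest form of your argument is a citation, not a proof.
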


\begin{dfn}
	A point $ x \in \GmodGamma $ is called $ \mu $-generic if it satisfies the ratio ergodic theorem for $ \mu $ w.r.t all functions in $ C_c(\GmodGamma) $, i.e. if
	\[ \frac{\int_{\|u\|<S} f(ux) du}{\int_{\|u\|<S} g(ux) du} \underset{S\to \infty}{\longrightarrow} \frac{\int f d\mu}{\int g d\mu} \]
	for all $ f,g \in C_c(\GmodGamma) $ with $ \int g d\mu \neq 0 $.
\end{dfn}
\noindent Note that the separability of $ C_c(\GmodGamma) $ implies that $ \mu $-a.e.~$ x \in \GmodGamma $ is $ \mu $-generic.

\medskip
A particularly illuminating special case of our main result is the case when for $ \mu $-a.e.~$ x $ the geodesic ray $ (a_{-t}x)_{t>0} $ enters arbitrarily small neighborhoods of closed geodesics of some approximate fixed type $ a^\prime \in MA \smallsetminus M $. More precisely, whenever for $ \mu $-a.e.~$ x=g\Gamma $ there exist sequences $ t_n \to \infty $ and $ \gamma_n \in a_{-t_n}g\Gamma g^{-1}a_{t_n} $ satisfying $ \gamma_n \to a^\prime $.

\begin{figure}[t]	
	\begin{tikzpicture}[scale=1.25]
	\draw[thick] (1,5) .. controls (2,4.5) and (4,4.5) .. (5,5);
	\draw[thick] (1,3) .. controls (2,3.5) and (4,3.5) .. (5,3);
	
	\draw[black!40!blue,dashed] (0.1,1) -- (0.5,2);
	\draw[black!40!blue] (0.5,2) .. controls (1.17,3.6) .. (2,4.7);
	\draw[black!40!blue,dashed,opacity=0.5] (2,4.7) .. controls (2.25,4) .. (2.2,3.33);
	\draw[black!40!blue] (2.2,3.33) .. controls (2.4,4) .. (2.6,4.62);
	\draw[black!40!blue,dashed,opacity=0.5] (2.6,4.62) .. controls (2.7,4) .. (2.7,3.34);
	\draw[black!40!blue] (2.7,3.34) .. controls (2.9,4) .. (3.2,4.63);
	\draw[black!40!blue,dashed,opacity=0.5] (3.2,4.63) .. controls (3.6,4) .. (3.6,3.33);
	\draw[black!40!blue] (3.6,3.33) .. controls (4.4,4.2) .. (5.2,4.8);
	
	\draw[black!15!green,thick] (1.3,4.1) .. controls (2.6,3.97) and (3.1,3.97) .. (3.7,3.95);
	\draw[black!15!green,thick] (2.35,4.17) .. controls (2.7,4.15) and (3,4.11) .. (3.5,4.11);
	
	\draw[black!15!green,thick] (-3,0.2) .. controls (-1,0.1) and (1,0) .. (5.5,0.1);
	\draw[black!15!green,thick,dashed] (-3.5,0.23) .. controls (-3.3,0.205) and (-3.1,0.2) .. (-3,0.2);
	\draw[black!15!green,thick,dashed] (5.5,0.1) .. controls (5.6,0.1) and (5.8,0.105) .. (6,0.125);
	\draw[black!15!green,thick] (-2,0.3) .. controls (-0.2,0.2) and (3.5,0.17) .. (5.35,0.27);
	
	\draw (2.4,4) node {\tiny \textbullet};
	\draw (2.418,4.014) node[anchor=north east] {\footnotesize $ a_{-t}x $};
	\draw[->] (2.4,4) -- (2.3,3.7);
	
	\draw (2.96,4.1) node {\tiny \textbullet};
	\draw (2.96,4.04) node[anchor=south west] {\footnotesize $ a^\prime a_{-t}x $};
	\draw[->] (2.96,4.1) -- (2.85,3.8);
	
	\draw (-0.18,0.074) node {\tiny \textbullet};
	\draw (-0.25,0.03) node[anchor=north west] {\footnotesize $ x $};
	\draw[->] (-0.195,0.06) -- (-0.4,-0.5);
	
	\draw (1.802,0.198) node {\tiny \textbullet};
	\draw (1.8,0.18) node[anchor=south east] {\footnotesize $ a^\prime x $};
	\draw[->] (1.79,0.18) -- (1.65,-0.3);
	\end{tikzpicture}
	\caption{}\label{figure_special}
\end{figure}

At the heart of the proof is the following observation --- If the points $ a_{-t}x $ and $ a_{-t}y $ are close then the ergodic $ U $-averages of scale $Ce^t $ around $ x $ and $ y $ are also close (in an appropriate sense; see \cref{figure_special}). For any $ f \in C_c(\GmodGamma) $, taking $ y=a^\prime x $, arbitrarily large $ t $ and using the fact that $ a^\prime $ centralizes $ a_t $ forces the ergodic averages of $ f $ and $ f \circ a^\prime $ to be comparable, implying $ a^\prime $-quasi-invariance. 

\begin{prop}[Special case]\label{thm_special_case}
	Let $ \mu $ be a $ U $-e.i.r.m on $ \GmodGamma $. Assume that $ a^\prime \in \asymneighborhood \cap (MA \smallsetminus M) $ then $ a^\prime \in H_\mu $.
\end{prop}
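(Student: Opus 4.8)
The plan is to use the Ratio Ergodic Theorem to compare the $U$-ergodic averages of a test function $f$ at a generic point $x$ with those of $f$ at $a'x$, exploiting the hypothesis that the geodesic ray $(a_{-t}x)_{t>0}$ returns to near-identity-stabilizer configurations of the prescribed type $a'$. Fix a $\mu$-generic point $x = g\Gamma$; by the alternative formulation and the hypothesis $a' \in \asymneighborhood \cap (MA\smallsetminus M)$, we may pick sequences $t_n \to \infty$ and $\gamma_n \in a_{-t_n} g\Gamma g^{-1} a_{t_n}$ with $\gamma_n \to a'$. Writing $x_n = a_{-t_n} x$, this says that in the chart around $x_n$ the deck group element $\gamma_n$ is very close to $a'$, so $x_n$ and (a point very close to) $a' x_n$ are the ``same'' point of $\GmodGamma$ up to the error $\gamma_n (a')^{-1} \to e$. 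Conjugating back by $a_{t_n}$ and using that $a'$ centralizes $A$ (since $a' \in MA$), this transfers to a statement comparing the orbit of $x$ and the orbit of $a'x$ at scale $e^{t_n}$.

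The key quantitative step I would carry out is the following. For $f \in C_c(\GmodGamma)$ and $S>0$ consider the averaging operators $A_S f(y) = \int_{\|u\|_U < S} f(uy)\,du$. I want to show that for suitable radii $S_n \to \infty$ (of order $e^{t_n}$) one has
\[
\frac{A_{S_n} f(x)}{A_{S_n} h(x)} \;\approx\; \frac{A_{S_n} (f\circ a')(x)}{A_{S_n} h(x)}
\]
up to errors that vanish as $n\to\infty$, where $h \in C_c(\GmodGamma)$ is a fixed nonnegative bump with $\int h\,d\mu \neq 0$. The mechanism: $A_{S_n}f(x) = \int_{\|u\|_U < S_n} f(u x)\,du$; pushing the integration variable through $a_{t_n} a_{-t_n}$ and using that conjugation by $a_{-t_n}$ shrinks $U$ by $e^{-(d-1)t_n}$ in a controlled way, we rewrite this (after the substitution $u = a_{t_n} v a_{-t_n}$, roughly) as a weighted average over $v$ in a fixed-size ball of $f(a_{t_n} v\, x_n)$. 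At $x_n$, the element $\gamma_n \approx a'$ identifies $v\, x_n$ with a point near $(a')^{-1} v a' \cdot a' x_n$ — but condition (2) in Definition~\ref{dfn_alpha}, $\|(a')^{-1} v a'\|_U < \|v\|_U$, is exactly what guarantees that conjugation by $a'$ (the ``$M A\smallsetminus M$'' part, which has a nontrivial $A$-component since $a' \notin M$) is a contraction on $U$, so the transported ball sits inside the ball we are averaging over and the Jacobian of $v \mapsto (a')^{-1} v a'$ is the constant factor relating the two averages. Uniform continuity of $f$ together with $\gamma_n(a')^{-1} \to e$ controls the resulting error. One then divides by $A_{S_n} h(x)$, which along the same subsequence is bounded below away from $0$ because $x$ is $\mu$-generic and $\int h\,d\mu \neq 0$, and lets $n \to \infty$: by the Ratio Ergodic Theorem the left-hand ratio converges to $\int f\,d\mu / \int h\,d\mu$, while the right-hand side, again by the Ratio Ergodic Theorem applied to the (generic) point $x$ and the function $f\circ a'$, converges to $\int (f\circ a')\,d\mu / \int h\,d\mu$ — up to the constant Jacobian factor. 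Hence $\int f\circ a'\,d\mu = c \int f\,d\mu$ for all $f \in C_c$, with $c > 0$ the Jacobian; that is, $(a')_*\mu = c^{-1}\mu$ (or at worst $(a')_*\mu \sim \mu$), so $a' \in H_\mu = \mathrm{Stab}_{MA}([\mu])$.

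The main obstacle is making the ``transported ball sits inside the averaging ball'' step and the associated Jacobian bookkeeping honest: one has to choose the radii $S_n$ and a slightly shrunken comparison radius so that the image of the ball under the (contracting but not isometric) conjugation by $a'$, further perturbed by the small error $\gamma_n(a')^{-1}$ and by the non-commutativity of $U$ with $a_{t_n}$, is genuinely sandwiched between two concentric balls whose radii have the same leading exponential order $e^{t_n}$; only then does the Ratio Ergodic Theorem, which is insensitive to bounded multiplicative distortion of the averaging sets but not to a mismatch in scale, deliver the clean conclusion. A secondary technical point is that the Ratio Ergodic Theorem as stated is for a free ergodic $\R^n$-action, so one must first note that $U \cong \R^{d-1}$ acts freely (mod the usual null-set caveats handled by ergodicity) and that $\mu$-a.e.\ point is generic, which is already recorded in the excerpt. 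I expect the $M$-component of $a'$ to cause no trouble since it acts on $U$ by isometries and merely rotates the averaging balls; the entire contraction is carried by the $A$-component, whose nontriviality is guaranteed precisely because $a' \notin M$.
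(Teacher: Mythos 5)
Your matching-of-orbits mechanism is the same one the paper uses, but the way you handle the errors breaks down precisely at the point where the infinite-measure setting bites. First, you propose to control the discrepancy between $f(u(e^{t_n}s)x)$ and $(f\circ a')$ evaluated at the reparameterized point by uniform continuity of $f$, i.e.\ additively. After integrating over the ball of radius $S_n\sim e^{t_n}$ this error is of size $o(1)\cdot m_U(B^U_{S_n})$, and you then divide by $A_{S_n}h(x)$. But for an infinite $U$-invariant measure the Ces\`aro averages of $h$ tend to $0$, so $m_U(B^U_{S_n})/A_{S_n}h(x)\to\infty$ at a rate you have no control over (the sequence $t_n$ and the rate $\gamma_n\to a'$ are handed to you by $\mathcal S_x$ and cannot be tuned against it); hence ``errors that vanish as $n\to\infty$'' is unjustified. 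The paper avoids this by never producing an additive error: the small $N$- and $MA$-discrepancies are absorbed pointwise into the majorant $f_\bot(y)=\max\{f(v\ell y)\}$ of \cref{lma_witness}, giving the exact inequality $f(u(e^{t_n}s)x)\le f_\bot(u(e^{t_n}\varphi_n(s))x)$, and the smallness is encoded multiplicatively in $\int f_\bot\,d\mu/\int f\,d\mu<\eta$, which is exactly where the contradiction hypothesis $a'\notin H_\mu$ (singularity of the nearby translates $\ell.\mu$) enters. Second, your two-sided ``$\approx$'' requires comparing averages over a ball of radius $S_n$ with averages over its image of radius roughly $kS_n$, and your claim that the ratio ergodic theorem ``is insensitive to bounded multiplicative distortion of the averaging sets'' is false for Radon measures: the growth of $S\mapsto A_S h(x)$ between the scales $kS_n$ and $S_n$ is not controlled, and Hochman's theorem only speaks about balls. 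What the contraction and the inclusion \eqref{tech_specialcase_inclusion} actually give (using $f\ge 0$) is a one-sided bound, $\int_{e^{t_n}B}f(u(s)x)\,ds\le C\int_{e^{t_n}B}f_\bot(u(s)x)\,ds$, over the \emph{same} ball; this yields an inequality, never the identity $\int f\circ a'\,d\mu=c\int f\,d\mu$ with $c$ the Jacobian that you assert.

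Two further points. The hypothesis $a'\in\mathcal S_\mu\cap(MA\smallsetminus M)$ does not make conjugation by $a'$ on $U$ contracting --- condition (2) of \cref{dfn_alpha} concerns $\alpha(\gamma)$, not an arbitrary $a'$ --- so you must first reduce to the contracting case by replacing $a'$ with $(a')^{-1}$, using that both $\mathcal S_\mu$ and $H_\mu$ are symmetric under inversion (as the paper does); note the expanding case cannot be treated directly, since then $\varphi_n(B)\subset B$ fails. Finally, even if you repaired the error handling (e.g.\ by introducing the thickened majorant yourself), the one-sided inequality only gives a domination of $\mu$ by a constant times a translate of $\mu$; to conclude $a'\in H_\mu$ you would still need either the paper's contradiction scheme via \cref{lma_witness}, or an extra argument (both measures are $U$-invariant, ergodic and Radon, so absolute continuity forces proportionality of the Radon--Nikodym derivative). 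As written, the proposal is missing the ideas that make the argument survive infinite mass.
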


\begin{lemma}\label{lma_witness}
	Let $ a^\prime \in MA $ and $ 0<\varepsilon $ with $ B_\varepsilon^{MA}(a^\prime) \cap H_\mu = \emptyset $. Then for any $ 0< \eta $ there exists a function $ 0\leq f \in C_c^1(\GmodGamma) $ with $ \int f d\mu >0 $ and
	\[ \frac{\int  f_\bot d\mu}{\int f d\mu} <\eta \]
	where
	\[ f_\bot(x)=\max \{ f(v \ell x) : v \in B^N_\delta(e),\; \ell \in B^{MA}_\varepsilon(a^\prime) \} \]
	for some $ \delta = \delta(\eta,f) $.
\end{lemma}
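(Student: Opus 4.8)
The plan is to build $f$ as a suitably normalized bump function concentrated on a small "box" neighborhood of a carefully chosen point $x_0 \in \GmodGamma$, and to exploit the hypothesis $B_\varepsilon^{MA}(a') \cap H_\mu = \emptyset$ together with ergodicity to ensure that the "thickened" translate measured by $f_\bot$ carries negligible mass. Concretely, first I would use local coordinates: near a generic point $x_0$ the space $\GmodGamma$ looks like $U \times N \times MA$ via the map $(u,v,\ell) \mapsto u v \ell x_0$, and $\mu$ disintegrates into conditional measures on the $U$-plaques. The function $f_\bot(x) = \max\{f(v\ell x) : v \in B_\delta^N(e),\ \ell \in B_\varepsilon^{MA}(a')\}$ is supported, roughly speaking, on $(B_\delta^N(e) \cdot B_\varepsilon^{MA}(a'))^{-1} \cdot \operatorname{supp}(f)$, i.e. on a translate of $\operatorname{supp}(f)$ by the thin slab $N \cdot MA$-directions near $(a')^{-1}$; so controlling $\int f_\bot\, d\mu$ amounts to controlling the $\mu$-mass of such a translate.

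The key step is the following: since $H_\mu$ is the stabilizer of the measure class $[\mu]$ in $MA$ and $B_\varepsilon^{MA}(a') \cap H_\mu = \emptyset$, for every $\ell \in B_\varepsilon^{MA}(a')$ the measure $\ell.\mu$ is singular with respect to $\mu$. I would want a quantitative/uniform version of this: there is a compact set $Q \subseteq \GmodGamma$ with $\mu(Q)$ close to $\|\mu|_{\text{relevant region}}\|$ such that $\mu(\ell^{-1} Q_{\text{small nbhd}})$ is uniformly tiny for all $\ell$ in the (compact) ball $\overline{B_\varepsilon^{MA}(a')}$ — this is where I expect to lean on a Fubini/Lusin argument combined with compactness of $\overline{B_\varepsilon^{MA}(a')}$ to upgrade the pointwise singularity into a uniform smallness statement on a single large-measure set. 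Then I would choose $f$ to be (a smoothing of) a large multiple of the indicator of a small box around a density point $x_0$ of $Q$, normalized so $\int f\, d\mu = 1$; for $\delta$ small enough depending on $f$ and on the modulus of continuity coming from the previous step, the support of $f_\bot$ lands in a region of $\mu$-mass $< \eta$ and $f_\bot$ is bounded by $\|f\|_\infty$ there, whence $\int f_\bot\, d\mu < \eta \cdot \|f\|_\infty \cdot(\text{small}) < \eta \int f\, d\mu$ after adjusting constants. Taking $f$ smooth ($C_c^1$) is then just a routine mollification that does not affect the estimates.

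The main obstacle is precisely the uniformity over $\ell \in B_\varepsilon^{MA}(a')$: singularity of $\ell.\mu \perp \mu$ holds for each fixed $\ell$, but a priori the "bad sets" witnessing singularity could move with $\ell$ in an uncontrolled way, and $\mu$ is only $\sigma$-finite. I would handle this by first restricting attention to a fixed compact piece of $\GmodGamma$ where $\mu$ is finite, covering $\overline{B_\varepsilon^{MA}(a')}$ by finitely many small balls, and on each using that $\ell \mapsto \ell.\mu$ is continuous in a suitable weak sense together with the structure of $H_\mu$ as a closed subgroup (so $B_\varepsilon^{MA}(a')$ stays a positive distance from $H_\mu$, ruling out near-invariance). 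An alternative, possibly cleaner route avoiding the singularity machinery: choose $x_0$ to be a point of low "return density" — using the Radon/ergodic structure, $\mu$-a.e.\ point has the property that the $MA$-translates by $B_\varepsilon^{MA}(a')$ of tiny neighborhoods are $\mu$-thin (otherwise a Borel–Cantelli / recurrence argument would force some element of $\overline{B_\varepsilon^{MA}(a')}$ into $H_\mu$) — and build $f$ around such $x_0$ directly. Either way, once the uniform thinness of the translated region is in hand, the construction of $f$ and the final inequality are straightforward.
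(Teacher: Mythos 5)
Your outline has the right overall shape (build $f$ supported on a good set whose translates by $B^N_\delta(e)\cdot B^{MA}_\varepsilon(a')$ carry negligible mass), and you are right that each $\ell\in B^{MA}_\varepsilon(a')$ satisfies $\ell.\mu\perp\mu$: since $\ell$ normalizes $U$, the measure $\ell.\mu$ is again $U$-invariant and ergodic, so non-equivalence forces mutual singularity. But the step you yourself flag as the main obstacle --- upgrading singularity, which holds for each fixed $\ell$, to a statement uniform over the uncountably many $\ell$ in the ball --- is precisely where the proposal has no working mechanism. The conull sets $E_\ell$ witnessing $\ell.\mu\perp\mu$ (with $\mu(E_\ell^c)=0$ and $\mu(\ell^{-1}E_\ell)=0$) genuinely may vary with $\ell$; singularity is not an open condition in any weak topology, so ``continuity of $\ell\mapsto\ell.\mu$ plus a finite cover of $\overline{B^{MA}_\varepsilon(a')}$'' does not produce a single compact set $Q$ of positive measure with $\mu(\ell^{-1}Q')$ small for all $\ell$ simultaneously, and knowing $\mu(\ell^{-1}E_\ell)=0$ says nothing about $\mu(\ell^{-1}W)$ for $W$ a small neighborhood of a density point of $E_\ell$. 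The alternative ``low return density / Borel--Cantelli'' route is equally unsubstantiated. Moreover, basing the construction on a tiny box around one point turns the needed estimate into a differentiation-type statement (a ratio of two quantities both tending to $0$, uniformly in $\ell$), which is strictly harder than what the lemma requires.

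The missing idea, and the paper's actual mechanism, is genericity with respect to the ratio ergodic theorem (\cref{thm_ratio}): call $x$ $\mu$-generic if the ratio averages $\mathcal{R}$-converge for all pairs in $C_c(\GmodGamma)$; by separability $\mu$-a.e.\ point is $\mu$-generic. If $z$ is $\mu$-generic and $\ell\in MA\subset N_G(U)$, then $\ell z$ is $\ell.\mu$-generic (conjugation by $\ell$ sends norm balls in $U$ to norm balls, and Hochman's theorem allows any norm). A point cannot be generic for both $\mu$ and $\ell.\mu$ unless all the limiting ratios agree, which forces $\ell.\mu$ proportional to $\mu$, i.e.\ $\ell\in H_\mu$. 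Hence if $K$ is any compact set of positive measure consisting of $\mu$-generic points, the whole union $K_\bot=\bigcup_{\ell\in B^{MA}_\varepsilon(a')}\ell^{-1}K$ consists of non-generic points and is therefore $\mu$-null: the single conull set of generic points handles all $\ell$ at once, and no quantitative or uniform singularity estimate is ever needed. One then takes $\chi_K\le f\le 1$ with $\int(f-\chi_K)\,d\mu$ small and $\delta$ small, so that $f_\bot$ is supported in a small neighborhood of the compact set $\bigcup_{\ell\in\overline{B^{MA}_\varepsilon(a')}}\ell^{-1}K$ (shrinking $\varepsilon$ slightly if necessary, since $H_\mu$ is closed), and outer regularity gives $\int f_\bot\,d\mu<\eta\int f\,d\mu$. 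With this genericity step in place, the rest of your outline goes through with $K$ playing the role of your small box; without it, the proof as proposed does not close.
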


\begin{proof}
	Given any $ \mu $-generic point $ x \in \GmodGamma $ and any element $ \ell $ in the normalizer of $ U $, the point $ \ell x $ is $ \ell. \mu $-generic.
	Let $ K $ be any compact set of positive $ \mu $-measure consisting of $ \mu $-generic points. By the above observation we get that the set
	\[ K_\bot=\bigcup_{\ell \in B^{MA}_\varepsilon(a^\prime)}\ell^{-1} K \]
	satisfies $  K_\bot \cap K = \emptyset $.
	Taking $ \chi_K \leq f \leq 1 $ with $ \int (f-\chi_K)d\mu < \frac{\eta}{2} \mu(K) $ and taking $ 0<\delta $ sufficiently small ensures that the function
	\[ f_\bot=\max \{ f(v \ell x) : v \in B^N_\delta(e),\; \ell \in B^{MA}_\varepsilon(a^\prime) \} \]
	is very close to $ \chi_{K_\bot} $ and satisfies 
	$ \int f_\bot d\mu < \eta \mu(K) \leq \eta \int f d\mu $
	as required.
\end{proof}

\begin{proof}[Proof of \cref{thm_special_case}]
	The group $ H_\mu $ is closed in $ MA $ since
	\[ H_\mu=\bigcap_{f,g\in C_c(X)}\left \{ \ell \in MA : \ell.\mu(f) \cdot \mu(g) = \mu(f) \cdot \ell.\mu(g)  \right \}. \]
	Assume by contradiction that $ a^\prime \notin H_\mu $, then there exists an $ 0<\varepsilon $ for which $ B_\varepsilon^{MA}(a^\prime) \cap H_\mu = \emptyset $. The groups $ U $ and $ \R^{d-1} $ are isomorphic as Lie groups via a map $ s \mapsto u(s) $. Conjugation in $ U $ by $ a^\prime $ induces a similarity map $ kO $ on $ \R^{d-1} $, where $ O \in SO(d-1) $ and $ 0<k $. Both $ \asymneighborhood $ and $ H_\mu $ are closed under inversion hence we may assume without loss of generality that the conjugation map sending $u(s)$ to
	\[ (a^\prime)^{-1}u(s) a^\prime = u({kOs}) \]
	is contracting, i.e $ k<1 $. 
	
	Let $ f, f_\bot $ be the functions constructed in \cref{lma_witness} for $ \eta < \frac{k}{4} $ and $ \varepsilon $. Let $ x=g\Gamma $ be a $ \mu $-generic point with $ a^\prime \in \mathcal{S}_x $. By the definition of $ \mathcal{S}_x $ there exist sequences $ t_n \to \infty $ and $ \gamma_n \to a^\prime $ such that
	\begin{equation}\label{tech_specialcase_conj}
	(a_{-t_n}g)^{-1}\gamma_n(a_{-t_n}g) \in \Gamma.
	\end{equation}
	
	Note that for all $ s \in \R^{d-1} $ we have
	\[ u(s)\gamma_n \to u(s)a^\prime=a^\prime u(kOs) \in NMAU \]
	where the set $ NMAU $ is open in $ G $. Let $ B $ be the open unit ball in $ \R^{d-1} $ (the specific norm is irrelevant); one can define smooth functions
	\[ \varphi_n: B \to \R^{d-1}, \quad v_n : B \to N \;\;\text{and}\;\;  \ell_n: B \to MA \]
	satisfying for all large $ n $ that
	\[ u(s) \gamma_n = v_n(s) \ell_n(s) u({\varphi_n(s)}). \]
	Note that the above sequences of functions converge uniformly on $ B $ as follows:
	\[ \varphi_n \to kO, \quad v_n \to e \;\;\text{and}\;\; \ell_n \to a'.  \]
	
	We may now compute
	\begin{align*}
	u({e^{t_n}s}) g\Gamma &=a_{t_n} u(s) a_{-t_n}  g \Gamma \overset{(\ref{tech_specialcase_conj})}{=} a_{t_n}(u(s)\gamma_n) a_{-t_n}g\Gamma=\\
	& = a_{t_n} v_n(s) \ell_n(s) u({\varphi_n(s)})a_{-t_n}g\Gamma = \\
	&= (a_{t_n} v_n(s) a_{-t_n}) \ell_n(s) u({e^{t_n}\varphi_n(s)})g\Gamma.
	\end{align*}
	
	For all large $ n $, the values of $ v_n $ are uniformly close to $ e $ while the conjugation $ a_{t_n} v_n(s) a_{-t_n} $ further contracts this term. In addition, $ \ell_n $ is uniformly close to $ a' $ implying
	\begin{equation}\label{tech_specialcase_distance}
	d_{\GmodGamma} \left(u({e^{t_n}s})x,a^\prime u({e^{t_n}\varphi_n(s)}) x \right) < \min \{\delta,\varepsilon\}
	\end{equation}
	for all large $ n $, where $ \delta $ is the constant in the construction of $ f_\bot $ in \cref{lma_witness}. We consequently deduce
	\begin{equation}\label{tech_specialcase_f2fbot}
	f(u({e^{t_n}s})x)\leq f_\bot (u({e^{t_n}\varphi_n(s)})) 
	\end{equation}
	for all $ s \in B $.
	
	Note that while contracting the errors in $ NMA $, conjugation by $ a_{t_n} $ expands the $ U $-direction. In particular, it is not necessarily true that $ e^{t_n} \varphi_n $ is close pointwise to $ e^{t_n}kO $ on $B$. Despite of this, integration over all of $ B $ under these two changes of variables is in fact comparable. More precisely, since $ \varphi_n \to kO $ uniformly on $ B $ and $k<1$ we know 
	\begin{equation}\label{tech_specialcase_jacobian}
	\min_{s\in B}|\textbf{Jac}_s(\varphi_n)| > \frac{1}{2}k
	\end{equation}
	and
	\begin{equation}\label{tech_specialcase_inclusion}
	\varphi_n(B)\subset B 
	\end{equation}
	for all large $ n $, where $ \textbf{Jac}_s (h) $ denotes the Jacobian of the function $ h $ at the point $ s $. This implies
	\begin{align*}
	\int_{e^{t_n}B}f(u(s) x)ds &=e^{t_n}\int_{B}f(u(e^{t_n} s) x)ds\\
	&\overset{(\ref{tech_specialcase_f2fbot})}{\leq} e^{t_n}\int_{B}f_\bot(u({e^{t_n}\varphi_n(s)}) x)ds\\
	&\overset{(\ref{tech_specialcase_jacobian})}{\leq} 
	2k^{-1}\int_{e^{t_n}\varphi_n(B)}f_\bot(u(s) x)ds \\
	&\overset{(\ref{tech_specialcase_inclusion})}{\leq} 2k^{-1}\int_{e^{t_n}B}f_\bot(u(s) x)ds.
	\end{align*}
	
	In conclusion, we have shown that for all large $ n $
	\[ 1 = \frac{\int_{\|s\|<e^{t_n}}f(u(s) x)ds}{\int_{\|s\|<e^{t_n}}f(u(s) x)ds} \leq 2k^{-1}\frac{\int_{\|s\|<e^{t_n}}f_\bot(u(s) x)ds}{\int_{\|s\|<e^{t_n}}f(u(s) x)ds} \]
	which is in contradiction to the ratio ergodic theorem and the choice of $ \eta $.
\end{proof}

\begin{rem}
	The seemingly innocuous condition \eqref{tech_specialcase_inclusion} with $B$ a ball centered at the origin  (which implies, since $x$ is a generic point, that the ratio ergodic theorem holds along $e^{t_n}B$ for the given sequence $t_n$) is what makes the special case of $ a' \in \asymneighborhood \cap (MA \smallsetminus M) $ so significantly simpler. For the general case, when $a'$ is the image under $\alpha$ of a point in $\asymneighborhood \smallsetminus MA $, one can obtain a similar matching of orbits, but for shifted balls $B_n$ (with the shift, that depends on $n$, bounded by a constant, and radius $\sim 1$); however, since we are classifying infinite measures very little can be said directly about how the points $u(e^{t_n}s).x$ for $s\in B_n$ reflect the behavior of the measure $\mu$, e.g.\ if there is even one $s \in B_n$ for which $u(e^{t_n}s).x$ is in an appropriate compact set of ``good'' points.
\end{rem}

\section{The Horosphere Reparameterization Map}\label{Section_reparam_map}

Let $ P=NMA $ be a minimal parabolic subgroup of $ G $. Let $ \pi : G \to P\backslash G $ be the projection onto $ P\backslash G \cong M \backslash K \cong S^{d-1} $. Identifying $ G $ with $ \FHd $, the frame bundle of hyperbolic $ d $-space, this quotient map corresponds to the projection of frames onto the limit points in $ \partial \Hd $ they tend to under the geodesic flow $ a_t $ as $ t \to +\infty $. 

Let $ \omega \in G $ be any representative of the non-trivial coset in the Weyl group of $ G $, i.e $ e\neq [\omega] \in W={C_G(A)} \backslash N_G(A) \cong \Z_2 $ and $ P\omega \neq P $. Using the fact that $ \omega N\omega=U $ one can reformulate the Bruhat decomposition of $ G $ as
\[ G = G\omega = (P\omega P \cupdot P)\omega = PU \cupdot P\omega  \]
where $ PU $ is a Zariski open set in $ G $. Note that $ U\cap P=\{e\} $ implying that the restricted function
\[ \pi|_U : U \to (P\backslash G) \smallsetminus \{P\omega\}  \]
is a bijection. Any element $ \gamma \in G $ acts by right multiplication on $ P\backslash G \cong S^{d-1} $ as a M{\"o}bius transformation. This action commutes with $ \pi $ therefore any horosphere $ U\gamma $ projects injectively via $ \pi $ onto $ (\PbackslashG)  \smallsetminus \{P\omega\gamma\} $. Let 
\[ v_{\gamma}=(\pi|_U)^{-1}(P\omega \gamma^{-1})  \]
be the point in $ U $ satisfying $ \pi(v_{\gamma}\gamma)=P\omega $, whenever such point exists.

Denote by $ \iota:N\times MA \times U \hookrightarrow G $ the multiplication map $ (n,ma,u)\mapsto nmau $. This map is an injective diffeomorphism onto its image $ PU $. 

Let $ \Phi=(\Phi_N,\Phi_{MA},\Phi_U) $ be the following smooth map
\[ \Phi:\;G\times U \smallsetminus \{(g,v_{g})\}_{g\in G} \to N \times MA \times U \]
defined by
\[ \Phi(\gamma,u)=\iota^{-1}(u\gamma). \]

Note that for any point $ g\Gamma \in \GmodGamma $, any element $ \gamma^\prime \in \Gamma $ and corresponding
\[ \gamma=g\gamma^\prime g^{-1} \in \text{Stab}_G(g\Gamma), \]
the following identity holds for all $ u \in U $ in the domain of definition of $ \Phi $:
\[ ug\Gamma=u\gamma g \Gamma = \Phi_{N}(\gamma,u) \Phi_{MA}(\gamma,u) \Phi_{U}(\gamma,u) g \Gamma. \]
Therefore $ \Phi $ may be viewed as a reparameterization map for the horospherical orbit $ Ug\Gamma $ in $ \GmodGamma $.

\vspace{1.5mm}
Corresponding to any hyperbolic (loxodromic) $ \gamma \in G $ there exist two distinct fixed points for the action of $ \gamma $ on $ P\backslash G $ by $ Pg\mapsto Pg\gamma $, one attracting fixed point and one repelling fixed point. 
Let $ Pk_\gamma^- $ be the attracting fixed point of $ \gamma $, then the map $ \gamma \mapsto Pk_\gamma^- $ defined on the open subset of hyperbolic elements in $ G $ is continuous. 
For any hyperbolic $ \gamma $ with $ Pk_\gamma^-\neq P\omega $ denote
\[ u_\gamma = (\pi|_U)^{-1}(Pk_\gamma^-). \]
Since $ \pi(u_\gamma\gamma)=\pi(u_\gamma) $ one has $ \Phi_U(\gamma,u_\gamma)=u_\gamma $ and
\begin{equation}\label{tech_Phi2alpha}
u_\gamma \gamma=\Phi_N(\gamma,u_\gamma)\Phi_{MA}(\gamma,u_\gamma)u_\gamma.
\end{equation}
The fact that we have chosen the point $ u_\gamma $ with respect to the attracting fixed point of $ \gamma $, together with \eqref{tech_Phi2alpha} ensures that $ \Phi_{MA}(\gamma,u_\gamma) $ satisfies both of the conditions of $ \alpha(\gamma) $ in \cref{dfn_alpha}, hence in these notations
\[ \alpha(\gamma)=\Phi_{MA}(\gamma,u_\gamma). \]

Note that whenever $ Pk_\gamma^-=P\omega $ then $ Pk_{\gamma^{-1}}^-\neq P\omega $. This ensures that given any hyperbolic element $ \gamma $, at least on of $ \alpha(\gamma) $ and $ \alpha(\gamma^{-1}) $ is non-trivial.

\begin{rem}
	It is worth noting that $ \gamma $ and $ \alpha(\gamma) $ are conjugate, as (\ref{tech_Phi2alpha}) implies 
\begin{equation}\label{tech_alpha_conjugacyrelation}
\exists n\in N \quad \alpha(\gamma) = (nu_\gamma) \gamma (nu_\gamma)^{-1}.
\end{equation}
Therefore $ \alpha(\gamma) $ may be thought of as a representative in $ MA $ of the action of $ \gamma $ along its axis in $ \FHd $.
\end{rem}

Let $ R: G \acts \PbackslashG $ be the smooth (contravariant) action on $ \PbackslashG $ by right multiplication $ R_\gamma: Pg \mapsto Pg\gamma $.  Notice that for any $ \gamma \in G $ the maps $ \Phi_U(\gamma,\cdot) $ and $ R_{\gamma} $ are conjugate since
\begin{equation}\label{tech_lma_conjPhi}
\Phi_U(\gamma,\cdot)= (\pi|_U)^{-1}\circ R_{\gamma} \circ (\pi|_U).
\end{equation} 
The following technical lemma is a consequence of \eqref{tech_lma_conjPhi} together with basic properties of the action of hyperbolic elements on $ \PbackslashG \cong \partial \Hd $.

\medskip
Fix some bi-invariant metrics on $ U, N $ and $ MA $ and denote $ B^U_r, B^N_r, B^{MA}_r $ the open balls of radius $ r $ around the identity in these groups respectively. As before, let $ \textbf{Jac}_u(F) $  denote the Jacobian of the function $ F $ at the point $ u $.

\begin{lemma}\label{lma-Phi}
	Let $ \gamma_0 $ be any hyperbolic element of $ G $. Let $ Pk_{\gamma_0}^- $ and $ Pk_{\gamma_0}^+ $ be the attracting and repelling fixed points for $ \gamma_0 $ in $ \PbackslashG $ respectively. If $ Pk_{\gamma_0}^-\neq P\omega $ and $ Pk_{\gamma_0}^+ \neq P $ then for any $ \varepsilon $ there exist:
	\begin{itemize}[leftmargin=*]
		\item an open neighborhood $ V_0 \subset G $ of $ \gamma_0 $
		\item constants $ k \in \N $, $ 0 < c $, $ 0<R $
		\item balls $ D $ and $ I $ around $ e $ in $ U $
	\end{itemize}
	such that for any $ \gamma \in V_0 $:
	\begin{enumerate}[leftmargin=*]
		\item the map $ \Phi(\gamma^k,\cdot) $ is defined and smooth on $ D $
		\item the map $ \Phi(\gamma,\cdot) $ is defined and smooth on $ E=I \cdot \Phi_U(\gamma^k,D) $
		\item $ B^U_{cr} \subseteq \Phi_U(\gamma^k,B^U_{r}) \cdot (\Phi_U(\gamma^k,e))^{-1} \subseteq B^U_{r} $ for all $ r > 0 $ satisfying $ B_r^U \subseteq D $
		\item $ \forall u \in E\; \forall v \in D \quad c<|\mathbf{Jac}_{u}(\Phi_U(\gamma,\cdot))| < 1 $ and $ c < |\mathbf{Jac}_{v}(\Phi_U(\gamma^k,\cdot))| < 1 $
		\item $ \Phi(\gamma^k,D) \subseteq B_R^N\times B_R^{MA}\times B_R^U $
		\item $ \Phi(\gamma,E) \subseteq B_R^N\times B_R^{MA}\times B_R^U $
	\end{enumerate}
	and moreover:
	\begin{enumerate}[leftmargin=*]
		\setcounter{enumi}{6}
		\item $ \Phi_{MA}(\gamma,E)\subseteq B^{MA}_\varepsilon(\alpha(\gamma_0)) $, where $ B^{MA}_\varepsilon(\alpha(\gamma_0)) $ denotes the ball of radius $ \varepsilon $ around $ \alpha(\gamma_0) $ in $ MA $
		\item $ \forall v \in D \;\; \Phi_U(\gamma,I\cdot \Phi_U(\gamma^k,v)) \subseteq I \cdot \Phi_U(\gamma^k,v) $
	\end{enumerate}
\end{lemma}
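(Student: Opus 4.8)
The plan is to transfer the entire statement to the boundary sphere $\PbackslashG\cong S^{d-1}$ via $\pi|_U$ and the conjugacy \eqref{tech_lma_conjPhi}, and to read off every clause from the North--South dynamics of hyperbolic M\"obius transformations. I would first record two structural observations. First, $\Phi_U$ is a cocycle: $\Phi_U(\gamma^k,\cdot)=\Phi_U(\gamma,\cdot)^{\circ k}$, which follows from $R_{\gamma^k}=R_\gamma^{\circ k}$ via \eqref{tech_lma_conjPhi}, or directly from the defining relation of $\Phi$ together with the fact that $MA$ normalises $N$. Second, $U$ acts on $(\PbackslashG)\smallsetminus\{P\omega\}$ by ``translations'', so $\pi|_U$ is affine and, for the Euclidean metric $\|\cdot\|_U$ of \cref{dfn_alpha}, a similarity onto its image; consequently each $\Phi_U(\gamma,\cdot)$ is the restriction to its domain of a M\"obius transformation of $\widehat{\R^{d-1}}$, hence maps $\|\cdot\|_U$-balls to $\|\cdot\|_U$-balls and has a similarity (positive scalar times orthogonal) as its differential at every point. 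For hyperbolic $\gamma$ this transformation has attracting and repelling fixed points $u_\gamma=(\pi|_U)^{-1}(Pk_\gamma^-)$ and $u_\gamma^+=(\pi|_U)^{-1}(Pk_\gamma^+)$, with similarity factor at $u_\gamma$ equal to the contraction rate $0<\mu_\gamma<1$; and $\gamma\mapsto(Pk_\gamma^-,Pk_\gamma^+)$, hence $\gamma\mapsto\alpha(\gamma)=\Phi_{MA}(\gamma,u_\gamma)$, is continuous on the (open) set of hyperbolic elements.

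In this language the hypotheses $Pk_{\gamma_0}^-\neq P\omega$ and $Pk_{\gamma_0}^+\neq P$ say exactly that $u_{\gamma_0}$ lies in the chart and that $e\in U$ --- the common centre of all the balls in the statement --- is not the repelling fixed point, so that $\Phi_U(\gamma_0,\cdot)^{\circ k}$ drags any fixed neighbourhood of $e$ into an arbitrarily small neighbourhood of $u_{\gamma_0}$ as $k\to\infty$. The remainder is bookkeeping of this contraction, choosing the parameters in an acyclic order. Given $\varepsilon$: shrink $V_0$ so that every $\gamma\in V_0$ is hyperbolic with fixed points near those of $\gamma_0$ (so $u_\gamma$ stays in the chart and $u_\gamma^+$ stays uniformly off $e$), with $\alpha(\gamma)\in B^{MA}_{\varepsilon/2}(\alpha(\gamma_0))$, and with the similarity factor of $\Phi_U(\gamma,\cdot)$ bounded by some $\mu<1$ throughout a fixed neighbourhood of $u_{\gamma_0}$; and fix a neighbourhood $\mathcal O$ of $u_{\gamma_0}$ on which $\Phi_{MA}(\gamma,\cdot)\in B^{MA}_\varepsilon(\alpha(\gamma_0))$ for all $\gamma\in V_0$. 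Pick $\rho>0$ small, put $I=B^U_\rho$ and $W=B^U_{3\rho}(u_{\gamma_0})\subseteq\mathcal O$, with $\rho$ small enough that $\Phi_U(\gamma,\cdot)$ genuinely $\mu'$-contracts $\|\cdot\|_U$-distances on $W$ for some $\mu'<1$. Pick $D=B^U_{r_0}$ around $e$ avoiding a fixed neighbourhood of $u_{\gamma_0}^+$; after a further shrink of $V_0$, and for $k$ large, that neighbourhood also contains the pole $v_{\gamma^k}=(\pi|_U)^{-1}(R_{\gamma^{-k}}(P\omega))\to u_\gamma^+$, which --- together with smoothness of $\Phi$ --- gives (1)--(2). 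Then take $k$ so large that $\Phi_U(\gamma^k,D)$ lies in a prescribed tiny neighbourhood $W'$ of $u_{\gamma_0}$; this at once (a) makes $\Phi_U(\gamma^k,\cdot)$ strongly contracting on $D$ --- only boundedly many of its $k$ cocycle factors can fail to be $\le\mu'$-contracting --- hence gives the upper Jacobian bounds and the inclusion $\Phi_U(\gamma^k,B^U_r)\cdot(\Phi_U(\gamma^k,e))^{-1}\subseteq B^U_r$ of (3)--(4); (b) puts $E=I\cdot\Phi_U(\gamma^k,D)\subseteq W\subseteq\mathcal O$, which is (7); and (c) forces, for each $v\in D$, the point $p=\Phi_U(\gamma^k,v)$ so close to $u_\gamma$ that $\Phi_U(\gamma,\cdot)$, which fixes $u_\gamma$ and $\mu'$-contracts on $W$, maps $I\cdot p=B^U_\rho(p)$ into itself --- which is (8). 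Finally, with $k$ fixed, let $c>0$ be the minimum of the finitely many positive lower Jacobian bounds over $\overline D$ and $\overline E$ and of the constant in the lower inclusion of (3) (which comes from $D\Phi_U(\gamma^k,\cdot)(e)$ being a similarity and $\Phi_U(\gamma^k,\cdot)$ being a uniform diffeomorphism of $\overline D$), and let $R$ exceed the uniform bounds on $\Phi(\gamma^k,\overline D)$ and $\Phi(\gamma,\overline E)$; this supplies the rest of (3)--(6).

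The main obstacle is not any individual estimate but two points of set-up. First, clauses (3) and (8) come out clean --- and not merely up to bounded distortion --- only because $\pi|_U$ is affine and a $\|\cdot\|_U$-similarity, so that the differentials of $\Phi_U(\gamma,\cdot)$ are honest similarities in exactly the metric fixed in \cref{dfn_alpha}; pinning this down (equivalently, that $U$ acts on $(\PbackslashG)\smallsetminus\{P\omega\}$ by translations) is the one genuinely structural input and I would make sure it is written out carefully. Second, one must keep the chain of choices $V_0\to\rho\,(=I)\to D\to k\to c,R$ acyclic, with every estimate uniform over $\gamma\in V_0$; that uniformity is where compactness of $\overline D$ and $\overline E$, smoothness of $\Phi$, and continuity of $\gamma\mapsto(Pk_\gamma^-,Pk_\gamma^+)$ on hyperbolic elements are used. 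Everything else is a routine pairing of North--South dynamics with first-order Taylor estimates.
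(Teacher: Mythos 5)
Your proposal is correct and follows the same basic route as the paper: both pass to $\PbackslashG$ via the conjugacy \eqref{tech_lma_conjPhi}, exploit the North--South dynamics of $R_\gamma$ for $\gamma$ near $\gamma_0$, take a high power $\gamma^k$ to push the image of $D$ into a small neighbourhood of the attracting fixed point while keeping the poles away from $D$ and $E$, and get uniformity over $\gamma\in V_0$ from continuity of $\Phi$ and compactness of $\overline D$ and $\overline E$; items (3)--(6) are handled in both proofs by non-vanishing Jacobians on compact sets, contraction after possibly increasing $k$, and a compactness bound for $R$. The one substantive difference is the mechanism for the key clause (8): you obtain it from a genuine Lipschitz-contraction estimate ($\Phi_U(\gamma,\cdot)$ fixes $u_\gamma$ and $\mu'$-contracts on $W$, and $\Phi_U(\gamma^k,D)$ lies within roughly $\rho(1-\mu')/(1+\mu')$ of $u_\gamma$), which is why you must invoke the conformal/M\"obius structure of the boundary action and take the bi-invariant metric on $U$ to be Euclidean --- note that \cref{dfn_alpha} only fixes ``some'' bi-invariant metric, though an $M$-invariant (essentially Euclidean) choice is clearly the intended one, so this is a hypothesis to state rather than a gap. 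The paper instead works with the induced metric $d_\pi$ and derives (8) from a purely set-theoretic nesting of three balls, namely $R_\gamma\bigl(B^{d_\pi}_{r_2}(Pk^-_{\gamma_0})\bigr)\subseteq B^{d_\pi}_{r_1}(Pk^-_{\gamma_0})$ together with $B^{d_\pi}_{r_1}(Pk^-_{\gamma_0})\subseteq B^{d_\pi}_{\nicefrac{(r_1+r_2)}{2}}(R_{\gamma^k}(Pg))\subseteq B^{d_\pi}_{r_2}(Pk^-_{\gamma_0})$, which requires no similarity structure, no cocycle iteration, and works verbatim for an arbitrary bi-invariant metric on $U$. Your chain of choices $V_0\to\rho\to D\to k\to c,R$ is acyclic and the estimates you sketch do go through with the Euclidean choice, so the argument is sound; the paper's nesting trick simply buys the same conclusion with less structural input.
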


\begin{figure}[b]\label{Figure_lma_Phi}
	\begin{tikzpicture}[scale=0.8]
	\def\R{6};
	\def\r{3};
	
	\fill[ball color=white, fill opacity=0.2] (0,0) circle (\R);
	\draw[opacity=0.2]  (-\R,0) arc (180:360:6cm and 1.5cm);
	\draw[dashed, opacity=0.2]  (\R,0) arc (0:180:6cm and 1.5cm);
	
	\fill[ball color=blue, fill opacity=0.2] (0,\r-\R) circle (\r);
	\draw[opacity=0.5]  (-\r,\r-\R) arc (180:360:3cm and 0.25*3cm);
	\draw[dashed, opacity=0.5]  (\r,\r-\R) arc (0:180:3cm and 0.25*3cm);
	
	\draw[thick, dashed, fill=black, rotate around={40:(0,0)}, fill opacity=0.1, draw opacity=0.6] (4.5,0.2) ellipse (0.79cm and 1.2cm);
	
	\draw (3.25,2.65) node {\tiny \textbullet};
	\draw (3.2,2.55) node[anchor=west] {\footnotesize $ Pk_{\gamma}^- $};
	\draw (-4.2,-2.2) node {\tiny \textbullet \footnotesize $ Pk_{\gamma}^+ $};
	\draw (0,\R-0.04) node {$ \cdot $};
	\draw (0,\R+0.2) node {\scriptsize $ P $};
	\draw (0,-\R) node {$ \cdot $};
	\draw (0,-\R-0.2) node {\scriptsize $ P\omega $};
	
	\draw[thick, dashed, fill=black, rotate around={40:(0,0)}, fill opacity=0.1, draw opacity=0.4] (0.3,-1.7) ellipse (0.3cm and 0.5cm);
	\draw[color=black, opacity=0.4] (0,\r-\R-0.4) node {\Large $ U $};
	
	\fill[color=red, fill opacity=0.3]  (0,5.5) ellipse (2 and 0.5);
	
	\draw[fill=red, fill opacity=0.3, draw opacity=0, rotate around={48:(0,0)}]  (4.5,-0.45) ellipse (0.2 and 0.25);
	
	\fill[color=red, opacity=0.4]  (0,-0.2) node (v1) {} ellipse (0.6 and 0.2);
	
	\draw[thick] (3.35,3) arc (-12:92:2);
	\draw (2.3,4.6) node {\large $\gamma^k$};
	
	\draw (-0.7,-0.5) node {$ D $};
	
	\draw[dashed,opacity=0.2]  plot[smooth, tension=.7] coordinates {(-2,5.5) (-1.4,4) (-0.85,2) (-0.5,2*\r-\R)};
	\draw[dashed,opacity=0.2]  plot[smooth, tension=.7] coordinates {(2,5.5) (1.4,4) (0.85,2) (0.5,2*\r-\R)};
	
	\draw[dashed,opacity=0.2]  plot[smooth, tension=.7] coordinates {(2.47,3.95) (1.9,2.6) (1,-0.6)};
	\draw[dashed,opacity=0.2]  plot[smooth, tension=.7] coordinates {(4,2) (2.75,0.2) (1.6,-1.5)};
	
	\draw (0,0.07) node {$ \uparrow $};
	\draw (-0.15,0.1) node[anchor=south] {\tiny $ e $};
	
	\node[rotate=-5] at (0.4,0.05) {$ \uparrow $} ;
	\draw (0.28,0.08) node[anchor=south] {\tiny $ v $};
	
	\node[rotate=-30] at (1.45,-0.92) {$ \uparrow $} ;
	\node[rotate=-35] at (1.95,-0.45) {\tiny $ \Phi_U(\gamma^k,v) $};
	
	\node[rotate=60, opacity=0.7] at (1.25,-1.20) {\tiny $ I $};
	
	\end{tikzpicture}
	\caption{\cref{lma-Phi} in the hyperbolic ball model. Here we identify $ \PbackslashG  $ with $ \partial\mathbb{H}^3=S^{2} $, and the subgroup $ U $ with the image of $ Ue $ under the projection $ G \to T^1\mathbb{H}^3 $ to the unit tangent bundle of $ \mathbb{H}^3 $.}
\end{figure}

\begin{proof}
	Use the map $ \pi|_U $ to induce a metric $ d_\pi $ onto $ (\PbackslashG) \smallsetminus \{P\omega \} $ satisfying for any $ Pg \neq P\omega $ and $ 0<r $, 
	\[ B^{d_\pi}_r(Pg)=\pi \left(B^U_r\left((\pi|_U)^{-1}(Pg)\right)\right). \]
	
	Since the map $ R_{\gamma_0} $ contracts points near $ k_{\gamma_0}^- $, there exist $ 0< r_1 < r_2 $ and a neighborhood $ V_1 \subset G $ of $ \gamma_0 $, satisfying for every $ \gamma \in V_1 $
	\[ R_\gamma(B^{d_\pi}_{r_2}(Pk_{\gamma_0}^-)) \subseteq  B^{d_\pi}_{r_1}(Pk_{\gamma_0}^-). \]
	Moreover, we can choose $ r_2 $ small enough so that
	\[ P\omega\gamma^{-1} \notin B^{d_\pi}_{r_2}(Pk_{\gamma_0}^-). \]
	Since $ Pk_{\gamma_0}^+\neq P $,
	\[ P\gamma_0^\ell \to Pk_{\gamma_0}^- \text{ and } P\omega\gamma_0^{-\ell} \to Pk_{\gamma_0}^+ \quad \text{as } \ell \to \infty. \]
	Hence there exist $ 0<\rho, k $ and a neighborhood $ V_0 \subset V_1 $ of $ \gamma_0 $ satisfying for every $ \gamma \in V_0 $ and every $ Pg \in B^{d_\pi}_\rho (P) $
	\[ B^{d_\pi}_{r_1}(Pk_{\gamma_0}^-)) \subseteq B^{d_\pi}_{\nicefrac{(r_2+r_1)}{2}}(R_{\gamma^k}(Pg)) \subseteq B^{d_\pi}_{r_2}(Pk_{\gamma_0}^-)) \]
	and
	\[ P\omega\gamma^{-k} \notin \overline{B^{d_\pi}_\rho (P)}. \]
	This implies in particular
	\begin{equation}\label{tech_inclusion_in_PbackslashG}
	R_\gamma \left(B^{d_\pi}_{\nicefrac{(r_2+r_1)}{2}}(R_{\gamma^k}(Pg))\right) \subseteq B^{d_\pi}_{\nicefrac{(r_2+r_1)}{2}}(R_{\gamma^k}(Pg)).
	\end{equation}
	
	Let $ D=B^U_{\rho} $ and $ I=B^U_{\nicefrac{(r_2+r_1)}{2}} $. For all $ \gamma \in V_0 $ the maps $ \Phi(\gamma^k,\cdot) $ and $ \Phi(\gamma, \cdot) $ are defined on $ \overline{D} $ and 
	\[ \overline{I\cdot (\pi|_U)^{-1}\left(R_{\gamma^k}\left(B^{d_\pi}_{\rho}(P)\right)\right)} \] respectively. Item (8) is directly implied from (\ref{tech_inclusion_in_PbackslashG}) using the conjugation relation \eqref{tech_lma_conjPhi}. Continuity of the map $ \Phi $ implies item (7), after further restriction of $ V_0 $ if necessary. 
	After possibly increasing $ k $, items (3) and (4) follow from the fact that the maps $ \Phi_U(\gamma^k,\cdot) $ and $ \Phi_U(\gamma,\cdot) $ have non-vanishing Jacobians on the compact sets 
	\[ \overline{D} \ \text{ and }\ \overline{I \cdot \Phi_U(\gamma^k,D)}, \] by (\ref{tech_lma_conjPhi}), and are contracting.
\end{proof}

\section{Proof of \cref{main_theorem}}\label{Section_proof_mainthm}

Given an element $ g \in G $ or a set $ F \subset G $ we denote $ g^{a_t}:=a_t g a_{-t} $ and similarly $ F^{a_t}:= a_t F a_{-t}$, for any $ t \in \R $. Recall that $ B^U_r $ denotes the ball of radius $ r $ around $ e $ in $ U $ w.r.t. some fixed bi-invariant norm. Similarly for $ B^N_r $ and $ B^{MA}_r $.

Given functions $ h_1,h_2 \in L^1(\GmodGamma) $, a point $ x \in \GmodGamma $ and a bounded measurable set $ E \subset U $ denote
\[ \mathcal{R}(h_1,h_2,x,E) = \frac{\int_E h_1(u x)du}{\int_E h_2(u x)du}. \]
Since $a_t$ normalizes $U$, and expands $U$ for $t>0$, the ratio ergodic theorem may be stated as follows: fixing  $ B=B_r^U $,
\[ \lim_{t \to \infty}\mathcal{R}(h_1,h_2,x,B^{a_t}) = \frac{\int h_1 d\mu}{\int h_2 d\mu} \qquad \text{for $ \mu $-a.e.~$ x \in \GmodGamma $},\]
whenever $ \int h_2 d\mu \neq 0 $.

\medskip
We now begin the proof of \cref{main_theorem}.
Let $ \gamma_0 \in \asymneighborhood $ be a hyperbolic element for which $ \alpha(\gamma_0) \in MA $ is well defined, i.e. $ Pk_{\gamma_0}^-\neq P\omega $ where $ Pk_{\gamma_0}^- $ is the attracting fixed point of $ \gamma_0 $ in $ \PbackslashG  $. 

Without loss of generality we may assume the repelling fixed point admits $ Pk_{\gamma_0}^+ \neq P $. Indeed, in the case where $ Pk_{\gamma_0}^+ = P $ then $ \gamma_0 \in P $, implying both $ \alpha(\gamma_0) $ and $ \alpha(\gamma_0^{-1}) $ are non-trivial and are inverses of one another, in which case we may replace $ \gamma_0 $ by $ \gamma_0^{-1} $.

We need to show $ \alpha (\gamma_0) \in H_\mu $. Assume otherwise in contradiction. Then there exists an $ \varepsilon > 0 $ for which $ B^{MA}_\varepsilon(\alpha(\gamma_0)) $, the $ \varepsilon $-neighborhood of $ \alpha(\gamma_0) $ in $ MA $, is disjoint from $ H_\mu $. Let $ V_0,D,I,k,c,R $ be the sets and constants chosen in \cref{lma-Phi} for $ \varepsilon $ and $ \gamma_0 $. 

Fix some $ 0<\eta_1<\nicefrac{c}{2} $. By \cref{lma_witness} there exists a function witnessing the singularity $ \ell.\mu \bot \mu $ for all $ \ell \in B^{MA}_\varepsilon\left( \alpha (\gamma_0) \right) $, more precisely there exists $ f \in C_c(\GmodGamma) $ and a constant $ \delta > 0 $ for which the function
\[ f_\bot(x)=\max \{ f(v \ell x) : v \in B^N_\delta(e),\; \ell \in B^{MA}_\varepsilon\left( \alpha (\gamma_0) \right) \} \]
satisfies
\[ \frac{\int  f_\bot d\mu}{\int f d\mu} <\eta_1. \]

Fix some open precompact set $ \Omega \subset \GmodGamma $ with $ \mu(\Omega) > 0 $ and let $ \tOmega $ be any compact set containing a neighborhood of 
\[ \bigcup_{\ell \in B_R^{MA}} \ell^{-1} \Omega. \]

Let $ \eta_2 > 0 $ be sufficiently small, to be made explicit later. By the ratio ergodic theorem we may choose a constant $ T_1 > 0 $ large enough so that
\begin{equation}\label{def_tOmega_1}
\tOmega_1 = \left\{ x \in \tOmega \;\middle|\; 
\mathcal{R}(f_\bot,f,x,I^{a_t})<\eta_1
\;\;  \forall t \geq T_1 \right\} 
\end{equation}
satisfies 
\[ \frac{\mu(\tOmega \smallsetminus \tOmega_1)}{\mu(\tOmega)}<\frac{1}{4}\eta_2. \]
We additionally require that $ T_1 $ be large enough so that for any $ n \in B_R^N $ and $ \ell \in B_R^{MA} $
\begin{equation}\label{tech_Omeg2tOmega}
(n^{a_{T_1}} \cdot \ell)^{-1} \Omega \subseteq \tOmega
\end{equation}
and
\begin{equation}\label{tech_N_error}
n^{a_{T_1}} \in B^N_{\delta} 
\end{equation}
with $ R $ as in \cref{lma-Phi}.

Since $ \asymneighborhood \cap V_0 \neq \emptyset $ there exists some $ T_2 > T_1 $ for which the set
\[ \tOmega_2 = \left\{ x=g\Gamma \in \tOmega_1 \; \middle|\; \bigcup_{t \in [T_1,T_2]} a_{-t}g\Gamma g^{-1}a_t \cap V_0 \neq \emptyset \right\} \]
satisfies
\[ \frac{\mu(\tOmega \smallsetminus \tOmega_2)}{\mu (\tOmega)}< \min \left\{\frac{\eta_2}{2},\frac{\mu(\Omega)}{4\mu(\tOmega)}\right\}. \]
This condition together with the ratio ergodic theorem ensures the existence of a point $ x_0 \in \GmodGamma $ and a constant $ L > e^{T_2} $ satisfying
\begin{equation}\label{tech_ineq_x0_1}
\mathcal{R}(\chi_{\tOmega_2},\chi_{\tOmega},x_0,B^U_L)>1-\eta_2 
\end{equation}
\begin{equation}\label{tech_ineq_x0_2}
\mathcal{R}(\chi_{\Omega \cap \tOmega_2},\chi_{\tOmega},x_0,B^U_L) > \frac{\mu(\Omega)}{2\mu(\tOmega)} 
\end{equation}

A key component of Hochman's proof of the ratio ergodic theorem is a multi-dimensional variant of the Chacon-Ornstein Lemma \cite[Thm.~1.2]{Hochman2010} stating that for any fixed $ r > 0 $:
\begin{equation}
\frac{\int_{\partial_{(r)}B^U_R} h_1 (ux)du}{\int_{B^U_R} h_2 (ux)du} \underset{R\to \infty}{\longrightarrow} 0 
\end{equation}
where $ \partial_{(r)} B^U_R=B^U_{R}\smallsetminus B^U_{R-r} $ and $ h_1,h_2 \in L^\infty (\mu) \cap L^1(\mu) $ with $ \int h_2 d \mu \neq 0 $. We may thus choose $ L $ large enough so as to additionally satisfy
\begin{equation}\label{tech_ineq_x0_Chacon-Ornstein}
\int_{\partial_{(R^\prime e^{T_2})}B^U_L} \chi_{\Omega \cap \tOmega_2}(u x_0)du < \frac{1}{2} \int_{B^U_L} \chi_{\Omega \cap \tOmega_2}(u x_0)du
\end{equation}
where $ R^\prime=R+\text{radius}(D) $.

In what follows, we shall focus our attention on the orbit $ \{u x_0\}_{u\in B^U_L} $ of $ x_0 $. Let $ m_U $ denote Haar measure on $ U $. Given a set $ E $ in $ G $ we denote 
\[ O_E=\{u \in B^U_L | ux_0 \in E \}. \]
Sets of particular interest are $ O_{\Omega} $, $ O_{\tOmega} $ and $ O_{\tOmega_2} $.

Assumption (\ref{tech_ineq_x0_1}) implies that out of the time in which $ x_0 $'s $ U $-orbit spends inside $ \tOmega $, close to a full proportion of that time is spent inside the set $ \tOmega_2 $, i.e. $ \nicefrac{m_U(O_{\tOmega} \smallsetminus O_{\tOmega_2})}{ m_U(O_{\tOmega})} $ is small (less than $ \eta_2 $). The heart of the proof is an argument employing the map $ \Phi_U $ to ``push'' an impossible proportion of $ O_\Omega $ into $ O_{\tOmega} \smallsetminus O_{\tOmega_2} $. 

We will show there exists a constant $ C_0 > 0 $ independent of $ \eta_2 $ for which
\begin{equation}\label{key_prop}
m_U (O_{\tOmega} \smallsetminus O_{\tOmega_2})
\geq C_0 \cdot m_U(O_{\Omega} \cap O_{\tOmega_2})
\end{equation}
implying
\[ \eta_2 \underset{(\ref{tech_ineq_x0_1})}{>} \frac{m_U (O_{\tOmega} \smallsetminus O_{\tOmega_2})}{m_U (O_{\tOmega})} \geq C_0 \frac{m_U (O_{\Omega} \cap O_{\tOmega_2})}{m_U (O_{\tOmega})} \underset{(\ref{tech_ineq_x0_2})}{>} C_0 \frac{\mu(\Omega)}{2\mu(\tOmega)}. \]
Taking $ \eta_2 $ small enough yields the desired contradiction.

In order to prove (\ref{key_prop}) we construct a family of open sets contained in $ O_{\tOmega} \smallsetminus O_{\tOmega_2} $ and use a covering argument to bound the total mass of these sets.

Fix $ g_0 \in G $ for which $ x_0=g_0 \Gamma $. By definition, for every $ u_0 \in O_{\tOmega_2} $ there exists $ t_{u_0} \in [T_1,T_2] $ and $ \gamma_{u_0} \in G $ with
\begin{equation}\label{tech_gamma_conjugation}
\gamma_{u_0} \in \left(a_{-t_{u_0}} (u_0 g_0)\Gamma (u_0 g_0)^{-1}a_{t_{u_0}}\right) \cap V_0
\end{equation}
with $ V_0 \subset G $ as in \cref{lma-Phi}. 
To the element $ u_0 \in O_{\tOmega_2} $ we associate the following auxiliary function $ \Psi_{u_0}: D \to U $ defined by
\[ \Psi_{u_0}(v)=\left(\Phi_U\left(\gamma_{u_0}^k,v \right)\right)^{a_{t_{u_0}}}u_0. \]

\begin{lemma}\label{lma_case1}
	For all $ u_0 \in O_{\tOmega_2} $
	\begin{equation}\label{tech_J_disjoint}
	\Psi_{u_0}(D) \cap O_{\tOmega_2} = \emptyset.
	\end{equation}
\end{lemma}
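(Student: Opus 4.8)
The plan is to unwind the definitions and see that $\Psi_{u_0}(v)x_0$ is exactly the point $u_0 g_0 \Gamma$ acted on by a copy of $\alpha(\gamma_0)$ (up to the small errors tracked in \cref{lma-Phi}), and then invoke \cref{lma_witness} to conclude that such points cannot lie in $\tOmega_2$, since points of $\tOmega_2 \subseteq \tOmega_1$ are generic and their images under elements of $B^{MA}_\varepsilon(\alpha(\gamma_0))$ (together with a small $N$-perturbation) are ``$f_\bot$-heavy'' rather than ``$f$-heavy''. Concretely, for $u_0 \in O_{\tOmega_2}$ with witness $t_{u_0} \in [T_1,T_2]$ and $\gamma_{u_0} \in V_0$ satisfying \eqref{tech_gamma_conjugation}, I would write $\gamma := \gamma_{u_0}$, $t := t_{u_0}$, and compute, for $v \in D$,
\[
\Psi_{u_0}(v) x_0 = \bigl(\Phi_U(\gamma^k,v)\bigr)^{a_t} u_0 g_0 \Gamma
= a_t\,\Phi_U(\gamma^k,v)\,a_{-t}\,u_0 g_0 \Gamma.
\]
Using \eqref{tech_gamma_conjugation}, the element $(a_{-t}u_0g_0)^{-1}\gamma^k(a_{-t}u_0g_0)$ lies in $\Gamma$, so $a_{-t}u_0g_0\Gamma = \gamma^k a_{-t}u_0g_0\Gamma$ as a point of $G/\Gamma$; hence, applying the reparameterization identity for $\Phi$ to the element $\gamma^k$ acting at the point $a_{-t}u_0g_0\Gamma$,
\[
\Phi_U(\gamma^k,v)\,a_{-t}u_0g_0\Gamma
= \Phi_N(\gamma^k,v)^{-1}\Phi_{MA}(\gamma^k,v)^{-1}\,v\,a_{-t}u_0g_0\Gamma
\]
— more cleanly, $v\,a_{-t}u_0g_0\Gamma = \Phi_N(\gamma^k,v)\Phi_{MA}(\gamma^k,v)\Phi_U(\gamma^k,v)\,a_{-t}u_0g_0\Gamma$, so that $\Phi_U(\gamma^k,v)\,a_{-t}u_0g_0\Gamma = \Phi_{MA}(\gamma^k,v)^{-1}\Phi_N(\gamma^k,v)^{-1}\,v\,a_{-t}u_0g_0\Gamma$. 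Conjugating back by $a_t$ and using that $a_t$ normalizes $N$ and centralizes $MA$, one obtains
\[
\Psi_{u_0}(v)x_0 = \Phi_{MA}(\gamma^k,v)^{-1}\,\bigl(\Phi_N(\gamma^k,v)^{-1}\bigr)^{a_t}\,(v\,u_0)\,g_0\Gamma,
\]
the precise bookkeeping of which factor sits on which side being the one routine point to get right.

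Next I would feed in the estimates of \cref{lma-Phi}: item (5) gives $\Phi_N(\gamma^k,v) \in B_R^N$ and $\Phi_{MA}(\gamma^k,v) \in B_R^{MA}$, item (7) (applied to $\gamma^k$, or rather the analogue $\Phi_{MA}(\gamma^k,D)\subseteq B_\varepsilon^{MA}(\alpha(\gamma_0))$ that the proof of item (7) yields for the $k$-th power as well — this is built into the construction) gives $\Phi_{MA}(\gamma^k,v) \in B_\varepsilon^{MA}(\alpha(\gamma_0))$, and since $t \geq T_1$, \eqref{tech_N_error} upgrades $\bigl(\Phi_N(\gamma^k,v)^{-1}\bigr)^{a_t}$ to an element of $B_\delta^N$. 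Thus $\Psi_{u_0}(v)x_0 = v' \,\ell'\, (v u_0) g_0 \Gamma$ with $v' \in B_\delta^N$ and $\ell' \in B_\varepsilon^{MA}(\alpha(\gamma_0))$. Now suppose toward a contradiction that some $\Psi_{u_0}(v)x_0 \in \tOmega_2 \subseteq \tOmega_1$. Points of $\tOmega_1$ are $\mu$-generic (they were chosen from the generic set $K$ in the construction behind \cref{lma_witness}, via $\tOmega$), hence by the very disjointness property $K_\bot \cap K = \emptyset$ established in the proof of \cref{lma_witness} — applied to the point $(v u_0)g_0\Gamma$ and the normalizing element $v'\ell' \in B_\delta^N \cdot B_\varepsilon^{MA}(\alpha(\gamma_0))$ — the point $v'\ell'(vu_0)g_0\Gamma$ cannot itself be generic with respect to the complementary set, giving the contradiction. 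Equivalently and more in line with how the lemma is stated: if $\Psi_{u_0}(v)x_0$ were in $\tOmega_2$, then $(vu_0)g_0\Gamma$ and $\Psi_{u_0}(v)x_0$ would be two generic points related by an element of $B_\delta^N B_\varepsilon^{MA}(\alpha(\gamma_0))$, forcing that element to stabilize $[\mu]$, i.e. $B_\varepsilon^{MA}(\alpha(\gamma_0)) \cap H_\mu \neq \emptyset$, contradicting our standing assumption.

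The main obstacle, and the step deserving the most care, is the algebraic manipulation showing that $\Psi_{u_0}(v)x_0$ really is of the form (small $N$)$\cdot$(near-$\alpha(\gamma_0)$ in $MA$)$\cdot((vu_0)g_0\Gamma)$: one must track the order of the factors $N$, $MA$, $U$ in the $\iota$-decomposition, use $U$-invariance of nothing (we are not quotienting by $U$ here — $vu_0$ genuinely shifts the base point inside the $U$-orbit, which is fine and in fact the whole point of using $\Psi_{u_0}$ rather than $\ell_n$ as in the special case), and correctly exploit that $a_t$ centralizes $MA$ but \emph{expands} $N$-errors in the wrong direction — which is precisely why \eqref{tech_N_error} was imposed with $T_1$ large, namely $n^{a_{T_1}} \in B_\delta^N$ for $n \in B_R^N$ (note $a_t$ contracts $N$ for $t>0$ in the convention where it expands $U$). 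Once the shape of $\Psi_{u_0}(v)x_0$ is pinned down, the conclusion is immediate from \cref{lma_witness} and the contradiction hypothesis $B_\varepsilon^{MA}(\alpha(\gamma_0)) \cap H_\mu = \emptyset$; no quantitative input is needed at this stage, so the lemma is really just ``definitions plus \cref{lma-Phi} plus \cref{lma_witness}.''
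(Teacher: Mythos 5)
Your proposal has two genuine gaps, and together they mean the argument as written does not prove the lemma. First, you misapply \cref{lma-Phi}: item (7) controls $\Phi_{MA}(\gamma,\cdot)$ on $E=I\cdot\Phi_U(\gamma^k,D)$ for a \emph{single} application of $\gamma$; it is not ``built into the construction'' for $\gamma^k$, and indeed it cannot be, since $\gamma^k$ is conjugate to (roughly) $\alpha(\gamma_0)^k$, which is far from $\alpha(\gamma_0)$. For $\Phi_{MA}(\gamma^k,v)$ the lemma only gives the crude bound $B_R^{MA}$ of item (5). So your claimed normal form $\Psi_{u_0}(v)x_0=v'\ell'\,(v u_0)g_0\Gamma$ with $\ell'\in B^{MA}_\varepsilon(\alpha(\gamma_0))$ is false (also, the $U$-factor should be $v^{a_t}u_0$, not $vu_0$). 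The role of $\gamma^k$ in the construction is only to move the base point to $v'=\Phi_U(\gamma^k,v)$ near the attracting fixed point so that, by item (8), $\Phi_U(\gamma,Iv')\subseteq Iv'$; closeness to $\alpha(\gamma_0)$ in $MA$ is then obtained from item (7) applied to $\gamma$ at the points $uv'$, $u\in I$.

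Second, the concluding ``soft'' step is invalid. Membership of a point in $\tOmega_2\subseteq\tOmega_1$ is, by \eqref{def_tOmega_1}, a statement about the ratios $\mathcal{R}(f_\bot,f,\cdot,I^{a_s})$ for $s\geq T_1$; it is not $\mu$-genericity, and $\tOmega,\tOmega_1,\tOmega_2$ have nothing to do with the compact set $K$ used inside the proof of \cref{lma_witness}. Nothing at all is known about the companion point $v^{a_t}u_0x_0$, and two individual points related by an element of $B^N_\delta\, B^{MA}_\varepsilon(\alpha(\gamma_0))$ (which, because of the nontrivial $N$-part, does not even normalize $U$) cannot force that element into $H_\mu$. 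The paper's proof is necessarily quantitative at this stage: one applies the reparameterization identity with the single element $\gamma$ at all points $uv'$, $u\in I$, uses items (6)--(7) of \cref{lma-Phi} together with \eqref{tech_N_error} to get the pointwise bound $f(u^{a_t}\Psi_{u_0}(v)x_0)\leq f_\bot\bigl((\Phi_U(\gamma,uv'))^{a_t}u_0x_0\bigr)$, and then uses the Jacobian bound of item (4) and the self-mapping property of item (8) to convert this into the integral estimate $\mathcal{R}(f_\bot,f,\Psi_{u_0}(v)x_0,I^{a_t})\geq c$, which contradicts membership in $\tOmega_1$ because $\eta_1<\nicefrac{c}{2}$. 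This integral/change-of-variables step over the ball $I^{a_t}$ --- the actual content of the lemma, and the reason the constants $c$ and $\eta_1$ appear --- is entirely missing from your proposal, so the lemma is not ``just definitions plus \cref{lma-Phi} plus \cref{lma_witness}.''
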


The proof of this lemma follows along the lines of \cref{thm_special_case} given in \S\ref{Section_Special_Case}. Recall that points in $ O_{\tOmega_2} $ have a ``good'' ratio ergodic average, in the sense of (\ref{def_tOmega_1}).  In \cref{thm_special_case}, we have shown that given a $ \mu $-generic point whose geodesic past at time $ t $ is close to a closed geodesic of some approximate type, we may reparameterize its horospherical orbit to deduce it has a ``bad'' ratio ergodic average over a ball in $ U $ of size $ e^t $. In the case where $ t $ may be taken to be arbitrarily large we derive a direct contradiction to the ratio ergodic theorem. 

The underlying reason that was implicitly behind the argument establishing that the ratio ergodic average on a ball of size $ e^t $ is ``bad'' in this case is the fact that $ \Phi_U(\gamma_{u_0},\cdot) $ maps a ball around the identity into itself whenever $ \gamma_{u_0} $ is sufficiently close to a contracting normalizing element of $ U $. For a general $ \gamma_{u_0} $ this is not necessarily true. 

By the way we have set things up, balls centered at points in $ \Psi_{u_0}(D) $ are mapped into a subset of themselves by $ \Phi_U(\gamma_{u_0},\cdot) $, in which case ``bad'' ratio ergodic averages are implied, that is to say these points are not contained in $ O_{\tOmega_2} $, as claimed in (\ref{tech_J_disjoint}).

\begin{figure}[b]
	\begin{tikzpicture}[scale=0.8]
	
	\node at (-5.35,-5.15) {\footnotesize \textbullet $ u_0 $};
	
	\draw[fill=black, fill opacity=0.04]  (-5.5,-5) ellipse (1.5 and 1.5);
	\node at (-7.1,-6) {$Du_0$};
	
	\node at (-6.2,-4.5) {\footnotesize \textbullet};
	\draw[opacity=0.4] (-5.55,-5.15) --  (-6.2,-4.5);
	\node at (-5.75,-4.65) {\tiny $ v $};
	
	\node at (-0.4,-1.55) {\footnotesize \textbullet $w$};
	
	\draw[opacity=0.5] (-0.62,-1.46) node {} ellipse (2.5 and 2.5);
	\node at (-2.75,-3.5) {$I^{a_t} w$};
	
	\draw[fill=black, fill opacity=0.04]   (-0.4,-1.6) ellipse (0.5 and 0.4);
	\node at (-0.25,-2.25) {\footnotesize $\Psi_{u_0}(D)$};
	
	\draw[dashed] (-0.6,-1.5) arc (40:200:3.08);
	\node at (-5,-0.55) {$ \Psi_{u_0} $};
	
	\end{tikzpicture}
	\caption{Elements in \cref{lma_case1} associated to a point $ u_0 \in O_{\tOmega_2} $.}
\end{figure}
\begin{proof}[Proof of \cref{lma_case1}]
	Fix $ u_0 \in O_{\tOmega_2} $ and set $ \gamma=\gamma_{u_0} $ and $ t = t_{u_0} $. By item (1) of \cref{lma-Phi} the map $ \Phi(\gamma^k,\cdot) $ is well defined on $ D $. Fix $ v \in D $ and denote
	\begin{equation}\label{tech_u1_def}
	w=\Psi_{u_0}(v)=(\Phi_U(\gamma^k,v))^{a_t}u_0.
	\end{equation}
	We would like to prove $ w \notin O_{\tOmega_2} $. To that end, it would suffice to show
	\[ \mathcal{R}(f_\bot,f,w x_0,I^{a_t}) > \eta_1. \]
	
	By (\ref{tech_gamma_conjugation}), the element
	\[ \gamma^\prime = (u_0 g_0)^{-1} a_t \gamma a_{-t} (u_0 g_0) \]
	is contained in $ \Gamma $. For any $ v $ in the domain of definition of $ \Phi_U(\gamma,\cdot) $ we have
	\begin{align}\label{eq_gamma2Phi}
	v^{a_t} u_0 g_0 \gamma^\prime &= a_t v a_{-t} u_0 g_0 \gamma^\prime= a_t (v \gamma) a_{-t} u_0 g_0= \nonumber\\
	&= a_t (\Phi_N(\gamma,v) \cdot \Phi_{MA}(\gamma,v) \cdot \Phi_U(\gamma,v)) a_{-t} u_0 g_0 = \\
	&= (\Phi_N(\gamma,v))^{a_t} \cdot \Phi_{MA}(\gamma,v) \cdot (\Phi_U(\gamma,v))^{a_t} \cdot u_0 g_0 \nonumber.
	\end{align}
	
	Denote
	\[ v'=\Phi_U(\gamma^k,v). \]
	By item (2) of \cref{lma-Phi} the map $ \Phi(\gamma,\cdot) $ is well defined on $ I \cdot v' $. Hence for any $ u \in I $ and $ w $ as in (\ref{tech_u1_def}), equation (\ref{eq_gamma2Phi}) applied to $ uv' $ implies
	\begin{align}\label{tech_reparam}
	u^{a_t} w x_0 =& (uv')^{a_t} u_0 x_0 \\
	=& (\Phi_N(\gamma,u v'))^{a_t} \cdot \Phi_{MA}(\gamma,u v') \cdot (\Phi_U(\gamma,u v'))^{a_t} \cdot u_0 x_0 \nonumber.
	\end{align}
	\cref{lma-Phi} ensures the different terms of (\ref{tech_reparam}) are well controlled. Item (6) states that $ \Phi_N(\gamma,u v') \in B^N_R $ hence by the choice of $ T_1 $ in (\ref{tech_N_error}) we are ensured that conjugation by $ a_t $ admits
	\[ \left(\Phi_N(\gamma,u v')\right)^{a_t} \in B_\delta^N. \]
	On the other hand item (7) of \cref{lma-Phi} gives that
	\[ \Phi_{MA}(\gamma,u v') \in B^{MA}_\varepsilon(\alpha(\gamma_0)). \]
	Recall the definition of the function $ f_\bot $:
	\[ f_\bot(x)=\max \{ f(v \ell x) : v \in B^N_\delta(e),\; \ell \in B^{MA}_\varepsilon\left( \alpha (\gamma_0) \right) \}. \]
	We thus see that
	\begin{align}\label{tech_f2fbot}
	f(u^{a_t} w x_0) \leq f_\bot\left(\left(\Phi_U(\gamma,u v')\right)^{a_t} \cdot u_0 x_0)\right).
	\end{align}
	
	In order to estimate the ratio ergodic average along $ I^{a_t}w x_0 $ we need some control over $ \Phi_U $. Item (8) of \cref{lma-Phi} applied to the point $ v' $ states that
	\begin{equation}\label{tech_PhiUcontract}
	\Phi_U(\gamma,I v') \subseteq I v'
	\end{equation}
	and item (4) states that
	\begin{equation}\label{tech_PhiUJacobian}
	c<|\textbf{Jac}_{u v'}(\Phi_U(\gamma,\cdot))|.
	\end{equation}
	In a manner analogous to the the proof of \cref{thm_special_case}, we arrive at the following calculation:
	\begin{align*}
	\int_{I}f(u^{a_t} w x_0)du & \overset{(\ref{tech_f2fbot})}{\leq}\int_{I}f_\bot\left(\left(\Phi_U(\gamma,u v')\right)^{a_t} \cdot u_0 x_0\right)du \leq \\
	& \overset{(\ref{tech_PhiUJacobian})}{\leq}  c^{-1} \int_{\Phi_U(\gamma,I v')}f_\bot(u^{a_t} u_0 x_0)du \leq \\
	& \overset{(\ref{tech_PhiUcontract})}{\leq}  c^{-1} \int_{Iv'}f_\bot(u^{a_t} u_0 x_0)du = \\
	& \overset{(\ref{tech_u1_def})}{=} c^{-1} \int_{I}f_\bot(u^{a_t} w x_0)du
	\end{align*}
	hence
	\begin{equation*}
	1=\mathcal{R}(f,f,w x_0,I^{a_t}) \leq c^{-1} \cdot \mathcal{R}(f_\bot,f,w x_0,I^{a_t}).
	\end{equation*}
	Since $ \eta_1<\nicefrac{c}{2} $ this implies $ w \notin O_{\tOmega_2} $ as required.
\end{proof}

\medskip
As a corollary of the lemma above we obtain the following:
\begin{lemma}
	For any $ u_0 \in O_{\tOmega_2} $
	\begin{equation}\label{tech_Jprime_contained}
	\Psi_{u_0}\left(D \cap (O_\Omega u_0^{-1})^{a_{-t}}\right) \subseteq O_{\tOmega} \smallsetminus O_{\tOmega_2}
	\end{equation}
	where $ t=t_{u_0} $ as above.
\end{lemma}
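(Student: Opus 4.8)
The plan is to read \eqref{tech_Jprime_contained} as two separate assertions about a given $v \in D \cap (O_\Omega u_0^{-1})^{a_{-t}}$: that $\Psi_{u_0}(v) \notin O_{\tOmega_2}$ and that $\Psi_{u_0}(v) \in O_{\tOmega}$. The first is free: the set in question lies inside $D$, so \cref{lma_case1} already gives $\Psi_{u_0}(v) \notin O_{\tOmega_2}$. Everything therefore reduces to proving $\Psi_{u_0}(v) \in O_{\tOmega}$, i.e.\ $\Psi_{u_0}(v) \in B^U_L$ and $\Psi_{u_0}(v)x_0 \in \tOmega$, under the hypotheses $v \in D$ and $v^{a_t}u_0 \in O_\Omega$ — the latter being precisely what $v \in (O_\Omega u_0^{-1})^{a_{-t}}$ asserts, and it unpacks to $v^{a_t}u_0 \in B^U_L$ together with $v^{a_t}u_0 x_0 \in \Omega$.

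To get at $\Psi_{u_0}(v)$ I would rerun the reparameterization computation from the proof of \cref{lma_case1}, but with $\gamma^k$ in place of $\gamma$. Writing $\gamma = \gamma_{u_0}$, $t = t_{u_0}$, $x_0 = g_0\Gamma$: since $\gamma$ lies in the subgroup $a_{-t}(u_0g_0)\Gamma(u_0g_0)^{-1}a_t$, so does $\gamma^k$, hence $\gamma^{(k)} := (u_0g_0)^{-1}a_t\gamma^k a_{-t}(u_0g_0) \in \Gamma$; substituting $v\gamma^k = \Phi_N(\gamma^k,v)\Phi_{MA}(\gamma^k,v)\Phi_U(\gamma^k,v)$ (defined on $D$ by item (1) of \cref{lma-Phi}) and conjugating by $a_t$, which centralizes $MA$ and normalizes $N$ and $U$, one lands exactly at the identity \eqref{eq_gamma2Phi}:
\[ v^{a_t}u_0 x_0 \;=\; \bigl(\Phi_N(\gamma^k,v)\bigr)^{a_t}\cdot\Phi_{MA}(\gamma^k,v)\cdot\Psi_{u_0}(v)\,x_0 \]
in $\GmodGamma$, using $\Psi_{u_0}(v)=(\Phi_U(\gamma^k,v))^{a_t}u_0$. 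Abbreviating $n=\Phi_N(\gamma^k,v)$, $\ell=\Phi_{MA}(\gamma^k,v)$, this rearranges to $\Psi_{u_0}(v)x_0 = (n^{a_t}\ell)^{-1}(v^{a_t}u_0x_0) \in (n^{a_t}\ell)^{-1}\Omega$. Now item (5) of \cref{lma-Phi} gives $n\in B^N_R$, $\ell\in B^{MA}_R$; since $N$ is the stable horospherical subgroup and $t\ge T_1$, conjugation by $a_{t-T_1}$ keeps $B^N_R$ inside itself, so $n^{a_t}=(n^{a_{t-T_1}})^{a_{T_1}}$ with $n^{a_{t-T_1}}\in B^N_R$, and \eqref{tech_Omeg2tOmega} then forces $(n^{a_t}\ell)^{-1}\Omega\subseteq\tOmega$; hence $\Psi_{u_0}(v)x_0\in\tOmega$. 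For the ball membership, $\Phi_U(\gamma^k,v)\in B^U_R$ and $v\in D$ give $\|\Phi_U(\gamma^k,v)v^{-1}\|_U\le R' = R+\text{radius}(D)$, and since $a_t$ with $t\le T_2$ expands $U$ by at most $e^{T_2}$ we get $\|\Psi_{u_0}(v)(v^{a_t}u_0)^{-1}\|_U\le R'e^{T_2}$; as $v^{a_t}u_0\in B^U_L$, this keeps $\Psi_{u_0}(v)$ inside $B^U_L$ once one further restricts to $v^{a_t}u_0\in B^U_{L-R'e^{T_2}}$, and the Chacon--Ornstein estimate \eqref{tech_ineq_x0_Chacon-Ornstein} — arranged with precisely this width — guarantees that this extra restriction discards only a negligible fraction of the mass in the covering argument to follow. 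Combining the two assertions gives $\Psi_{u_0}(v)\in O_{\tOmega}\smallsetminus O_{\tOmega_2}$, which is \eqref{tech_Jprime_contained}.

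The reparameterization identity and the reduction of the $N$-error to \eqref{tech_Omeg2tOmega} via the positive-time contraction of $N$ are routine repetitions of the mechanism in \cref{lma_case1} (in fact slightly simpler here, since only $\Phi(\gamma^k,\cdot)$ on $D$ is needed, not the two-step version with $E=I\cdot\Phi_U(\gamma^k,D)$). The one delicate point — and the place where the earlier choices are cashed in — is the bound $\Psi_{u_0}(v)\in B^U_L$: because the time-$t$ conjugation expands $U$ by as much as $e^{T_2}$, the map $\Psi_{u_0}$ can move a point of $O_\Omega$ across $\partial B^U_L$ by up to $R'e^{T_2}$, so one must verify that this boundary layer is exactly the one neutralized by \eqref{tech_ineq_x0_Chacon-Ornstein}. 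This is bookkeeping on radii rather than a genuinely new difficulty.
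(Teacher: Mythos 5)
Your argument is correct and follows essentially the paper's own proof: the same reparameterization identity \eqref{eq_gamma2Phi} applied with $\gamma^k$ on $D$, the bounds $\Phi_N(\gamma^k,v)\in B^N_R$, $\Phi_{MA}(\gamma^k,v)\in B^{MA}_R$ from item (5) of \cref{lma-Phi} combined with \eqref{tech_Omeg2tOmega} (via the positive-time contraction of $N$) to place $\Psi_{u_0}(v)x_0$ in $\tOmega$, and \cref{lma_case1} for disjointness from $O_{\tOmega_2}$. The only divergence is your explicit treatment of the constraint $\Psi_{u_0}(v)\in B^U_L$ implicit in the definition of $O_{\tOmega}$: the paper's proof of this lemma passes over that point and defers it to \cref{lma_cover}, where the centers are restricted to $B^U_{L-R'e^{T_2}}$ and the union of the $J_{u_0}$ is intersected with $B^U_L$, with \eqref{tech_ineq_x0_Chacon-Ornstein} absorbing the lost boundary layer --- exactly the mechanism you describe, so your reading matches how the statement is actually used.
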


\begin{proof}
	Let $ v \in D \cap (O_\Omega u_0^{-1})^{a_{-t}} $. By the definition of $ O_\Omega $ we have
	\[ v^{a_t} u_0 x_0 \in O_\Omega x_0 \subset \Omega. \]
	On the other hand, as in (\ref{eq_gamma2Phi}), we know
	\begin{equation}
	v^{a_t} u_0 x_0 = (\Phi_N(\gamma^k,v))^{a_t} \cdot \Phi_{MA}(\gamma^k,v) \cdot (\Phi_U(\gamma^k,v))^{a_t} u_0 x_0
	\end{equation}
	therefore
	\[ \left(\Phi_U(\gamma^k,v)\right)^{a_t} u_0 x_0 \in  \left((\Phi_N(\gamma^k,v))^{a_t}\cdot \Phi_{MA}(\gamma^k,v)\right)^{-1} \Omega. \]
	
	Denote
	\[ n_v=\Phi_{N}(\gamma^k,v) \text{ and } \ell_v=\Phi_{MA}(\gamma^k,v). \]
	By item (5) in \cref{lma-Phi} we have $ n_v \in B_R^N, \ell_v \in B^{MA}_R $. The construction of $ \tOmega $ and the choice of $ T_1 $ in (\ref{tech_Omeg2tOmega}) ensure $ \left( n_v^{a_t}\cdot \ell_v \right)^{-1} \Omega \subseteq \tOmega $. Hence we may conclude
	\begin{equation*}
	\Psi_{u_0}(v) \in O_{\tOmega}.
	\end{equation*}
	Since $ \Psi_{u_0}\left(D \cap (O_\Omega u_0^{-1})^{a_{-t}}\right) $ is contained in $ \Psi_{u_0}(D) $ we deduce by \cref{lma_case1} it is disjoint from $ O_{\tOmega_2} $, implying the claim.
\end{proof}
We have thus shown that to every point in $ O_{\tOmega_2} $ corresponds an open set contained in $ O_{\tOmega} \smallsetminus O_{\tOmega_2} $. Our aim is to show these sets add up to a non-negligible $ m_U $ measure. This is the content of the following covering argument.

\medskip
Given any $ u_0 \in O_{\tOmega_2} $ we define
\[ J_{u_0}=\Psi_{u_0}\left(D^{a_{\ln (\nicefrac{c}{2})}} \cap (O_\Omega u_0^{-1})^{a_{-t_{u_0}}}\right). \]
Note that $ D^{a_{\ln (\nicefrac{c}{2})}} \subset D $, since $ c \in (0,1) $, and therefore 
\[ J_{u_0} \subseteq \Psi_{u_0}\left(D \cap (O_\Omega u_0^{-1})^{a_{-t_{u_0}}}\right) \subseteq O_{\tOmega} \smallsetminus O_{\tOmega_2}. \]

\begin{lemma}\label{lma_cover}
	There exists a constant $ C_0 > 0 $, depending only on the group $ U $, the set $ D $ and the constants $ c $ and $ R $ of \cref{lma-Phi}, satisfying
	\[ m_U\left( \bigcup_{u_0 \in O_{\tOmega_2}} J_{u_0} \cap B_{L}^U \right) > C_0 \cdot m_U(O_{\Omega} \cap O_{\tOmega_2}). \]
\end{lemma}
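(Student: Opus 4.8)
The plan is to establish the measure bound by a Vitali-type covering argument on the $U$-orbit of $x_0$. The key geometric feature to exploit is item (3) of \cref{lma-Phi}: the map $\Phi_U(\gamma^k,\cdot)$, up to translation by $\Phi_U(\gamma^k,e)$, distorts balls around $e$ by a bounded factor (sandwiched between $B^U_{cr}$ and $B^U_r$). Combined with item (4), which controls its Jacobian from below by $c$, this means that each set $J_{u_0}=\Psi_{u_0}(D^{a_{\ln(c/2)}}\cap(O_\Omega u_0^{-1})^{a_{-t_{u_0}}})$ is, as a subset of $U$, a mildly distorted copy of a piece of $O_\Omega$ scaled down by a factor $\sim e^{t_{u_0}}$ with $t_{u_0}\in[T_1,T_2]$, hence by a factor bounded above and below by constants depending only on $T_1,T_2$ (which are already fixed) and on the bounds in \cref{lma-Phi}. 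First I would record precisely how $\Psi_{u_0}$ transforms $m_U$: since $\Psi_{u_0}(v)=(\Phi_U(\gamma^k,v))^{a_{t_{u_0}}}u_0$, conjugation by $a_{t_{u_0}}$ scales $m_U$ by $e^{(d-1)t_{u_0}}$ and right translation by $u_0$ preserves $m_U$, so $m_U(\Psi_{u_0}(E))\asymp e^{(d-1)t_{u_0}}\int_E|\mathbf{Jac}_v\Phi_U(\gamma^k,\cdot)|\,dv$, which by item (4) is $\geq c\,e^{(d-1)t_{u_0}}m_U(E)$ and $\leq e^{(d-1)t_{u_0}}m_U(E)$.

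Next, I would set up the covering. For each $u_0\in O_{\tOmega_2}$, let $B_{u_0}$ be a ball in $U$ of radius $\sim \mathrm{radius}(D)\cdot e^{-t_{u_0}}$ centered at $u_0$ that contains $\Psi_{u_0}(D)$ (this uses item (3) to see that $\Phi_U(\gamma^k,D)$ lies in a fixed ball $D$ around $e$, so its $a_{t_{u_0}}$-conjugate lies in a ball of comparable scaled radius around $e$, and then the translate by $u_0$ lands near $u_0$). The family $\{B_{u_0}\}_{u_0\in O_{\tOmega_2}}$ covers $O_{\tOmega_2}$ (each $u_0\in B_{u_0}$) and all radii lie between two positive constants times the fixed quantities $e^{-T_2}$ and $e^{-T_1}$. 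By the basic covering lemma (Vitali/$5r$-covering), extract a countable subfamily $\{B_{u_0^{(i)}}\}_i$ that is pairwise disjoint with $\bigcup_i 5B_{u_0^{(i)}}\supseteq O_{\tOmega_2}$. Because the radii are uniformly comparable, disjointness of the $B_{u_0^{(i)}}$ already forces a bounded-overlap-type control, and in particular $\sum_i m_U(B_{u_0^{(i)}})\geq 5^{-(d-1)}m_U(O_{\tOmega_2})\gtrsim m_U(O_\Omega\cap O_{\tOmega_2})$ using $O_\Omega\cap O_{\tOmega_2}\subseteq O_{\tOmega_2}$.

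Now I would turn the disjointness of the selected balls into disjointness (up to bounded overlap) of the sets $J_{u_0^{(i)}}$, and lower bound each $m_U(J_{u_0^{(i)}})$. For the lower bound: $J_{u_0^{(i)}}=\Psi_{u_0^{(i)}}(D^{a_{\ln(c/2)}}\cap(O_\Omega (u_0^{(i)})^{-1})^{a_{-t}})$; applying the Jacobian bound above, $m_U(J_{u_0^{(i)}})\geq c\,e^{(d-1)t}m_U(D^{a_{\ln(c/2)}}\cap(O_\Omega (u_0^{(i)})^{-1})^{a_{-t}})$, and the latter $m_U$-measure, after undoing the $a_{-t}$-conjugation (which scales $m_U$ by $e^{-(d-1)t}$), equals $e^{-(d-1)t}$ times the measure of $O_\Omega\cap (u_0^{(i)} D^{a_{\ln(c/2) + t}})$ — i.e., the portion of $O_\Omega$ inside a ball of radius $\sim \mathrm{radius}(D)\cdot e^{-t}$ around $u_0^{(i)}$, comparable to a fixed fraction of $m_U(B_{u_0^{(i)}})$. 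This is exactly where condition \eqref{tech_ineq_x0_Chacon-Ornstein} enters: it guarantees that a definite proportion of the $O_\Omega\cap O_{\tOmega_2}$-mass captured by the covering balls lies at distance $>R'e^{T_2}$ from $\partial B^U_L$, i.e.\ safely inside $B^U_L$, so that the corresponding $J_{u_0^{(i)}}$ (which by item (3) and the scale estimate are themselves contained in a slightly enlarged ball, well within $B^U_L$) do not spill outside $B^U_L$; this justifies intersecting with $B^U_L$ in the statement without losing mass. Finally, since the $B_{u_0^{(i)}}$ are disjoint and each $J_{u_0^{(i)}}\subseteq$ a fixed dilate of $B_{u_0^{(i)}}$ (of comparable radius), the $J_{u_0^{(i)}}$ have uniformly bounded overlap $N_0=N_0(U,D,c,R)$; hence
\[
m_U\Bigl(\bigcup_{u_0\in O_{\tOmega_2}}J_{u_0}\cap B^U_L\Bigr)\geq \frac1{N_0}\sum_i m_U(J_{u_0^{(i)}}\cap B^U_L)\geq C_0\, m_U(O_\Omega\cap O_{\tOmega_2}),
\]
with $C_0>0$ absorbing $c$, the dimension factor, the overlap constant $N_0$, the $5^{-(d-1)}$ from the covering, the uniform ratio of $J$-mass to ball-mass, and the factor $1/2$ from \eqref{tech_ineq_x0_Chacon-Ornstein}. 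Crucially $C_0$ does not depend on $\eta_2$ (nor on $x_0$ or $L$), only on $U$, $D$, and the constants $c,R$ of \cref{lma-Phi}, as claimed.

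The main obstacle I anticipate is the bounded-overlap bookkeeping: one must be careful that $\Psi_{u_0}$ maps the \emph{small} set $D^{a_{\ln(c/2)}}$ (not all of $D$) so that the image stays inside a ball comparable to — and roughly concentric with — the covering ball $B_{u_0}$, uniformly over $u_0\in O_{\tOmega_2}$ and over the admissible $\gamma_{u_0}\in V_0$ and $t_{u_0}\in[T_1,T_2]$; this is precisely what the shrinkage by $a_{\ln(c/2)}$ in the definition of $J_{u_0}$ buys (it leaves enough room for the distortion bounded by item (3) of \cref{lma-Phi}), and verifying it reduces to the uniform estimates (3)--(6) there together with compactness of $\overline{V_0}$ and of $[T_1,T_2]$.
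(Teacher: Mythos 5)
There is a genuine gap, and it sits at the exact point the paper's proof is designed to handle. First, your geometric picture is inverted: since $t_{u_0}\in[T_1,T_2]$ with $T_1$ large and $U$ is the \emph{expanding} horospherical group, conjugation by $a_{t_{u_0}}$ scales $U$ by $e^{+t_{u_0}}$, so $\Psi_{u_0}(D)$ and $J_{u_0}$ have diameter $\sim \rho e^{t_{u_0}}$ and are displaced from $u_0$ by as much as $Re^{t_{u_0}}$; they are not contained in balls of radius $\sim\rho e^{-t_{u_0}}$ about $u_0$. More importantly, even after correcting the sign, your bounded-overlap argument rests on the radii being ``uniformly comparable'', but the only a priori comparability is $e^{t_i}/e^{t_j}\le e^{T_2-T_1}$, and $T_2$ (unlike $c,\rho,R$) depends on $\eta_2$ and on the measure $\mu$ through the requirement that $\tOmega_2$ capture most of $\tOmega$. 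An overlap constant built from that ratio makes $C_0$ depend on $\eta_2$, which destroys the contradiction in the proof of \cref{main_theorem} (one can no longer take $\eta_2$ small at the end). The paper's essential move, absent from your proposal, is dynamical rather than metric: it invokes \cref{lma_case1} ($\Psi_{u_i}(D)\cap O_{\tOmega_2}=\emptyset$) to show that if $J_{u_i}\cap J_{u_j}\neq\emptyset$ then necessarily $\beta\le e^{t_j-t_i}\le\beta^{-1}$ with $\beta=\nicefrac{c\rho}{2R'}$ and $d_U(u_i,u_j)\lesssim\beta^{-1}R'e^{t_i}$, i.e.\ overlapping $J$-sets automatically come from comparable scales with a constant depending only on $c,\rho,R$. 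Only then does a Besicovitch count give multiplicity bounded by a constant of the allowed form.

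There is a second, related bookkeeping gap. Your per-piece lower bound is (correctly, in spirit) $m_U(J_{u_i})\gtrsim c\,m_U(B_i\cap O_\Omega)$, i.e.\ in terms of the $O_\Omega$-mass inside the covering ball, not in terms of $m_U(B_i)$ itself; your claim that this is ``comparable to a fixed fraction of $m_U(B_{u_0^{(i)}})$'' is an unjustified density assumption on $O_\Omega$. With a Vitali/$5r$ disjoint subfamily you cannot sum the quantities $m_U(B_i\cap O_\Omega)$ to recover $m_U(O_\Omega\cap O_{\tOmega_2})$, because that mass may live in the enlarged balls $5B_i$ but not in the selected $B_i$. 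The paper avoids any density input by applying the Besicovitch covering theorem to balls centered at points of $O_\Omega\cap O_{\tOmega_2}$ (restricted to $B^U_{L-R'e^{T_2}}$, which is where \eqref{tech_ineq_x0_Chacon-Ornstein} enters, and you do use that correctly), so that the chosen balls genuinely cover the set and $\sum_i m_U(B_i\cap O_\Omega)\ge m_U\bigl((O_\Omega\cap O_{\tOmega_2})\cap B^U_{L-R'e^{T_2}}\bigr)$. To repair your argument you would need both changes: Besicovitch in place of Vitali, and the \cref{lma_case1}-based scale-comparison for intersecting $J$-sets in place of the global ``comparable radii'' claim.
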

\noindent
Note that the set on the left hand side is open in $ U $ and hence measurable. Recall the constant $ L $ was chosen to satisfy \eqref{tech_ineq_x0_Chacon-Ornstein}.
\begin{proof}
	The set $ D $ constructed in \cref{lma-Phi} is a ball in $ U $ around the identity. For convenience we will use the metric notation $ D=B_\rho^U $ throughout the proof. Recall 
	\[ \left(B^U_r\right)^{a_s} = a_s B^U_r a_{-s} = B^U_{r e^s} \]
	for any $ r > 0 $ and $ s \in \R $. 
	
	Set $ R'=R+\rho $. Consider the following open cover of $ (O_{\Omega} \cap O_{\tOmega_2}) \cap B^U_{L-R^\prime e^{T_2}} $:
	\[ \mathcal{F} = \left\{ B^U_{\frac{1}{2}c \rho e^{t_{u_0}}} u_0 \;:\; u_0 \in (O_{\Omega} \cap O_{\tOmega_2}) \cap B^U_{L-R^\prime e^{T_2}} \right\}.  \]
	By the Besicovitch covering theorem \cite[Theorem 2.7]{mattila1999geometry} there exists a constant $ C_{\text{Bes}} \geq 1 $, independent of $ \mathcal{F} $ and of $ (O_{\Omega} \cap O_{\tOmega_2}) \cap B^U_{L-R^\prime e^{T_2}} $, and a countable sub-cover $ \mathcal{F}^\prime \subset \mathcal{F} $ of balls centered at $ F=\{u_1,u_2,...\} $ for which 
	\begin{equation}\label{tech_Besicovitch_const_P}
	\sum_{B' \in \mathcal{F}'} \chi_{B'} \leq C_{\text{Bes}}.
	\end{equation} 
	
	We will focus our attention to the associated collection of sets $ \{J_{u_i}\}_{u_i \in F} $. 
	For any $ u_i \in F $ choose $ t_i \in [T_1,T_2] $ and $ \gamma_i \in G $ satisfying
	\begin{equation*}
	\gamma_i \in (a_{-t_i} (u_0 g)\Gamma (u_0 g)^{-1}a_{t_i}) \cap V_0.
	\end{equation*}
	Item (3) in \cref{lma-Phi} gives that
	\begin{equation}\label{tech_item3_lma_Phi}
	B^U_{cr}\cdot \Phi_U(\gamma_i^k,e) \subseteq \Phi_U(\gamma_i^k,B^U_{r}) \subseteq B^U_{r} \cdot \Phi_U(\gamma_i^k,e)
	\end{equation}
	for all $ 0 < r < \rho $.
	
	Recall the explicit definitions of $ \Psi_{u_i} $ and $ J_{u_i} $:
	\[ \Psi_{u_i}(\cdot)= \left(\Phi_U\left(\gamma_i^k,\cdot \right) \right)^{a_{t_i}}u_i \]
	\[ J_{u_i}= \left(\Phi_U\left(\gamma_i^k, B^U_{\frac{1}{2}c\rho} \cap (O_\Omega u_i^{-1})^{a_{-t_i}} \right) \right)^{a_{t_i}}u_i. \]
	If we denote
	\[ w_i=\Psi_{u_i}(e)=(\Phi_U({\gamma_{i}^k},e))^{a_{t_i}}u_i \]
	then the bounds given in (\ref{tech_item3_lma_Phi}) imply
	\begin{equation}\label{eq_Jprimes}
	J_{u_i} \subseteq B^U_{\frac{1}{2}c\rho e^{t_i}} w_i
	\end{equation}
	and
	\[ B^U_{c\rho e^{t_i}} w_i \subseteq \Psi_{u_i}(D) \subseteq B^U_{\rho e^{t_i}}w_i. \]
	Item (5) of \cref{lma-Phi} states, in particular, that $ \Phi_U(\gamma_{u_0}^k,e) \in B_R^U $ implying $ w_i \in B^U_{Re^{t_i}} u_i $. Therefore
	\begin{equation}\label{eq_Js}
	B^U_{c\rho e^{t_i}} w_i \subseteq \Psi_{u_i}(D) \subseteq B^U_{R^\prime e^{t_i}}u_i
	\end{equation}
	where $ R^\prime=R+\rho $ as defined earlier. 
	
	\begin{figure}[b]
		\begin{tikzpicture}[scale=0.455]
		\draw[fill=black, fill opacity=0.25,draw opacity=0.2]  plot[smooth cycle, tension=.8] coordinates {(-0.1,1.3) (-0.05,1) (0,0.5) (-0.5,0.6) (-1,0.35) (-0.5,1)};
		\draw[fill=black, fill opacity=0.25,draw opacity=0.2]  plot[smooth cycle, tension=.7] coordinates {(0.7,0.6) (1.15,0.7) (1.2,-0.1) (0.8,-0.1) (0.6,0.1)};
		\draw[fill=black, fill opacity=0.25,draw opacity=0.2]  plot[smooth cycle, tension=.7] coordinates {(1,-0.9) (0.5,-1.2) (0,-1) (1,-0.5)};
		
		\draw  (0,0) ellipse (1.5 and 1.5);
		
		\draw  (0,0) ellipse (3 and 3);
		
		\draw[rotate around={30:(0,0)},fill=black, fill opacity=0.05, draw opacity=0.2]  (0.4,0) ellipse (5 and 3.5);
		
		\draw[thick,rotate around={5:(-9,-6)}] (9,-6) node (v3) {} arc (0:60:18);
		\draw[thick,dashed,rotate around={-5:(-9,-6)}] (9,-6) node (v3) {} arc (0:10:18);
		\draw[thick,dashed,rotate around={65:(-9,-6)}] (9,-6) node (v3) {} arc (0:20:18);
		\draw[dashed,opacity=0.4,rotate around={5:(-9,-6)}]  (-9,-6) -- (9,-6);
		\draw[dashed,opacity=0.4,rotate around={65:(-9,-6)}]  (-9,-6) -- (9,-6);
		
		\draw  (-9,-6) ellipse (1.5 and 1.5);
		
		\node (v2) at (-9,-6) {\textbullet};
		\node at (-9.5,-6.2) {$u_i$};
		\node at (0,0) {\textbullet};
		\node at (-0.55,-0.3) {$w_i$};
		\node at (0.96,0.28) {\footnotesize $1$};
		\node at (0.5,1) {\footnotesize $2$};
		\node at (1.5,1.55) {\footnotesize $3$};
		\node at (3,2.55) {\footnotesize $4$};
		\node at (5,4) {\footnotesize $5$};
		\node at (-8.2,-5.45) {\footnotesize $6$};
		\node[draw,draw opacity=0.2,align=left] at (-9,6) {
			\textbf{\ \; Legend}\\\vspace{1pt} 
			$1$ --- $J_{u_i}$ \\\vspace{1pt} 
			$2$ --- $B^U_{\frac{1}{2}c\rho e^{t_i}} w_i$\\\vspace{1pt} 
			$3$ --- $B^U_{c\rho e^{t_i}} w_i$\\\vspace{1pt} 
			$4$ --- $\Psi_{u_i}(D)$\\\vspace{1pt} 
			$5$ --- $B^U_{R^\prime e^{t_i}}u_i$\\\vspace{1pt} 
			$6$ --- $B_i$};
		\end{tikzpicture}
		\caption{The different sets in \cref{lma_cover}} associated to a point $ u_i \in F $.
	\end{figure}
	
	We would like to bound $ \sum_{u_i \in F} \chi_{J_{u_i}} $. Set 
	\[ \beta=\frac{c\rho}{2R^\prime} < 1. \] 
	We claim that given any $ u_i,u_j \in F $ if $ J_{u_i} \cap J_{u_j} \neq \emptyset $ then
	\[ \beta \leq e^{t_j-t_i} \leq \beta^{-1} \]
	and
	\[ d_U(u_i,u_j) < 2\beta^{-1}R' e^{t_i} \]
	Indeed, if $ \beta \leq e^{t_j-t_i} \leq \beta^{-1} $ and $ \|u_i-u_j\|_U \geq 2\beta^{-1}R' e^{t_i} $ then
	\[ d_U(u_i,u_j) \geq \max\{2R' e^{t_i}, 2R' e^{t_j}\} \]
	and by (\ref{eq_Js}) we are ensured that $ \Psi_{u_i}(D) \cap \Psi_{u_j}(D) = \emptyset $ and consequently also $ J_{u_i} \cap J_{u_j} = \emptyset $. On the other hand, if $ e^{t_j-t_i} < \beta $ and $ v \in J_{u_i} \cap J_{u_j} $ then by (\ref{eq_Jprimes}) and (\ref{eq_Js}) we have
	\begin{align*}
	d_U(u_j,w_i) &\leq d_U(u_j,v)+d_U(v,w_i) < R' e^{t_j}+\frac{1}{2}c\rho e^{t_i} \\
	&< \beta R' e^{t_i}+\frac{1}{2}c\rho e^{t_i} = c\rho e^{t_i}
	\end{align*} 
	implying  $ u_j \in \Psi_{u_i}(D) $ and hence $ O_{\tOmega_2}\cap \Psi_{u_i}(D) \neq \emptyset $, in contradiction to \cref{lma_case1}, establishing the claim.
	
	\medskip
	To any $ u_j \in F $ let 
	\[ B_j = B^U_{\frac{1}{2}c \rho e^{t_j}} u_j \]
	be the corresponding ball in $ \mathcal{F}' $. We have thus shown that for any $ u_i $, if $ J_{u_j} $ intersects $ J_{u_i} $ then
	\[ \frac{1}{2}c \rho \beta e^{t_i} \leq \text{radius}(B_j) \leq \frac{1}{2}c \rho \beta^{-1} e^{t_i} \]
	and
	\[ B_j \cap B^U_{2\beta^{-1} R'e^{t_i}}u_i \neq \emptyset.\]
	Furthermore, Since $ c \rho < R' $, we have
	\[  B_j \subset B^U_{3\beta^{-1} R'e^{t_i}}u_i \]
	
	The number of such $ B_j $-s contained in $ B^U_{3\beta^{-1} R'e^{t_i}}u_i $ is bounded above by
	\begin{equation}\label{tech_coverconstC}
	C_{\text{Bes}} \frac{m_U(B^U_{3\beta^{-1} R'e^{t_i}})}{m_U(B^U_{\frac{1}{2}c \rho \beta e^{t_i}})} = C_{\text{Bes}} \left(\frac{6R'}{c \rho \beta^2}\right)^{d-1} =3^{d-1} C_{\text{Bes}} \left(\frac{2(R+\rho)}{c \rho}\right)^{3(d-1)}
	\end{equation}
	where $ d-1 = \dim U $ and $ C_{\text{Bes}} $ is the Besicovitch covering constant from (\ref{tech_Besicovitch_const_P}). Consequently if we denote the above constant in \eqref{tech_coverconstC} by $ C $ then
	\[ \sum_{u_i \in F} \chi_{J_{u_i}} \leq C. \]
	
	Note that since $ F \subseteq B^U_{L-R' e^{T_2}} $, by (\ref{eq_Js}) we know that all $ J_{u_i} $ are contained in $ B^U_L $.
	We may therefore conclude
	\[ m_U\left( \bigcup_{u \in O_{\tOmega_2}} J_u \cap B^U_L \right)  \geq m_U\left( \bigcup_{u_i \in F} J_{u_i} \right)
	\geq C^{-1}\sum_{u_i \in F} m_U(J_{u_i}). \]
	
	We shall now estimate the $ m_U $-measure of each $ J_{u_i} $. Let $ \varphi_i : U \to U $ be defined by 
	\[ \varphi_i(u)=u^{a_{t_i}}u_i  \]
	for all $ u \in U $. We may rewrite the definition of $ J_{u_i} $ as
	\begin{align*}
	J_{u_i} &= (\varphi_i \circ \Phi_U(\gamma_i^k,\cdot))\left(\left(B^U_{\frac{1}{2}c\rho e^{t_i}}u_i \cap O_\Omega\right)^{a_{-t_i}}u_i^{-1}\right) = \\
	& = (\varphi_i \circ \Phi_U(\gamma_i^k,\cdot) \circ \varphi_i^{-1})(B_i \cap O_\Omega)
	\end{align*}
	
	Note that the Jacobian $ \textbf{Jac}_u(\varphi_i) $ is constant $ \equiv e^{(d-1)t_i} $ over all of $ U $. By the uniform bound on the Jacobian of $ \Phi_U(\gamma_{u_0}^k,\cdot) $ on $ D $ given in item (4) of \cref{lma-Phi} we have
	\[ |\textbf{Jac}_{v}(\varphi_i \circ \Phi_U(\gamma_i^k,\cdot) \circ \varphi_i^{-1})| > c \]
	for all $ v \in D $, and consequently
	\[ m_U(J_{u_i}) > c \cdot m_U \left(B_i \cap O_\Omega\right). \]
	Hence
	\begin{align*}
	m_U\left( \bigcup_{u \in O_{\tOmega_2}} J_u \cap B^U_L \right) 
	& \geq C^{-1}\sum_{u_i \in F} m_U(J_{u_i}) \geq \\ 
	& \geq \frac{c}{C} \sum_{u_i \in F} m_U \left(B_i \cap O_\Omega\right) \geq \\
	& \geq \frac{c}{C} m_U\left((O_{\Omega} \cap O_{\tOmega_2}) \cap B^U_{L-Re^{T_2}}\right)
	\end{align*} 
	where the last inequality follows from the fact that $ \mathcal{F}' $ is a cover of the set $ (O_{\Omega} \cap O_{\tOmega_2}) \cap B^U_{L-Re^{T_2}} $.
	Making use of assumption (\ref{tech_ineq_x0_Chacon-Ornstein}) which states
	\[ m_U\left((O_{\Omega} \cap O_{\tOmega_2}) \cap B^U_{L-Re^{T_2}}\right) > \frac{1}{2} m_U(O_{\Omega} \cap O_{\tOmega_2}) \]
	we conclude
	\[ m_U\left( \bigcup_{u \in O_{\tOmega_2}} J_u \cap B^U_L \right) > \frac{c}{2C} m_U(O_{\Omega} \cap O_{\tOmega_2}), \]
	proving the claim with $ C_0=\frac{c}{2C} $. 
\end{proof}

\vspace{0.3cm}
Joining the results of lemmata \ref{lma_case1}--\ref{lma_cover} we deduce
\begin{eqnarray*}
	m_U (O_{\tOmega} \smallsetminus O_{\tOmega_2}) \geq m_U\left( \bigcup_{u \in O_{\tOmega_2}} J_u \cap B_L^U \right)
	\geq C_0 \cdot m_U(O_{\Omega} \cap O_{\tOmega_2})
\end{eqnarray*}
thus concluding the proof of \cref{main_theorem}.

\section{Consequences of the Main Theorem}\label{Section_corollaries}

\subsection{MA-quasi-invariance} We shall now prove our first corollary of the main theorem, giving a sufficient condition for $ MA $-quasi-invariance in the context of a general discrete group $ \Gamma $:

\vspace{2mm}
\begin{cornm}[\ref{cor_MA-qi}]
	Let $ \Gamma $ be any discrete subgroup of $ G $ and let $ \mu $ be any $ U $-e.i.r.m.~on $ \GmodGamma $. If $ \asymneighborhood $ contains a Zariski dense subgroup of $ G $ then $ \mu $ is $ MA $-quasi-invariant.	
\end{cornm}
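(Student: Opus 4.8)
The plan is to feed the hypothesis into \cref{main_theorem} and then reduce the conclusion to a density statement about Zariski dense subgroups. By \cref{main_theorem}, $\alpha(\asymneighborhood)\subseteq H_\mu$, and $H_\mu$ is a closed subgroup of $MA$. Since $\asymneighborhood$ contains, by hypothesis, a Zariski dense subgroup $\Lambda<G$, we have $\alpha(\Lambda)\subseteq H_\mu$; thus it suffices to show that the smallest closed subgroup of $MA$ containing $\alpha(\Lambda)$ is all of $MA$, as this is precisely the assertion that $\mu$ is $MA$-quasi-invariant.

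Next I would observe that $\Lambda$ contains an abundance of loxodromic (hyperbolic) elements: the set of loxodromic elements of $G$ is non-empty and Zariski open, so its intersection $\Lambda_{\mathrm{lox}}$ with the Zariski dense set $\Lambda$ is again Zariski dense in $G$ (the complement is contained in a proper closed subvariety and $G$ is irreducible). For $\gamma\in\Lambda_{\mathrm{lox}}$, the discussion following \eqref{tech_Phi2alpha} shows that at least one of $\alpha(\gamma),\alpha(\gamma^{-1})$ is a non-trivial element of $MA$; as $\Lambda_{\mathrm{lox}}$ is symmetric, $H_\mu$ contains a non-trivial element $\alpha(\gamma)=m_\gamma a_{\ell(\gamma)}$ for a Zariski dense set of $\gamma$, where (see \S\ref{Section_reparam_map}) $a_{\ell(\gamma)}$ with $\ell(\gamma)>0$ records the translation length of $\gamma$ along its axis and $m_\gamma\in M$ the rotational holonomy, and moreover $\alpha(\gamma)$ is $G$-conjugate to $\gamma$ via \eqref{tech_alpha_conjugacyrelation}. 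So the proof is reduced to the following purely algebraic assertion: if $\Lambda<G$ is Zariski dense, the closed subgroup of $MA$ generated by $\{\alpha(\gamma):\gamma\in\Lambda_{\mathrm{lox}}\}$ equals $MA$.

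I expect this density statement to be the heart of the argument, and I would establish it in two steps. For the projection onto $A\cong\R$ one needs the length spectrum $\{\ell(\gamma):\gamma\in\Lambda_{\mathrm{lox}}\}$ to generate a dense (equivalently, non-discrete) subgroup of $\R$: this is the non-arithmeticity of the length spectrum of a Zariski dense group, which one can obtain from two loxodromic elements with disjoint axes by combining the asymptotics $\ell(\gamma_1^n\gamma_2^m)=n\ell(\gamma_1)+m\ell(\gamma_2)+c+o(1)$ with the fact that the entries of a finitely generated Zariski dense group cannot all be constrained to lie in a fixed multiplicative lattice $e^{\lambda\Z}$. The genuinely delicate point is ruling out "screw‑motion" subgroups: a proper closed subgroup $L\le MA$ that surjects onto $A$ is of the form $\{(\psi(t),t):t\in\R\}\cdot K_0$ for a one‑parameter subgroup $\psi$ of $M$ normalizing a proper closed subgroup $K_0\le M$, which would force $m_\gamma$ to track $\ell(\gamma)$ rigidly. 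To break this coupling I would fix $\gamma_0\in\Lambda_{\mathrm{lox}}$ with non-trivial holonomy — its existence already follows from Zariski density, since otherwise every loxodromic element of $\Lambda$ would be a pure translation and $\Lambda$ would preserve a proper totally geodesic subspace — and conjugate it within $\Lambda$: the elements $\delta\gamma_0\delta^{-1}$, $\delta\in\Lambda$, all have translation length $\ell(\gamma_0)$, while their holonomies $m_{\delta\gamma_0\delta^{-1}}$, being governed by the $M$-components of $\delta$ relative to the axis of $\gamma_0$, range over a Zariski dense (hence dense) subset of the $M$-conjugacy class of $m_{\gamma_0}$, which is incompatible with the rigid length–holonomy coupling imposed by $L$. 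Alternatively, one may simply invoke the known density of Jordan projections in $MA$ for Zariski dense subgroups (the same density that underlies mixing of the frame flow on Zariski dense quotients; cf.~\cite{Winter2015}). Either way, $H_\mu$ is a closed subgroup of $MA$ containing such a dense set, hence $H_\mu=MA$, completing the proof.
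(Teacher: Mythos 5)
Your opening reduction is exactly the paper's: apply \cref{main_theorem}, note that $H_\mu$ is closed in $MA$, and reduce to showing that the closed subgroup $H$ generated by $\alpha(\Lambda)\subseteq\alpha(\asymneighborhood)$ is all of $MA$. The gap is in the density step, which is the actual content of the corollary. First, a technical point: the set of loxodromic elements is \emph{not} Zariski open in $G$ (already in $\SL_2(\R)$ the elliptic elements $\{|\trace|<2\}$ have nonempty interior, so the loxodromic locus contains no nonempty Zariski open set); that a Zariski dense subgroup contains a Zariski dense set of loxodromic elements is true but is a theorem, not a formal consequence of openness, so your justification has to be replaced. More seriously, the ``delicate point'' you identify is dispatched by assertion: the claim that the holonomies $m_{\delta\gamma_0\delta^{-1}}$, $\delta\in\Lambda$, fill out a dense subset of the $M$-conjugacy class of $m_{\gamma_0}$ is exactly the kind of statement that needs proof here --- $\alpha$ is not an algebraic map, the holonomy of $\delta\gamma_0\delta^{-1}$ is not simply ``the $M$-component of $\delta$'', and Zariski density of $\Lambda$ does not transfer to this set by any formal argument; this is precisely the content that the external theorem of Guivarc'h--Raugi \cite{Guivarc'h-Raugi} (which the paper invokes) supplies. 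Moreover, even granting that density, your contradiction with the screw-motion group $L=\{(\psi(t),t)\}K_0$ requires the conjugacy class of $m_{\gamma_0}$ not to lie in the single coset $\psi(\ell(\gamma_0))K_0$; if $m_{\gamma_0}$ is central in $M$ the class is a point and no contradiction arises, and your claim that triviality of all holonomies would force an invariant totally geodesic subspace is also unproved and does not address central-but-nontrivial holonomy. So one must produce $\gamma_0$ with a ``large'' holonomy class relative to an unknown $K_0$, and making that precise is where the real work lies.

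Your fallback --- ``simply invoke the known density of Jordan projections in $MA$'' --- essentially is the paper's route, but stated too optimistically: the citable result \cite{Guivarc'h-Raugi} yields only that $[M,M]H$ has finite index in $MA$ (hence equals $MA$ by connectedness). That finishes $d=2,3$, but for $d\ge 4$ the paper still has to prove $M\subseteq H$, which it does by showing $\alpha(\asymneighborhood)$ is Zariski dense in $MA$ (via the dominant morphism $(n,a',u)\mapsto (nu)^{-1}a'(nu)$ and the conjugacy relation \eqref{tech_alpha_conjugacyrelation}), whence $[H,H]$ is Zariski dense in $[M,M]=M$ and therefore equals $M$. Your non-arithmeticity argument for the $A$-projection is fine in spirit (it is Dal'bo/Kim \cite{Dalbo1999,Kim2006}, which the paper uses for the weaker, non-elementary corollary), but it only controls $M\backslash MH$. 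In short: the skeleton matches the paper, but the $M$-direction --- the genuinely new part --- is not established by your proposal, either by your conjugation argument (key density claim unproved, central-holonomy case unhandled) or by the citation (which gives $[M,M]H=MA$, not $H=MA$).
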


\medskip

The corollary above for $ d=2 $ includes the case of weakly-tame surfaces introduced by Sarig in \cite{Sarig2010}. Sarig defined weakly-tame surfaces as hyperbolic surfaces satisfying the condition that all geodesic rays $ \{a_{-t}g\Gamma \}_{t \geq 0} $ intersect  an infinite number of times pairs of pants with bounded ``norm'' (sum of the lengths of closed geodesics bounding the pair of pants). Sarig's condition implies the existence of a sequence $ t_n \to \infty $ for which $ a_{-t_n}g\Gamma g^{-1}a_{t_n} $ contains a free group generated by two hyperbolic elements $ \gamma_1 $ and $ \gamma_2 $, with uniformly separated fixed points on $ \partial \mathbb{H}^2 $. This in turn implies that $ \mathcal{S}_{g\Gamma} $ contains a non-elementary and Zariski dense subgroup.

\begin{proof}[Proof of \cref{cor_MA-qi}]
	Let $ H=\overline{\langle \alpha(\asymneighborhood) \rangle} $ be the metric closure of the group generated by $ \alpha(\asymneighborhood) $ in $ MA $. By \cref{main_theorem}, $ H \subseteq H_\mu $. We want to show that $ MA \subseteq H $.
	
	Guivarc'h and Raugi \cite[Theorem 1]{Guivarc'h-Raugi} proved a related (more general) result, which in our setting translates to: 
	\[ \text{\emph{The group }} [M,M]H \text{\emph{ is a finite index subgroup of }} MA, \] 
	where $ [M,M] $ is the commutator subgroup of $ M $. Connectedness of $ MA $ further implies
	\begin{equation}\label{eq_GuivarchRaugi}
	[M,M]H = MA.
	\end{equation}
	In the case of $ d = 2 $ and $ 3 $ this completes the proof, as $ [M,M] = \{e\} $. 
	\medskip
	
	Assume $ d \geq 4 $. Equation \eqref{eq_GuivarchRaugi} implies in particular that $ MH = MA $, hence it would suffice to show that $ M \subseteq H $. To this end we show that the commutator subgroup of $ H $ admits $ [H,H] = M $.
	
	Recall equation (\ref{tech_alpha_conjugacyrelation}) in \S\ref{Section_reparam_map} stating that for any hyperbolic $ \gamma $ with $ \alpha(\gamma) \neq e $ there exist $ n \in N $ and $ u \in U $ satisfying
	\[ \gamma = (nu)^{-1}\alpha(\gamma)(nu). \] 
	Suppose $ \alpha(\asymneighborhood) $ is not Zariski dense in the group $ MA $ (considered as a real algebraic group). Then there is a subvariety $V \subset MA$ of strictly lower dimension containing $\alpha(\asymneighborhood)$.
	
	The map $ \Psi : N\times MA \times U \to G $ defined by $ (n,a^\prime,u) \mapsto (nu)^{-1} a^\prime (nu) $ is algebraic, and is a dominant morphism (the image contains a Zariski open dense set).
	Under the above non-density assumption $ \asymneighborhood \cap \mathrm{Image}(\Psi)$ is contained in $\Psi (N \times V \times U)$. Since  $\Psi (N \times V \times U)$ is a constructible subset of $G$ of lower dimension, it follows that $\asymneighborhood$ is contained in a proper subvariety of $G$, a contradiction.
	
	Therefore $ \alpha(\asymneighborhood) $ is Zariski dense in the group $ MA $.
	Consequently, $ [H,H] $ is Zariski dense in $ [MA,MA] = [M,M]$ (considered as an algebraic group). The group $ M $ is isomorphic as a Lie group to the simple Lie group $\SO(d-1)$, hence $[M,M]=M$ (also as Lie groups) and moreover any closed proper subgroup of $M$ is contained in a proper algebraic subgroup of $M$. It follows that $ [H,H]=[M,M] =M$, concluding the proof.
\end{proof}

One may deduce strong quasi-invariance implications also under weaker conditions on $ \asymneighborhood $, namely whenever $ \asymneighborhood $ contains a non-elementary subgroup (one which is not necessarily Zariski dense when $ d \geq 3 $):
\begin{cor}
	Let $ \Gamma $ be any discrete subgroup of $ G $ and let $ \mu $ be any $ U $-e.i.r.m.~on $ \GmodGamma $. If $ \asymneighborhood $ contains a non-elementary subgroup of $ G $, then the projected measure $ (\pi_M)_* \mu $ on $ M\backslash \GmodGamma \cong T^1(\Hd / \Gamma) $ is $ A $-quasi-invariant. 
\end{cor}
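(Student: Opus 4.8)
The plan is to derive the corollary from \cref{main_theorem} together with the classical non-arithmeticity of the length spectrum of a non-elementary discrete subgroup of $G=\SO^+(d,1)$. Write $p\colon MA\to MA/M\cong A$ for the quotient homomorphism. It suffices to prove $p(H_\mu)=A$. Indeed, $A$ centralizes $M$ and $M$ acts trivially on $M\backslash\GmodGamma$, so the $A$-action descends to $M\backslash\GmodGamma\cong T^1(\Hd/\Gamma)$; if $p(H_\mu)=A$ then for every $a\in A$ there is $m\in M$ with $ma\in H_\mu$, so $(ma).\mu\sim\mu$, and applying $(\pi_M)_*$ and using $\pi_M\circ(ma)=\pi_M\circ(am)=\bar a\circ\pi_M$ gives $\bar a_*(\pi_M)_*\mu\sim(\pi_M)_*\mu$, i.e.\ $A$-quasi-invariance. (The measure $(\pi_M)_*\mu$ is Radon because $\pi_M$ is proper, $M$ being compact.)

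To prove $p(H_\mu)=A$: by \cref{main_theorem}, $\alpha(\asymneighborhood)\subseteq H_\mu$, and $H_\mu$ is a closed subgroup of $MA$. By the conjugacy relation \eqref{tech_alpha_conjugacyrelation}, $\alpha(\gamma)$ is conjugate in $G$ to $\gamma$ for every hyperbolic $\gamma$; as conjugate loxodromic elements of $G$ have equal translation length, the $A$-component $p(\alpha(\gamma))$ equals $a_{\ell(\gamma)}$, where $\ell(\gamma)>0$ is the translation length of $\gamma$ along its axis --- and for every loxodromic $\gamma$ at least one of $\gamma,\gamma^{-1}$ has $\alpha(\cdot)\neq e$, so the formula applies to it. Thus $p(H_\mu)$, a subgroup of $A\cong\R$, contains the subgroup $\Lambda$ generated by $\{\ell(\gamma):\gamma\in\asymneighborhood\text{ loxodromic}\}$. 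Since $\ker p=M$ is compact, $p(H_\mu)$ is also a closed subgroup of $A\cong\R$, hence trivial, infinite cyclic, or all of $A$; so it is enough to show that $\Lambda$ is dense.

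Here is where the hypothesis enters: $\asymneighborhood$ contains a non-elementary subgroup $\Delta<G$. Such a $\Delta$ contains two loxodromic elements whose fixed-point sets in $\partial\Hd$ are disjoint, and after replacing them by sufficiently high powers we obtain a rank-one Schottky subgroup $\Sigma\le\Delta\subseteq\asymneighborhood$, which is discrete and non-elementary. By the non-arithmeticity of the length spectrum of a non-elementary discrete subgroup of $G$ (Dal'bo; equivalently, the topological mixing of the geodesic flow on the non-wandering set of $T^1(\Hd/\Sigma)$), the additive subgroup of $\R$ generated by $\{\ell(\gamma):\gamma\in\Sigma\text{ loxodromic}\}$ is dense. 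As this subgroup is contained in $\Lambda$, we get $\overline{\Lambda}=\R$, which finishes the proof.

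The one genuinely non-trivial external input is this non-arithmeticity statement; everything else (descent of the $A$-action, closedness of $p(H_\mu)$ via compactness of $M$, the conjugacy relation $\alpha(\gamma)\sim\gamma$) is bookkeeping already available from the earlier sections. If a self-contained argument is preferred, one can work directly in $\Sigma=\langle\gamma_1,\gamma_2\rangle$: one may assume $\ell(\gamma_1)/\ell(\gamma_2)\in\Q$ (otherwise $\langle\ell(\gamma_1),\ell(\gamma_2)\rangle$ is already dense) and, after passing to powers, that $\ell(\gamma_1)=\ell(\gamma_2)$; then $\ell(\gamma_1^n\gamma_2^m)=(n+m)\ell(\gamma_1)+c_m+o(1)$ as $n\to\infty$, with constants $c_m$ coming from the relevant matrix coefficients of $\gamma_2^m$ that are nonzero precisely because the axes are disjoint, and one checks these $c_m$ cannot all be integer multiples of $\ell(\gamma_1)$. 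I would, however, simply cite Dal'bo.
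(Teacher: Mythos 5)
Your proposal is correct and follows essentially the same route as the paper: both use \cref{main_theorem} to place $\alpha$ of the loxodromic elements of $\asymneighborhood$ inside $H_\mu$, transfer translation lengths through the conjugacy \eqref{tech_alpha_conjugacyrelation}, invoke non-arithmeticity of the length spectrum to obtain a dense (hence, by closedness, full) image in $M\backslash MA\cong A$, and then push the quasi-invariance down to $M\backslash\GmodGamma$. The only difference is cosmetic: you first reduce to a discrete Schottky subgroup before citing Dal'bo, whereas the paper directly cites Kim's theorem, which asserts non-arithmeticity of the length spectrum for arbitrary (not necessarily discrete) non-elementary subgroups of $G$.
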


\begin{proof}
	As before, set $ H=\overline{\langle \alpha(\asymneighborhood) \rangle} $. For any hyperbolic element $ \gamma \in G $ let 
	\[ \ell(\gamma) = \inf_{z \in \Hd} d_{\Hd}(z,\gamma.z)  \]
	denote the translation length of $ \gamma $. Recall that $ \ell(m a_t)=|t| $ for any $ ma_t \in MA $.
	
	Kim \cite{Kim2006}, generalizing a theorem by Dal'bo \cite{Dalbo1999} (see also \cite[Ch.III Thm.3.6]{Dalbo_Book}), proved that the length spectrum of any non-elementary subgroup of $ G $ is non-arithmetic. Therefore there exist $ \gamma_1 , \gamma_2 \in \asymneighborhood $ for which $ \langle \ell(\gamma_1) , \ell(\gamma_2) \rangle $ is a dense subgroup of $ \R $. We show in \S\ref{Section_reparam_map} that given any hyperbolic element $ \gamma $, amongst $ \alpha(\gamma),\alpha(\gamma^{-1}) $ at least one is not equal to $ e $. Hence without loss of generality we may assume $ \alpha(\gamma_i) \neq e $ for both $ i=1,2 $. Since $ \alpha(\gamma_i) $ and $ \gamma_i $ are conjugate, they have equal translation lengths and therefore
	\[ \langle \ell\left(\alpha(\gamma_1)\right),\ell\left(\alpha(\gamma_2)\right) \rangle \]
	is dense in $ \R $.  
	Therefore $ M \backslash M{\alpha(\asymneighborhood)} $ generates a dense subgroup in $ M \backslash MA $, implying $ M \backslash MH = M \backslash MA $. \cref{main_theorem} now concludes the claim.
\end{proof}

\subsection{Measure Decomposition}\label{Subsection_measuredecomp}

Let $ Q = UMA $ be a minimal parabolic. Denote by $ \pi_-$ the quotient map $ g \mapsto Qg $ onto $ Q \backslash G \cong S^{d-1} $. The group $ G $ is diffeomorphic to
\[ U \times M \times A \times (Q \backslash G) \cong U \times M \times A \times S^{d-1} \]
The left action of $ UMA $ in these coordinates is easily computed as
\begin{equation}\label{tech_leftactUMAScoor}
uma.(u',m',a',\xi) = (u(u')^{ma},mm',aa',\xi)
\end{equation}
where $ \xi \in Q \backslash G $ and conjugation is denoted by $ g^h = hgh^{-1} $.

Fix $ \xi \in Q \backslash G $ and two points $ g,h \in G $. Denote by $ g_\xi $ an element in $ Kg $ satisfying $ \pi_-(g_\xi)=\xi $, respectively $ h_\xi \in Kh $ with $ \pi_-(h_\xi)=\xi $. Using the $ KAU $ decomposition we write
\[ hg_\xi^{-1} = k a_{b_\xi(g,h)} u. \]
The function $ b_\xi : G\times G \to \R $ above is called the Busemann function. This function is usually defined over $ \Hd \times \Hd $ and satisfies
\[ |b_\xi(g,h)| = \lim_{t \to \infty} d_{\Hd}(Ka_{-t}g_\xi, Ka_{-t}h_\xi). \]

Given the point $ p_\xi=(e,e,e,\xi) $ one can verify that the right action of an element $ \gamma \in \Gamma $ is given by
\[ (e,e,e,\xi).\gamma = (u({p_\xi \gamma}),m({p_\xi \gamma}),a_{b_\xi(\gamma^{-1},e)},\xi \gamma^{-1}) \]
for some $ u({p_\xi \gamma}) \in U $ and $ m({p_\xi \gamma}) \in M $ depending on $ \xi $ and $ \gamma $. Therefore
\[ (u,m,a_t,\xi).\gamma = \left(u (u({p_\xi \gamma}))^{ma_t},mm({p_\xi \gamma}),a_{t+b_\xi(\gamma^{-1},e)},\xi \gamma^{-1}\right). \]

A measure $ \nu $ on $ S^{d-1} \cong Q \backslash G $ is called $ \Gamma $-conformal with parameter (or dimension) $ \beta $ if
\[ \frac{d\gamma_* \nu}{d\nu}(\xi) = e^{-\beta b_\xi(\gamma^{-1},e)} \]
for every $ \gamma \in \Gamma $ and $ \xi \in S^{d-1} $, where $ \gamma $ acts on $ Q \backslash G $ on the right.

\begin{lemma}
	Let $ \mu $ be a $ U $-ergodic and invariant and $ MA $-quasi-invariant Radon measure on $ \GmodGamma $. Then $ \mu $ is induced by a measure $ \tmu $ on $ G $ presented in $ U \times M \times A \times S^{d-1}  $ coordinates as
	\[ d\tmu = e^{\beta t} d\nu dm dt du  \] 
	where $ \nu $ is a $ \Gamma $-conformal measure with parameter $ \beta $ on $ S^{d-1} $.
\end{lemma}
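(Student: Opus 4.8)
The plan is to lift $\mu$ to a Radon measure $\tmu$ on $G$ and then successively rigidify its form in the $U\times M\times A\times S^{d-1}$ coordinates, using $U$-ergodicity to force the relevant Radon--Nikodym cocycles to be constant. Since $\pi\colon G\to\GmodGamma$ is a covering map there is a unique right-$\Gamma$-invariant Radon measure $\tmu$ on $G$ with $\pi_*\tmu=\mu$; because left translation by $UMA$ commutes with the right $\Gamma$-action, uniqueness of $\tmu$ transfers all the invariance properties of $\mu$, so $\tmu$ is left-$U$-invariant, left-$MA$-quasi-invariant, and its left Radon--Nikodym derivatives under $MA$ descend to those of $\mu$. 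In the stated coordinates the left $U$-action is translation of the $U$-factor, so disintegrating $\tmu$ over $U\backslash G\cong M\times A\times S^{d-1}$ and using uniqueness of Haar measure on the $U$-fibres gives $\tmu = du\otimes\lambda$ for some Radon measure $\lambda$ on $M\times A\times S^{d-1}$.

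Next I claim $\mu$ is in fact left-$M$-invariant. For $m_0\in M$ the measure $m_0.\mu$ is again $U$-invariant (as $M$ normalizes $U$) and equivalent to $\mu$, so $d(m_0.\mu)/d\mu$ is a $U$-invariant function, hence $\mu$-a.e.\ equal to a constant $c(m_0)$ by $U$-ergodicity; the assignment $m_0\mapsto c(m_0)$ is a continuous homomorphism from the compact group $M$ into $\R_{>0}$, therefore trivial, and $m_0.\mu=\mu$. Consequently $\lambda = dm\otimes\sigma$ for a Radon measure $\sigma$ on $A\times S^{d-1}\cong\R\times S^{d-1}$. The same reasoning applies to $A$: for each $s$ the measure $a_s.\mu$ is $U$-invariant and equivalent to $\mu$, so $d(a_s.\mu)/d\mu$ is constant, and the cocycle relation forces this constant to depend exponentially on $s$. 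Lifting to $\tmu$ and folding in the Jacobian $e^{(d-1)s}$ of conjugation by $a_s$ on $U$, this says that translation by $s$ in the $A$-coordinate scales $\sigma$ by a single exponential $e^{\beta s}$; disintegrating $\sigma$ over $S^{d-1}$ and solving the resulting functional equation on $\R$ yields $\sigma = e^{\beta t}\,dt\otimes\nu$ for a nonzero Radon measure $\nu$ on $S^{d-1}$. Putting the pieces together, $d\tmu = e^{\beta t}\,du\,dm\,dt\,d\nu(\xi)$.

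It remains to identify $\nu$ as $\Gamma$-conformal of parameter $\beta$, and here I use right-$\Gamma$-invariance of $\tmu$. Pushing $d\tmu$ forward by the right translation $g\mapsto g\gamma$ and inserting the explicit formula for the $\Gamma$-action in these coordinates, the right-invariance of Haar measure on $U$ and on $M$ absorbs the $U$- and $M$-displacements, the shift $t\mapsto t+b_\xi(\gamma^{-1},e)$ multiplies $e^{\beta t}\,dt$ by $e^{-\beta b_\xi(\gamma^{-1},e)}$, and $\xi\mapsto\xi\gamma^{-1}$ transports $\nu$. Equating the result with $d\tmu$ shows that the pushforward of $e^{-\beta b_\xi(\gamma^{-1},e)}\,d\nu(\xi)$ under $\xi\mapsto\xi\gamma^{-1}$ equals $\nu$, which upon inverting this map and invoking the cocycle property of the Busemann function becomes precisely $\frac{d\gamma_*\nu}{d\nu}(\xi) = e^{-\beta b_\xi(\gamma^{-1},e)}$, i.e.\ $\nu$ is $\Gamma$-conformal with parameter $\beta$.

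I expect the crux to be the two cocycle-triviality steps: that $U$-ergodicity forces $d(m_0.\mu)/d\mu$ and $d(a_s.\mu)/d\mu$ to be constant, upgraded for $M$ (via compactness) to genuine invariance and for $A$ (via the cocycle identity) to a pure exponential. The remaining difficulty is bookkeeping rather than conceptual: tracking the conjugation Jacobian $e^{(d-1)s}$ so that the exponent $\beta$ in the decomposition coincides exactly with the conformal parameter, the elementary functional equation pinning down the product form of $\sigma$, and keeping the orientation conventions in the Busemann cocycle straight in the last step.
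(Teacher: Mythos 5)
Your proposal is correct and follows essentially the same route as the paper: ergodicity forces the $MA$-Radon--Nikodym cocycle to be a character, which is trivial on the compact group $M$ and exponential on $A$, yielding (after the $e^{(d-1)t}$ conjugation Jacobian, so $\beta=\alpha-(d-1)$) the product form $d\tmu=e^{\beta t}\,d\nu\,dm\,dt\,du$ for the $\Gamma$-invariant lift, and then right $\Gamma$-invariance together with the Busemann cocycle gives $\frac{d\gamma_*\nu}{d\nu}(\xi)=e^{-\beta b_\xi(\gamma^{-1},e)}$. Your version merely spells out the intermediate disintegrations over the $U$-, $M$- and $A$-factors more explicitly than the paper does.
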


For the case of $d=2$ this correspondence was observed already in Burger's paper \cite[\S2.1]{Burger1990}, and has been highlighted by Babillot in her work, e.g.~\cite{Babillot2004}; cf.~also \cite{Ledrappier-Sarig} by Ledrappier and Sarig. Note that this correspondence also works the opposite way: starting from a $\Gamma$-conformal probability measure on $S^{d-1}$ one can use the above recipe to obtain a $U$-invariant Radon measure on $G/\Gamma$.

\begin{proof}
	
	Recall that whenever an ergodic measure $ \mu $ is quasi-invariant with respect to a group $ H $ then there exists some character $ \chi : H \to \R_{>0} $ satisfying for all $ h \in H $
	\[ h.\mu = \chi(h) \mu. \]
	Since $ M $ is compact it has no non-trivial continuous characters into $ \R $ implying $ \mu $ is $ M $-invariant. On the other hand, $ A $-quasi-invariance implies there exists $ \alpha \in \R $ for which
	\[ a_t.\mu = e^{\alpha t} \mu. \]
	Let $ \tmu $ be the lift of $ \mu $ to a $ \Gamma $-invariant measure on $ G $. Therefore $ \tmu $ may be presented in $ U \times M \times A \times S^{d-1}  $ coordinates as
	\[ d\tmu = e^{\beta t}d\nu dm dt du \]
	where $ \nu $ is some finite measure on $ S^{d-1} $. Note that $ \beta = \alpha - (d-1) $ since (\ref{tech_leftactUMAScoor}) implies that applying $ a_t $ on the left induces a change of variable $ u'=u^{a_t} $ with $ \frac{du'}{du}=e^{(d-1)t} $.
	
	\medskip
	We will show that $ \Gamma $-invariance of $ \tmu $ implies that $ \nu $ is $ \Gamma $-conformal with parameter $ \beta $. Fix $ \gamma \in \Gamma $ and denote $ B_\xi = b_\xi(\gamma^{-1},e) $. Given $ f \in C_c(G) $ we compute
	\begin{align} \label{eq:measure decomposition equation}
	\int f d\tmu &= \int f d\gamma_*\tmu = \int f\left((u,m,a_t,\xi).\gamma\right) d\tmu =  \\
	&= \int f\left(u (u({p_\xi \gamma}))^{ma_t},mm({p_\xi \gamma}),a_{t+B_\xi},\xi \gamma^{-1}\right)d\tmu \nonumber
	\end{align}
	note that $ u({p_\xi \gamma}) $ is independent of $ u $ and $ m({p_\xi \gamma}) $ independent of $ m $, therefore by Fubini's theorem we get
	\begin{align*}
	\eqref{eq:measure decomposition equation}&= \int f\left(u ,m,a_{t+B_\xi},\xi \gamma^{-1}\right)d\tmu = \\
	&= \int (f\circ a_{B_\xi})\left(u^{a_{-B_\xi}} ,m,a_{t},\xi \gamma^{-1}\right) d\tmu = \\
	&= \int f\left(u^{a_{-B_\xi}} ,m,a_{t},\xi \gamma^{-1}\right) d(a_{B_\xi}.\tmu) = \\
	&= \int f\left(u^{a_{-B_\xi}} ,m,a_{t},\xi \gamma^{-1}\right) e^{\alpha B_\xi} d\tmu 
\intertext{using the change of variable $ u'=u^{a_{-B_\xi}} $ with $ du'=e^{(d-1)B_\xi}du $ gives}
	\eqref{eq:measure decomposition equation} &= \int f\left(u',m,a_{t},\xi \gamma^{-1}\right) e^{(\alpha-(d-1))B_\xi+\beta t}d\nu dm dt du' =\\
	&= \int f\left(u',m,a_{t},\xi \right) e^{\beta (B_\xi + t)}d\gamma_*\nu dm dt du'.
	\end{align*}
	Consequently
	\[ \frac{d\gamma_* \nu}{d\nu}(\xi)=e^{-\beta b_\xi(\gamma^{-1},e)} \]
	as required.
\end{proof}

\subsection{Regular covers of Geometrically Finite Manifolds}\label{sec:regular cover}

Recall that the limit set $ \Lambda_\Gamma $ associated to a discrete group $ \Gamma < G $ is defined as the set of accumulation points of $ g\Gamma $ in $ \partial \Hd $ for some (and hence any) $ g \in G $. An equivalent definition for the limit set of $ \Gamma $ is the unique minimal closed $ \Gamma $-invariant subset of $ \partial\Hd $.

A limit point $ \xi \in \Lambda_\Gamma $ is called conical if for some (hence every) $ g \in \pi^{-1}(\xi) $ the geodesic ray $ \{a_{-t} g \Gamma \}_{t \geq 0} $ in $ \GmodGamma $ returns infinitely often within a bounded distance of $ g\Gamma $. 

A limit point $ \xi $ is called parabolic if it is fixed by a parabolic element of $ \Gamma $, and is called bounded parabolic if additionally
\[ (\Lambda_\Gamma \smallsetminus \{ \xi \}) / \Gamma_\xi \]
is compact, where $ \Gamma_\xi = \mathrm{Stab}_\Gamma(\xi) $ is the stabilizer of $ \xi $ in $ \Gamma $.

\begin{dfn}\label{def_GF}
	A group $ \Gamma_0 < G $ is called geometrically finite if every point in $ \Lambda_{\Gamma_0} $ is either conical or bounded parabolic.
\end{dfn}

See \cite[Theorem 12.4.5]{Ratcliffe_Book} and \cite{Bowditch} for other equivalent definitions of geometrical finiteness.
For $ d=2,3 $ this definition is also equivalent to $ \Gamma_0 $ having a finite-sided fundamental polyhedron in $ \Hd $. Both lattices and convex cocompact subgroups are examples of geometrically finite discrete subgroups.

Note that whenever $ \{e\} \neq \Gamma \lhd \Gamma_0 $ the space $ \GmodGamma $ is a regular cover of the geometrically finite hyperbolic manifold $ \GmodGamma_0 $. We prove the following:
\medskip

\begin{cornm}[\ref{cor_GF_intro}]
	Let $ \Gamma_0  $ be a geometrically finite Zariski dense discrete subgroup of $ G $ and let $ \{e\} \neq \Gamma \lhd \Gamma_0 $. Let $ \mu $ be a $ U $-e.i.r.m.~on $ \GmodGamma $. Then one of the following holds:
	\begin{enumerate}[leftmargin=*]
		\item $ \mu $ is supported on a wandering horosphere.
		\item $ \mu $ is supported on a lift to $ \GmodGamma $ of a horosphere in $ \GmodGamma_0 $ bounding a cusp.
		\item $ \mu $ is $ MA $-quasi-invariant.
	\end{enumerate}
\end{cornm}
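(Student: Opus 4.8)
The strategy is to deduce the trichotomy from \cref{cor_MA-qi} by a geometric analysis of the rays $\{a_{-t}x\}$. Two preliminaries. Since $\Gamma\lhd\Gamma_0$ is non-trivial and $\Gamma_0$ is Zariski dense in the almost simple group $G$, the Zariski closure of $\Gamma$ is an algebraic subgroup normalized by $\overline{\Gamma_0}^{\,Z}=G$ and containing an infinite group, hence it equals $G$; thus $\Gamma$, and every conjugate $\kappa\Gamma\kappa^{-1}$, is Zariski dense in $G$. Also $\Lambda_\Gamma$ is a non-empty closed subset of $S^{d-1}$ which is $\Gamma_0$-invariant (as $\gamma_0\Gamma\gamma_0^{-1}=\Gamma$ for $\gamma_0\in\Gamma_0$), so it contains the minimal set $\Lambda_{\Gamma_0}$; hence $\Lambda_\Gamma=\Lambda_{\Gamma_0}=:\Lambda$, and by geometric finiteness $\Lambda=\Lambda_c\cupdot\Lambda_{bp}$ (conical and bounded parabolic points of $\Gamma_0$). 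Now if $\mu$ is $MA$-quasi-invariant we are in case (3); otherwise, by the contrapositive of \cref{cor_MA-qi}, $\asymneighborhood$ contains no Zariski dense subgroup of $G$, which is the case we must treat.

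The key step is to show that then, for $\mu$-a.e.\ $x=g\Gamma$, the projected ray $\{a_{-t}\bar x\}_{t\ge 0}$ in $\GmodGamma_0$ (with $\bar x=g\Gamma_0$) leaves every compact set. Note that ``$\{a_{-t}\bar x\}$ is recurrent to a compact set'' is a $U$-invariant Borel property of $x$: replacing $x$ by $ux$ changes $a_{-t}x$ only by the factor $a_{-t}ua_t\to e$, so the two rays are asymptotic. By $U$-ergodicity it suffices to exclude that $\mu$-a.e.\ $x$ has a recurrent ray; suppose it does and fix such an $x$ which moreover satisfies $\mathcal{S}_{x}=\asymneighborhood$. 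Recurrence gives $t_n\to\infty$ and a fixed compact $\tilde K\subset G$ with $a_{-t_n}g=\kappa_n\gamma_n$, $\kappa_n\in\tilde K$, $\gamma_n\in\Gamma_0$; pass to a subsequence so that $\kappa_n\to\kappa$. Crucially, because $\Gamma$ is normal in $\Gamma_0$,
\[ \mathrm{Stab}_G(a_{-t_n}x)=(a_{-t_n}g)\,\Gamma\,(a_{-t_n}g)^{-1}=\kappa_n\,(\gamma_n\Gamma\gamma_n^{-1})\,\kappa_n^{-1}=\kappa_n\,\Gamma\,\kappa_n^{-1}. \]
Hence for every $\gamma\in\Gamma$ we have $\kappa_n\gamma\kappa_n^{-1}\in\mathrm{Stab}_G(a_{-t_n}x)$ with $\kappa_n\gamma\kappa_n^{-1}\to\kappa\gamma\kappa^{-1}$; as $t_n\to\infty$ this places $\kappa\gamma\kappa^{-1}$ in $\mathcal{S}_{x}=\asymneighborhood$. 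Thus $\kappa\Gamma\kappa^{-1}\subseteq\asymneighborhood$ is a Zariski dense subgroup of $G$, a contradiction.

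Consequently $\mu$-a.e.\ ray $\{a_{-t}\bar x\}$ leaves every compact set of $\GmodGamma_0$. By geometric finiteness, for each such ray the endpoint $\xi:=\pi_-(g)\in\partial\Hd\cong Q\backslash G$ is either (i) $\notin\Lambda$, or (ii) $\in\Lambda$ but not conical, hence $\in\Lambda_{bp}$, and then the ray eventually enters and stays in a cusp of $\GmodGamma_0$. Since ``(i) versus (ii)'' is a $U$-invariant Borel dichotomy, $\mu$ is concentrated on one alternative. In case (i), $\xi\notin\Lambda=\Lambda_\Gamma$, so $\Gamma$ acts properly discontinuously near $\xi$ and $\mathrm{Stab}_\Gamma(\xi)$ is finite; there is a horoball $\mathcal B$ at $\xi$ with $\mathcal B/\Gamma$ embedded (as an orbifold) in $\Hd/\Gamma$, and the ray $\{a_{-t}x\}$ lies in it for all large $t$, so the horosphere $Ux$ embeds there and the $U$-action on it is free and proper, i.e.\ $Ux$ is a wandering horosphere. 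As the conservative and dissipative parts of the $U$-action are $U$-invariant, $U$-ergodicity forces $\mu$ to be supported on a single wandering horosphere: case (1). In case (ii), the parabolic points of $\Gamma_0$ form a countable set, so $\{x=g\Gamma:\pi_-(g)\in\Lambda_{bp}\}$ is a countable disjoint union of $U$-invariant sets $Qg_0\Gamma$ with $\pi_-(g_0)\in\Lambda_{bp}$ and $Q=UMA$; by $U$-ergodicity $\mu$ is supported on one, which we identify with $Q/\Delta$ for $\Delta=Q\cap g_0\Gamma g_0^{-1}$. Since $\Delta$ is conjugate to $\mathrm{Stab}_\Gamma(\xi)\le\mathrm{Stab}_{\Gamma_0}(\xi)$, and the latter contains no hyperbolic element ($\xi$ being bounded parabolic for $\Gamma_0$), we get $\Delta\subseteq UM$, so the homomorphism $Q=UMA\to A$ descends to a $U$-invariant map $Q/\Delta\to A$. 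Preimages of intervals under it are $U$-invariant, so by ergodicity the push-forward of $\mu$ to $A$ is a point mass, and $\mu$ is supported on a single horosphere $Ua_0g_0\Gamma$, a lift of the horosphere in $\GmodGamma_0$ bounding the cusp at the image of $\xi$: case (2).

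The heart of the matter is the middle paragraph: a single return of the ray in the quotient $\GmodGamma_0$ need not be a return in $\GmodGamma$ --- the cover may be infinite precisely in the ``recurrent direction'' --- and it is not obvious how to turn it into a genuine element of $\asymneighborhood$. Normality $\Gamma\lhd\Gamma_0$ is exactly what resolves this: it absorbs the (a priori unbounded) elements $\gamma_n$, leaving a \emph{bounded} conjugation $\kappa_n\to\kappa$, whose limit witnesses a whole conjugate of $\Gamma$ inside $\asymneighborhood$. The remaining ingredients --- the ergodic reductions and the standard identifications of horospheres with base point outside $\Lambda$ (resp.\ at a bounded parabolic point) with wandering (resp.\ cusp-bounding) horospheres --- are routine.
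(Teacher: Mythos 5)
Your proof is correct and follows essentially the same route as the paper: the same identification $\Lambda_\Gamma=\Lambda_{\Gamma_0}$ and use of geometric finiteness to split into three $U$-invariant alternatives, the same normality computation $a_{-t_n}g\Gamma g^{-1}a_{t_n}=\kappa_n\Gamma\kappa_n^{-1}\to\kappa\Gamma\kappa^{-1}$ producing a Zariski dense subgroup of $\asymneighborhood$, and the same appeal to \cref{cor_MA-qi}; organizing it contrapositively (non-quasi-invariance forces a.e.\ projected ray to diverge) is only a cosmetic reshuffling. Your treatment of the bounded parabolic case is at least as detailed as the paper's one-line ergodicity argument, though note that controlling only the $A$-coordinate pins the support inside $UMa_0g_0\Gamma$ rather than a single $U$-orbit, which is anyway sufficient for conclusion (2).
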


In the special case where $\Gamma$ itself is geometrically finite one obtains (using Sullivan's theorem that there is only one atom-free $ \Gamma $-conformal measure supported on the limit set \cite{Sullivan1984}) that the Burger-Roblin measure on $ \GmodGamma $ for geometrically finite Zariski dense subgroups is the unique $U$-invariant non-periodic recurrent measure on $ \GmodGamma $. This case was shown by Winter \cite{Winter2015} (strengthening the analogous statement for $M\backslash\GmodGamma$ which was proved by Roblin~\cite{Roblin2003}; cf. also the proof by Schapira \cite{Schapira} as well as \cite[Thm.~5.7]{Mohammadi-Oh} by Mohammadi and Oh for another proof of this measure classification in $\GmodGamma$ in the geometrically finite context).

\begin{proof}[Proof of \cref{cor_GF_intro}]
	Denote by $ \Lambda_{\Gamma_0}^{par} $ and $ \Lambda_{\Gamma_0}^{con} $ the parabolic and conical limit points of $ \Lambda_{\Gamma_0} $. Note that $ \Lambda_{\Gamma_0}^{par} $ is countable. By \cref{def_GF}
	\begin{equation*}
	\Lambda_{\Gamma_0} = \Lambda_{\Gamma_0}^{par} \cup \Lambda_{\Gamma_0}^{con}. 
	\end{equation*}
	
	It is well known that whenever $ \{e\} \neq \Gamma \lhd \Gamma_0 $ then $ \Lambda_\Gamma = \Lambda_{\Gamma_0} $ (see e.g.~\cite[Theorem 12.2.14]{Ratcliffe_Book}). 
	We may thus conclude
	\begin{equation}\label{tech_limitptRCGF_classification}
	\Lambda_{\Gamma} = \Lambda_{\Gamma_0}^{par} \cup \Lambda_{\Gamma_0}^{con}. 
	\end{equation}
	
	The group $ G $ may be identified with the frame bundle of hyperbolic $ d $-space. Let $ \pi_- : G \to \partial \Hd $ denote the projection onto the boundary of hyperbolic space, where $ \pi_-(g) $ is defined as the limit of the geodesic trajectory $ \{a_{-t}g\}_{t \geq 0} $. 
	Note that $ \pi_-(ug)=\pi_-(g) $ for any $ g \in G $ and $ u \in U $. By (\ref{tech_limitptRCGF_classification}) we know
	\[ \partial\Hd = (\partial\Hd \smallsetminus \Lambda_\Gamma) \cupdot \Lambda_{\Gamma_0}^{par} \cupdot \Lambda_{\Gamma_0}^{con} \]
	This partition of $ \partial\Hd $ induces via $ \pi_{-}^{-1} $ a partition of $ G $ into $ U $-invariant sets. Ergodicity implies either:
	\begin{enumerate}[leftmargin=*]
		\item $ \mu $ is supported inside $ \pi_{-}^{-1}(\partial\Hd \smallsetminus \Lambda_\Gamma) / \Gamma $
		\item $ \mu $ is supported inside $ \pi_{-}^{-1}(\Lambda_{\Gamma_0}^{par}) / \Gamma $
		\item $ \mu $ is supported inside $ \pi_{-}^{-1}(\Lambda_{\Gamma_0}^{con}) / \Gamma $
	\end{enumerate}
	These three cases correspond respectively to the ones presented in the theorem. In case (1) the measure $ \mu $ is supported on a set of horospheres wandering into a funnel. Ergodicity implies that $ \mu $ is supported on a single wandering horosphere.
	
	For case (2), by ergodicity (and the fact that $\Lambda_{\Gamma_0}^{par}$ is a countable set) there is a single $ \Gamma $-orbit $ \xi.\Gamma \in \Lambda_{\Gamma_0}^{par} / \Gamma $ so that $\mu$ is supported inside $ \pi_{-}^{-1}(\xi.\Gamma) / \Gamma $. Applying ergodicity once again we see that $ p_*\mu $ must give full measure to a single horosphere bounding the cusp around $ \xi.\Gamma_0 $.
	
	Assume case (3), then for $ \mu $-a.e $ g\Gamma $, the projected geodesic ray 
	\[ p\left(\{a_{-t}g\Gamma\}_{t\geq 0}\right) = \{a_{-t}g\Gamma_0\}_{t\geq 0} \] 
	returns infinitely often to a compact set in $ \GmodGamma_0 $. Hence there exist sequences $ \gamma_n \in \Gamma_0 $ and $ t_n \to \infty $ for which $ a_{-t_n}g\gamma_n $ converges to some $ g_0 \in G $. Using the fact that $ \Gamma \lhd \Gamma_0 $ we deduce
	\[ a_{-t_n}g\Gamma g^{-1}a_{t_n}=a_{-t_n}g\left(\gamma_n \Gamma \gamma_n^{-1}\right) g^{-1}a_{t_n} \to g_0\Gamma g_0^{-1} \]
	and therefore 
	\[ g_0\Gamma g_0^{-1} \subseteq \asymneighborhood. \] 
	
	Note that $ Z\mbox{-}cl(\Gamma) $, the Zariski closure of $ \Gamma $, is invariant under conjugation by $ G=Z\mbox{-}cl(\Gamma_0) $. Therefore since $ G $ is a simple Lie group we deduce $ \Gamma $ and also $ g_0\Gamma g_0^{-1} $ are Zariski dense. \cref{cor_MA-qi} implies the measure $ \mu $ is $ MA $-quasi-invariant.
\end{proof}

\subsection{Bounded Injectivity Radius}\label{Subsection_injrad}

The injectivity radius at a point $ x $ in $ \GmodGamma $ is defined as
\[ \injrad(x) = \sup \{r : v_1x \neq v_2x \;\; \forall v_1,v_2 \in B_r^G \}. \]

\begin{cornm}[\ref{cor_injrad_intro}]
	Let $ \Gamma < G $ be any discrete group with injectivity radius uniformly bounded away from 0, i.e.~$ \inf_{x \in \GmodGamma} \injrad(x) > 0 $. Let $ \mu $ be any $ U $-e.i.r.m. Then one of the following holds:
	\begin{enumerate}[leftmargin=*]
		\item $ \lim_{t \to \infty} \injrad(a_{-t} x) = \infty $ for $ \mu $-a.e.~$ x $.
		\item $ H_\mu $ is cocompact in $ MA $, i.e.~ $ H_\mu $ contains a non-trivial hyperbolic element.
	\end{enumerate}
\end{cornm}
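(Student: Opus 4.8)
The plan is to deduce conclusion (2) from \cref{main_theorem} by showing that the failure of (1) forces $\asymneighborhood$ to contain a hyperbolic element. This is the only real content: once a hyperbolic $\gamma\in\asymneighborhood$ is found, recall that $\asymneighborhood$ is closed under inversion (each $a_{-t}g\Gamma g^{-1}a_t$ is a group), and that for a hyperbolic element at least one of $\alpha(\gamma),\alpha(\gamma^{-1})$ is non-trivial (see \S\ref{Section_reparam_map}); \cref{main_theorem} then places this non-trivial element in $H_\mu$, and by \eqref{tech_alpha_conjugacyrelation} it is conjugate to $\gamma^{\pm1}$, hence has the same positive translation length, hence is of the form $ma_s$ with $s\neq 0$. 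Since $M$ is compact, any closed subgroup of $MA$ containing such an $ma_s$ is cocompact (the quotient $MA/H_\mu$ is the continuous image of the compact set $M\times[0,|s|]$), which is exactly conclusion (2).

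So I would assume (1) fails. Then $\{x:\liminf_{t\to\infty}\injrad(a_{-t}x)<\infty\}$ has positive $\mu$-measure, so there is a finite $R_0$ with $\mu\{x:\liminf_{t\to\infty}\injrad(a_{-t}x)<R_0\}>0$; intersecting with the conull set where $\mathcal{S}_{x}=\asymneighborhood$, I fix such a point $x=g\Gamma$. For this $x$ there are $t_n\to\infty$ with $\injrad(a_{-t_n}x)<R_0$, and unwinding the definition of $\injrad$ this yields, for each $n$, a non-trivial $\delta_n\in a_{-t_n}g\Gamma g^{-1}a_{t_n}$ with $\|\delta_n\|_G\le 2R_0$ (triangle inequality, using right-invariance). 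Now the uniform lower bound enters: writing $\inf_x\injrad(x)=2\varepsilon_0>0$, the same unwinding shows $h\Gamma h^{-1}\cap B^G_{\varepsilon_0}=\{e\}$ for every $h\in G$; applied with $h=a_{-t_n}g$ this gives $\|\delta_n\|_G\ge\varepsilon_0$. Thus $(\delta_n)$ lies in the compact annulus $\overline{B^G_{2R_0}}\smallsetminus B^G_{\varepsilon_0}$, and after passing to a subsequence $\delta_n\to\delta_\infty$; since $t_n\to\infty$ and $\delta_n\in a_{-t_n}g\Gamma g^{-1}a_{t_n}$, the point $\delta_\infty$ lies in $\mathcal{S}_{x}=\asymneighborhood$, and $\delta_\infty\neq e$.

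It remains to check that $\delta_\infty$ is hyperbolic. The uniform bound first rules out parabolics in $\Gamma$: a parabolic element produces a cusp, and the injectivity radius is not bounded below along a ray entering that cusp. Hence every $\delta_n$, being conjugate to an element of $\Gamma$, is hyperbolic or elliptic, and if elliptic it has finite (in fact, by the same uniform bound, uniformly bounded) order. In the torsion-free case all $\delta_n$ are hyperbolic, and $\ell(\delta_n)$ equals the length of a closed geodesic in $\Hd/\Gamma$, which is $\ge 2\varepsilon_0$; continuity of translation length then gives $\ell(\delta_\infty)\ge 2\varepsilon_0>0$, so $\delta_\infty$ is hyperbolic and the argument is complete.

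The step I expect to be the genuine obstacle is the remaining possibility, in the presence of torsion, that infinitely many of the short loops $\delta_n$ are elliptic, so that $\delta_\infty$ could be elliptic and $\alpha(\delta_\infty)=e$ carries no information. I would handle this by observing that if the short loops along $\{a_{-t}x\}_{t\ge 0}$ are eventually all elliptic then the ray stays within bounded distance of the singular locus, and arguing that this makes the relevant boundary point a conical limit point of $\Gamma$; as in the proof of \cref{cor_GF_intro} one then obtains a full conjugate $g_0\Gamma g_0^{-1}\subseteq\asymneighborhood$, which contains a hyperbolic element because $\Gamma$ is infinite and parabolic-free. The other, more routine, points to be careful about are the bookkeeping with the right-invariant metric in the bounds $\varepsilon_0\le\|\delta_n\|_G\le 2R_0$ and the (elementary) fact that no appeal to ergodicity beyond $\mathcal{S}_{x}=\asymneighborhood$ a.e.\ is needed, since the event $\{\liminf\injrad(a_{-t}\cdot)<\infty\}$ has positive measure by the very negation of (1).
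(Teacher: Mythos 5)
Your main line is exactly the paper's argument: negate (1), extract along a $\mu$-typical backward ray a sequence of non-trivial elements $\delta_n\in a_{-t_n}g\Gamma g^{-1}a_{t_n}$ lying in a fixed compact set, pass to a non-trivial accumulation point $\delta_\infty\in\mathcal{S}_x=\asymneighborhood$, argue it is hyperbolic, and then feed it into \cref{main_theorem} via the facts from \S\ref{Section_reparam_map} (at least one of $\alpha(\delta_\infty),\alpha(\delta_\infty^{-1})$ is non-trivial and, by \eqref{tech_alpha_conjugacyrelation}, conjugate to $\delta_\infty^{\pm 1}$, hence of the form $ma_s$ with $s\neq0$), which gives cocompactness of $H_\mu$. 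Your use of a positive-measure set instead of the a.s.\ constancy of $\liminf_t\injrad(a_{-t}x)$ is fine, and the upper/lower bounds $\varepsilon_0\le\|\delta_n\|_G\le 2R_0$ and the semicontinuity/continuity of the translation length are handled correctly.

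The divergence is precisely the point you flag as the ``genuine obstacle'': elliptic elements. The paper does not need any such case analysis, because it observes (right before its proof) that the standing hypothesis is equivalent to $\Gamma$ being purely hyperbolic with $\inf\ell(\Gamma)>0$; granting that, every non-trivial accumulation point of the conjugates has translation length $\ge c>0$ and is automatically hyperbolic, and your torsion discussion evaporates. Your proposed patch for the torsion case, by contrast, does not work as sketched: knowing that the backward ray stays within bounded distance of the singular locus does not make the backward endpoint a conical limit point (the singular locus need not project to a compact subset of $\Hd/\Gamma$, and the argument in the proof of \cref{cor_GF_intro} really uses recurrence of the projected ray to a compact set to extract convergent conjugating elements $a_{-t_n}g\gamma_n$); nor is ``$\Gamma$ infinite'' available from the hypotheses, and indeed for an elementary group of rotations no hyperbolic element exists in $\asymneighborhood$ at all, so no argument of this shape can succeed there — the elliptic case has to be excluded by the hypothesis itself, as the paper does, not resolved dynamically. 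A related small inaccuracy in your torsion-free case: on the frame bundle, $\injrad\ge\varepsilon_0$ does not directly give closed geodesics of length $\ge 2\varepsilon_0$, since a short loxodromic with a large rotation part still displaces frames a definite amount; one gets a uniform positive lower bound on $\ell(\Gamma)$ only after passing to suitable powers (choosing a power whose rotation part in $M$ is small), which is all you need, but the constant is not $2\varepsilon_0$. So: same approach as the paper, complete once you replace the elliptic-case workaround by the observation that the hypothesis forces $\Gamma$ to be purely hyperbolic with length spectrum bounded away from zero.
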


\noindent
Note that the above conditions on $ \Gamma $ are equivalent to the group being purely-hyperbolic with length spectrum bounded away from 0, i.e.~$ \inf \ell(\Gamma) > 0 $.

\medskip

Whenever $ H_\mu $ is cocompact in $ MA $ there exists a character $ \chi : H_\mu \to \R $ satisfying
\[ \ell.\mu = \chi(\ell) \mu \quad \text{for all } \ell \in H_\mu. \]
This character may be uniquely extended to be defined on all of $ MA $. Indeed, $ \chi $ extends trivially to $ MH_\mu $ and extends linearly to $ MA $ using the fact that $ M \backslash MH_\mu $ is cocompact in $ M \backslash MA \cong \R $. The measure
\[ \lambda = \int_{{MA}/{H_\mu}} \chi(\ell^{-1})\ell.\mu\; dm_{{MA}/{H_\mu}}(\ell) \]
on $ \GmodGamma $ is Radon $ U $-invariant and $ MA $-quasi-invariant. Furthermore, by our construction $ ma.\lambda = \chi(ma) \lambda $ for any $ ma \in MA $. In other words, under these conditions the measure $ \mu $ is an ergodic component of a measure of the form $ e^{\beta t}d\nu dm dt du $ where $ \nu $ is a $ \Gamma $-conformal measure on $ S^{d-1} $.

\begin{proof}
	Whenever $ \injrad(g\Gamma)<R $ there exist $ v_1,v_2 \in B_R^G $ with $ v_1 \neq v_2 $ satisfying $ v_1 v_2^{-1} g\Gamma = g\Gamma $, implying
	\[ v_1 v_2^{-1} \in g\Gamma g^{-1} \cap B_{2R}^G. \]
	Note that the map
	\[ x \mapsto \liminf_{t \to \infty} \injrad(a_{-t} x) \]
	from $ \GmodGamma $ to $ (0,\infty] $ is $ U $-invariant and measurable and therefore constant $ \mu $-a.s.
	Assume that
	\[ \liminf_{t \to \infty} \injrad(a_{-t} x) < \infty \quad \mu \text{-a.s.}\]
	Then for $ \mu $-a.e.~$ g\Gamma \in \GmodGamma $ there exists $ R > 0 $ and sequences $ t_n \to \infty $ and $ \gamma_n \in G \smallsetminus \{e\} $ satisfying
	\[ \gamma_n \in a_{-t_n}g\Gamma g^{-1}a_{t_n} \cap B_{2R}^G. \]
	Let $ \gamma_0 \in B_{2R}^G $ be an accumulation point of the sequence $ \gamma_n $. Since the length of a hyperbolic element is preserved by conjugation, and since $ \inf \ell(\Gamma) = c > 0 $ we are ensured that $ \gamma_0 $ is hyperbolic with $ \ell(\gamma_0) \geq c $. Therefore $ \asymneighborhood $ contains a non-trivial hyperbolic element and \cref{main_theorem} implies $ H_\mu \smallsetminus M \neq \{e\} $, proving the claim.
\end{proof}

\end{document}